\documentclass[11pt]{amsart}

\usepackage{amsmath,amssymb,amscd,epsfig,graphicx}

\input{prepictex}
\input{pictex}
\input{postpictex}

\def\arr(#1,#2)(#3,#4){\arrow <0.15cm> [0.25, 0.75] from {#1} {#2} to {#3} {#4}}

\oddsidemargin=0in \evensidemargin=0in \textwidth=6.7in
\textheight=8.5in
\parindent=0.0in
\begin{document}
\theoremstyle{plain}
\newtheorem{thm}{Theorem}[section]
\newtheorem*{thm*}{Theorem}
\newtheorem{prop}[thm]{Proposition}
\newtheorem*{prop*}{Proposition}
\newtheorem{lemma}[thm]{Lemma}
\newtheorem{cor}[thm]{Corollary}
\newtheorem*{conj*}{Conjecture}
\newtheorem*{cor*}{Corollary}
\newtheorem{defn}[thm]{Definition}
\newtheorem{cond}{Condition}
\theoremstyle{definition}
\newtheorem*{defn*}{Definition}
\newtheorem{rems}[thm]{Remarks}
\newtheorem*{rems*}{Remarks}
\newtheorem*{proof*}{Proof}
\newtheorem*{not*}{Notation}
\numberwithin{equation}{section}

\newcommand{\npartial}{\slash\!\!\!\partial}
\newcommand{\Heis}{\operatorname{Heis}}
\newcommand{\Solv}{\operatorname{Solv}}
\newcommand{\Spin}{\operatorname{Spin}}
\newcommand{\SO}{\operatorname{SO}}
\newcommand{\ind}{\operatorname{ind}}
\newcommand{\Index}{\operatorname{index}}
\newcommand{\ch}{\operatorname{ch}}
\newcommand{\rank}{\operatorname{rank}}
\newcommand{\abs}[1]{\lvert#1\rvert}
 \newcommand{\A}{{\mathcal A}}
        \newcommand{\D}{{\mathcal D}}\newcommand{\HH}{{\mathcal H}}
        \newcommand{\LL}{{\mathcal L}}
        \newcommand{\B}{{\mathcal B}}
        \newcommand{\K}{{\mathcal K}}
\newcommand{\oo}{{\mathcal O}}
         \newcommand{\PP}{{\mathcal P}}
        \newcommand{\s}{\sigma}
\newcommand{\al}{\alpha}
        \newcommand{\coker}{{\mbox coker}}
        \newcommand{\p}{\partial}
        \newcommand{\dd}{|\D|}
        \newcommand{\n}{\parallel}
\newcommand{\bma}{\left(\begin{array}{cc}}
\newcommand{\ema}{\end{array}\right)}
\newcommand{\bca}{\left(\begin{array}{c}}
\newcommand{\eca}{\end{array}\right)}
\def\clsp{\overline{\operatorname{span}}}
\def\T{\mathbb T}
\def\Aut{\operatorname{Aut}}

\def\sign{\operatorname{sign}}

\newcommand\RR{{\mathcal R}}
\newcommand{\sr}{\stackrel}
\newcommand{\da}{\downarrow}
\newcommand{\tD}{\tilde{\D}}

        \newcommand{\R}{\mathbb R}
        \newcommand{\C}{\mathbb C}
        \newcommand{\h}{\mathbf H}
\newcommand{\Z}{\mathbf Z}
\newcommand{\N}{\mathbf N}
\newcommand{\tto}{\longrightarrow}
\newcommand{\ben}{\begin{displaymath}}
        \newcommand{\een}{\end{displaymath}}
\newcommand{\be}{\begin{equation}}
\newcommand{\ee}{\end{equation}}

        \newcommand{\bean}{\begin{eqnarray*}}
        \newcommand{\eean}{\end{eqnarray*}}
\newcommand{\nno}{\nonumber\\}
\newcommand{\bea}{\begin{eqnarray}}
        \newcommand{\eea}{\end{eqnarray}}

\def\cross#1{\rlap{\hskip#1pt\hbox{$-$}}}
        \def\intcross{\cross{0.3}\int}
        \def\bigintcross{\cross{2.3}\int}

\newcommand{\supp}[1]{\operatorname{#1}}
\newcommand{\norm}[1]{\parallel\, #1\, \parallel}
\newcommand{\ip}[2]{\langle #1,#2\rangle}
\setlength{\parskip}{.3cm}
\newcommand{\nc}{\newcommand}
\nc{\nt}{\newtheorem} \nc{\gf}[2]{\genfrac{}{}{0pt}{}{#1}{#2}}
\nc{\mb}[1]{{\mbox{$ #1 $}}} \nc{\real}{{\mathbb R}}
\nc{\comp}{{\mathbb C}} \nc{\ints}{{\mathbb Z}}
\nc{\Ltoo}{\mb{L^2({\mathbf H})}} \nc{\rtoo}{\mb{{\mathbf R}^2}}
\nc{\slr}{{\mathbf {SL}}(2,\real)} \nc{\slz}{{\mathbf
{SL}}(2,\ints)} \nc{\su}{{\mathbf {SU}}(1,1)} \nc{\so}{{\mathbf
{SO}}} \nc{\hyp}{{\mathbb H}} \nc{\disc}{{\mathbf D}}
\nc{\torus}{{\mathbb T}}
\newcommand{\tk}{\widetilde{K}}
\newcommand{\boe}{{\bf e}}\newcommand{\bt}{{\bf t}}
\newcommand{\vth}{\vartheta}
\newcommand{\CGh}{\widetilde{\CG}}
\newcommand{\db}{\overline{\partial}}
\newcommand{\tE}{\widetilde{E}}
\newcommand{\tr}{\mbox{tr}}
\newcommand{\ta}{\widetilde{\alpha}}
\newcommand{\tb}{\widetilde{\beta}}
\newcommand{\txi}{\widetilde{\xi}}
\newcommand{\hV}{\hat{V}}
\newcommand{\IC}{\mathbf{C}}
\newcommand{\IZ}{\mathbf{Z}}
\newcommand{\IP}{\mathbf{P}}
\newcommand{\IR}{\mathbf{R}}
\newcommand{\IH}{\mathbf{H}}
\newcommand{\IG}{\mathbf{G}}
\newcommand{\CC}{{\mathcal C}}
\newcommand{\CS}{{\mathcal S}}
\newcommand{\CG}{{\mathcal G}}
\newcommand{\CL}{{\mathcal L}}
\newcommand{\CO}{{\mathcal O}}
\nc{\ca}{{\mathcal A}} \nc{\cag}{{{\mathcal A}^\Gamma}}
\nc{\cg}{{\mathcal G}} \nc{\chh}{{\mathcal H}} \nc{\ck}{{\mathcal
B}} \nc{\cl}{{\mathcal L}} \nc{\cm}{{\mathcal M}}
\nc{\cn}{{\mathcal N}} \nc{\cs}{{\mathcal S}} \nc{\cz}{{\mathcal
Z}} \nc{\cM}{{\mathcal M}}
\nc{\sind}{\sigma{\rm -ind}}
\newcommand{\la}{\langle}
\newcommand{\ra}{\rangle}
\newcommand{\cT}{{\mathcal T}}
\newcommand{\cC}{{\mathcal C}}
\newcommand{\cA}{{\mathcal A}}
\newcommand{\cH}{{\mathcal H}}
\newcommand{\cO}{{\mathcal O}}
\newcommand{\cK}{{\mathcal K}}
\newcommand{\Hom}{{\rm Hom}}
\renewcommand{\H}{{\mathbb H}}

\renewcommand{\labelitemi}{{}} 

\def\O{{\mathcal O}}
\def\m{{\mathfrak m}}
\def\P{{\mathbb P}}
\def\Q{{\mathbb Q}}
\def\C{{\mathbb C}}
\def\Z{{\mathbb Z}}
\def\N{{\mathbb N}}
\def\PGL{{\rm PGL}}
\def\PSL{{\rm PSL}}
\def\GL{{\rm GL}}
\newcommand{\ie}{{\it i.e.\/}\ }
\newcommand{\eg}{{\it e.g.\/}\ }
\newcommand{\cf}{{\it cf.\/}\ }

 \title{Modular Index Invariants of Mumford Curves}
      \author{A. Carey, M. Marcolli, A. Rennie}
\email{}

\begin{abstract}
We continue an investigation initiated by Consani--Marcolli of the relation 
between the algebraic geometry of $p$-adic Mumford curves and the 
noncommutative geometry of graph $C^*$-algebras associated to the 
action of the uniformizing $p$-adic Schottky group on the Bruhat--Tits tree.
We reconstruct invariants of Mumford curves related to valuations of generators of
the associated Schottky group, by developing a graphical theory for KMS weights 
on the associated graph $C^*$-algebra, and using modular index theory for 
KMS weights. We give explicit examples of the construction of graph weights
for low genus Mumford curves. We then show that the theta functions of
Mumford curves, and the induced currents on the Bruhat--Tits tree, define
functions that generalize the graph weights. We show that such inhomogeneous
graph weights can be constructed from spectral flows, and that one can reconstruct
theta functions from such graphical data. 
\end{abstract}

\maketitle

\section{Introduction}

Mumford curves generalize the Tate uniformization of elliptic curves and provide $p$-adic 
analogues of the uniformization of Riemann surfaces, \cite{Mum}.
The type of $p$-adic uniformization considered for these curves is a close analogue 
of the Schottky uniformization of complex Riemann 
surfaces, where instead of a Schottky group $\Gamma \subset \PSL_2(\C)$ acting on the Riemann sphere $\P^1(\C)$, one has a $p$-adic Schottky group acting on the boundary of the Bruhat--Tits tree
and on the Drinfeld $p$-adic upper half plane. 

The analogy between Mumford curves and Schottky uniformization of Riemann surfaces was
a key ingredient in the results of Manin on Green functions of Arakelov geometry in terms of
hyperbolic geometry \cite{Man}, motivated by the analogy with earlier results of Drinfeld--Manin for the case of $p$-adic Schottky groups \cite{DriMan}. Manin's computation of the Green function for a 
Schottky-uniformized Riemann surface in terms of geodesics lengths in the hyperbolic handlebody
uniformized by the same Schottky group provides a geometric interpretation of the 
missing ``fibre
at infinity" in Arakelov geometry in terms the tangle of bounded geodesics 
inside the hyperbolic
3-manifold. In order to make this result compatible with Deninger's description 
of the Gamma factors of $L$-functions as regularized determinants and with 
Consani's archimedean cohomology \cite{Cons}, this 
formulation of Manin was reinterpreted in terms of noncommutative 
geometry by Consani--Marcolli \cite{CM}. In particular, the model proposed in
\cite{CM} for the ``fibre at infinity" uses a noncommutative space which describes 
the action of
the Schottky group $\Gamma$ on its limit set  $\Lambda_\Gamma \subset \P^1(\C)$ 
via the crossed product $C^*$-algebra $C(\Lambda_\Gamma)\rtimes \Gamma$. 
This is a particular case of a 
Cuntz--Krieger algebra given by the graph $C^*$-algebra of the finite graph 
$\Delta_\Gamma/\Gamma$,  with $\Delta_\Gamma$ the Cayley graph of $\Gamma\simeq \Z^{*g}$.

Following the same analogy between Schottky uniformization of Riemann surfaces and $p$-adic
uniformization of Mumford curves, Consani and Marcolli extended their construction \cite{CM}
to the case of Mumford curves, \cite{CM1,CM2}. 
More interesting graph $C^*$-algebras appear
in the $p$-adic case than in the archimedean setting, 
namely the ones associated to the graph $\Delta_\Gamma/\Gamma$, 
which is the dual graph of the specialization of the 
Mumford curve and to $\Delta'_\Gamma/\Gamma$, which 
is the dual graph of the closed fibre of the minimal smooth model
of the curve.  After these results of Consani--Marcolli, the construction 
was further refined in \cite{CMRV} and extended to some classes of higher rank 
buildings generalizing the rank-one case of Schottky groups acting on Bruhat--Tits trees. 
The relation between Schottky uniformizations,
noncommutative geometry, and graph $C^*$-algebras was further analyzed in 
\cite{CLM,CorMa}. 

The main question in this approach is how much of the 
algebraic geometry of Mumford curves can be 
recovered by means of the noncommutative geometry of certain $C^*$-algebras
associated to the action of the Schottky group on the Bruhat--Tits tree, on its
limit set, and on the Drinfeld upper half plane, and conversely how much the noncommutative
geometry is determined by algebro-geometric information coming from the Mumford
curve. 

In this paper we analyze another aspect of this question, based on recent results on circle
actions on graph $C^*$-algebras and associated  KMS states and modular index theory,
\cite{CNNR}. In particular, we first show how numerical information like the Schottky invariants
given by the translation lengths of a given set of generators of the Schottky group can be 
recovered from the modular index invariants of the graph $C^*$-algebra determined by the
action of the $p$-adic Schottky group on the Bruhat--Tits tree. We then analyze the relation
between graph weights for this same graph $C^*$-algebra and theta functions of the Mumford 
curve. Unlike the previous results on Mumford curves and noncommutative geometry, which
concentrated on the use of the graph $C^*$-algebra associated to the {\em finite} graph
$\Delta'_\Gamma/\Gamma$  or $\Delta_\Gamma/\Gamma$, here we use the full infinite graph 
$\Delta_{\mathbb K}/\Gamma$, where $\Delta_{\mathbb K}$ is the Bruhat--Tits tree, with boundary at infinity $\partial \Delta_{\mathbb K}/\Gamma= X_\Gamma({\mathbb K})$, the ${\mathbb K}$-points of the Mumford curve, with ${\mathbb K}$ a finite extension of $\Q_p$. The fact of working with an infinite graph requires a more subtle analysis of the modular index theory and a setting for the graph weights, where the main information is located inside the finite graph $\Delta'_\Gamma/\Gamma$ and is propagated along the infinite trees in $\Delta_{\mathbb K}/\Gamma$ attached to the vertices of $\Delta'_\Gamma/\Gamma$, towards the conformal boundary $X_\Gamma({\mathbb K})$. The graph weights are solutions of a combinatorial equation at the vertices of the graph, which can be thought of as governing a momentum flow through the graph. We prove that for graphs such as $\Delta_{\mathbb K}/\Gamma$ faithful graph weights are the same as gauge invariant, norm lower semicontinuous faithful semifinite functionals on the graph $C^*$-algebra. This is the basis for constructing KMS states associated to graph weights, which
are then used to compute modular index invariants for these type III geometries.

The theta functions of Mumford curves in turn can be described as in \cite{vdP} in terms of $\Gamma$-invariant currents on the Bruhat--Tits tree $\Delta_{\mathbb K}$ and corresponding signed measures of total mass zero on the boundary. We show that this description of theta functions leads to an inhomogeneous version of the equation defining graph weights, or equivalently to a homogeneous version, but where the weights are allowed to have a sign instead of being positive and are also required to be integer valued. We also show that there is an isomorphism between the abelian group of $\Gamma$-invariant
currents on the Bruhat-Tits tree and linear functionals on the $K_0$ of the graph $C^*$-algebra of the
quotient graph $C^*(\Delta_{\mathbb K}/\Gamma)$. This implies that theta functions of the Mumford curve define
functionals on the $K$-theory of the graph algebra, with the only ambiguity given by the action of ${\mathbb K}^*$.

Finally, we discuss how to use the spectral flow to construct solutions of the
inhomogeneous graph weights equations and how to use these to construct
theta functions in case where one has more than one (positive) graph weight
on $\Delta_{\mathbb K}/\Gamma$.

It would be interesting, in a similar manner, to explore how other invariants associated to
type III noncommutative geometries, such as the approach followed by Connes--Moscovici in
\cite{CoMo} and by Moscovici in \cite{Mosc}, may relate to the algebraic geometry of 
Mumford curves in the specific case of the algebras $C^*(\Delta_{\mathbb K}/\Gamma)$.

\section{Mumford Curves}

We recall here some well known facts from the theory of Mumford
curves, which we need in the rest of the paper. The results
mentioned in this brief introduction can be found for instance in
\cite{Mum}, \cite{Ma}, \cite{GvP} and were also reviewed in more detail in \cite{CM1}.

\subsection{The Bruhat--Tits tree}

Let ${\mathbb K}$ denote a finite extension of $\Q_p$ and let
$\O=\O_{\mathbb K}\subset {\mathbb K}$ be its ring of integers, with $\m\subset \O$ the
maximal ideal. The finite field $k = \O/\m$ of cardinality $q = \# \O/\m$ is
called the residue field. 

Let $\Delta^0_{\mathbb K}$ denote the set of equivalence classes of free rank 2
$\O$-modules, with the equivalence relation 
$$ M_1 \sim M_2  \Leftrightarrow \exists \lambda\in {\mathbb K}^*, \ \  M_1 =
\lambda M_2. $$ 
The group $\GL_2({\mathbb K})$ acts on $\Delta^0_{\mathbb K}$ by $gM =
\{gm~|~m\in M\}$. This descends to an action of $\PGL_2({\mathbb K})$, since
$M_1\sim M_2$ for $M_1$ and $M_2$ in the same ${\mathbb K}^*$-orbit.
Given $M_2\subset M_1$, one has
$M_1/M_2 \simeq \O/\m^l \oplus \O/\m^k$,
for some $l,k\in \N$. The action of
${\mathbb K}^*$ preserves the inclusion $M_2\subset M_1$, hence
one has a well defined metric 
\begin{equation}\label{dist}
d(M_1,M_2) = | l-k |.
\end{equation}
The Bruhat--Tits tree of $\PGL_2({\mathbb K})$ is the infinite graph with set
of vertices $\Delta^0_{\mathbb K}$, and an edge connecting two vertices $M_1$,
$M_2$ whenever $d(M_1,M_2) = 1$. It is an infinite tree with
vertices of valence $q+1$ where $q = \# \O/\m$. The group
$\PGL_2({\mathbb K})$ acts on $\Delta_{\mathbb K}$ by isometries. The boundary $\partial
\Delta_{\mathbb K}$ is naturally identified with $\P^1({\mathbb K})$. 

\subsection{$p$-adic Schottky groups and Mumford curves}\label{padicGamma}

A Schottky group $\Gamma$ is a finitely generated, discrete, torsion-free
subgroup of $\PGL_2({\mathbb K})$ 
whose nontrivial elements $\gamma\neq 1$ are all {\it
hyperbolic}, \ie such that the eigenvalues of $\gamma$ in ${\mathbb K}$ have
different valuation. The group $\Gamma$ acts freely on the tree $\Delta_{\mathbb K}$. 
Hyperbolic elements $\gamma$ have two fixed points $z^\pm(\gamma)$ 
on the boundary $\P^1({\mathbb K})$. 
For an element $\gamma\neq 1$ in $\Gamma$ the {\em axis} $L(\gamma)$ 
of $\gamma$ is the unique geodesic in the Bruhat--Tits tree $\Delta_{\mathbb K}$ 
with endpoints the two fixed points $z^\pm(\gamma) \in 
\P^1({\mathbb K})=\partial \Delta_{\mathbb K}$.

Let $\Lambda_\Gamma\subset \P^1({\mathbb K})$ be the closure  in $\P^1({\mathbb K})$
of the set of fixed points of the elements $\gamma\in\Gamma\setminus\{1\}$. 
This is called the {\em limit set} of $\Gamma$. 
Only in the case of $\Gamma = (\gamma)^\Z\simeq \Z$, with a single hyperbolic generator 
$\gamma$, one has $\# \Lambda_\Gamma < \infty$. This special case, as we
see below, correponds to Mumford curves of genus one. In general,
for higher genus (higher rank Schottky groups), the limit set is
uncountable (typically a fractal). The {\it domain of discontinuity}
for the Schottky group $\Gamma$ is the complement $\Omega_\Gamma({\mathbb K})
= \P^1({\mathbb K})\smallsetminus\Lambda_\Gamma$. The quotient $X_\Gamma :=
\Omega_\Gamma /\Gamma$ gives the analytic model (via uniformization) 
of an algebraic curve $X$ defined over ${\mathbb K}$ (\cf~\cite{Mum} p.~163).

For a $p$-adic Schottky group $\Gamma \subset \PGL(2,{\mathbb K})$ there is a smallest
subtree $\Delta'_\Gamma \subset \Delta_{\mathbb K}$ that contains the axes $L(\gamma)$ of all
the elements $\gamma\neq 1$ of $\Gamma$. The set of ends of $\Delta'_\Gamma$ in
$\P^1({\mathbb K})$ is the limit set $\Lambda_\Gamma$ of $\Gamma$. The group
$\Gamma$ acts on $\Delta'_\Gamma$ with quotient
$\Delta'_\Gamma /\Gamma$ a finite graph. There is also a smallest
tree $\Delta_\Gamma$ on which $\Gamma$ acts, with vertices a subset of vertices of the
Bruhat--Tits tree. The tree
$\Delta'_\Gamma$ contains extra vertices with respect to $\Delta_\Gamma$. 
These come from vertices of $\Delta_{\mathbb K}$ that are not vertices of $\Delta_\Gamma$, but
which lie on paths in $\Delta_\Gamma$. ($\Delta_\Gamma$ is not a subtree of $\Delta_{\mathbb K}$,
while $\Delta_\Gamma'$ is.) 
The quotient $\Delta_\Gamma/ \Gamma$ is also a finite graph. Both the graphs
$\Delta'_\Gamma /\Gamma$ and $\Delta_\Gamma/ \Gamma$ have
algebro-geometric significance: 
$\Delta'_\Gamma/\Gamma$ is the dual graph of the closed fibre of 
the minimal smooth model of the algebraic curve $X$ over ${\mathbb K}$;
$\Delta_\Gamma/\Gamma$ is the dual graph of the specialization of the curve $X$.
The latter is a $k$-split degenerate, stable curve, with $k$ the residue field of ${\mathbb K}$.

The set of ${\mathbb K}$-points $X_\Gamma({\mathbb K})$ of the Mumford curve is 
identified with the 
ends of the graph $\Delta_{\mathbb K}/\Gamma$. With a slight abuse of notation we sometimes 
say that $X_\Gamma$ is the boundary of $\Delta_{\mathbb K}/\Gamma$ and write
\begin{equation}\label{Mcurve}
 \partial \Delta_{\mathbb K}/\Gamma =X_\Gamma.
\end{equation}
The graph $\Delta_{\mathbb K}/\Gamma$ contains the finite subgraph
$\Delta'_\Gamma /\Gamma$. Infinite trees depart from the vertices of
the subgraph $\Delta'_\Gamma /\Gamma$ with ends on the boundary at infinity
$X_\Gamma$. We
assume that the base point $v$ belongs to $\Delta'_\Gamma$ so that
all these trees are oriented outward from the finite graph
$\Delta'_\Gamma /\Gamma$. All the nontrivial topology resides in the
graph $\Delta'_\Gamma /\Gamma$ from which one can read off
the genus of the curve.

So far, the use of methods of noncommutative geometry in the context
of Mumford curves and Schottky uniformization (\cf \cite{CM},
\cite{CM1}, \cite{CM2}, \cite{CLM}, \cite{CMRV}) concentrated on the
finite graphs $\Delta'_\Gamma /\Gamma$ and $\Delta_\Gamma/ \Gamma$
and noncommutative spaces associated to the dynamics of the action
of the Schottky group on its limit set. Here we consider the full
infinite graph $\Delta_{\mathbb K}/\Gamma$ of \eqref{Mcurve}. In fact, we
will show that it is precisely the presence in $\Delta_{\mathbb K}/\Gamma$ of
the infinite trees attached to the vertices of the finite subgraph
$\Delta'_\Gamma /\Gamma$ that makes it possible to construct
interesting KMS states on the associated graph $C^*$-algebra and
hence to apply the techniques of modular index theory to obtain new
invariants of a $K$-theoretic nature for Mumford curves.

\subsection{Directed graphs and their algebras}

There are different ways to introduce a structure of directed graph 
on the finite graphs $\Delta_\Gamma/\Gamma$, $\Delta_\Gamma'/\Gamma$
and on the infinite graph $\Delta_{\mathbb K}/\Gamma$. 

One possibility, considered for instance in \cite{CLM}, is not to
prescribe an orientation on the graphs. This means that one keeps for
each edge the choice of both possible orientations. The associated
directed graph has then double the number of edges to account for the two
possible orientations. This approach has the problem that it makes the
graph $C^*$-algebras more complicated and the combinatorics
correspondingly more involved than strictly necessary, so we will not
follow it here.

Another way to make the graphs of Mumford curves into directed graphs
is by the choice of a projective coordinate $z\in \P^1({\mathbb K})$ (\cf
\cite{CM1}, \cite{CM2}). The choice of the coordinate $z$ determines
uniquely a base point $v\in \Delta_{\mathbb K}^0$, given by the 
origin of three non-overlapping paths with ends the points $0$, $1$
and $\infty$ in $\P^1({\mathbb K})$. The choice of $v$ gives an orientation to
the tree $\Delta_{\mathbb K}$ given by the outward direction from $v$. This
gives an induced orientation to any fundamental domains of the
$\Gamma$-action in $\Delta_{\mathbb K}$, $\Delta_\Gamma'$ and $\Delta_\Gamma$
containing the base vertex $v$, which one can use to obtain all the
possible induced orientations on the quotient graphs.  

There is still another possibility of orienting the tree $\Delta_{\mathbb K}$ in
a way that is adapted to the action of $\Gamma$, and this is the one
we adopt here. It is described in Lemma \ref{defor} below.

Suppose we are given the choice of a
projective coordinate $z\in \P^1({\mathbb K})$ and assume that the corresponding
vertex $v$ is in fact a vertex of $\Delta_\Gamma'$. Let $\{ \gamma_1,
\ldots, \gamma_g \}$ be a set of generators for $\Gamma$. 
An orientation of $\Gamma\backslash \Delta_{\mathbb K}$ is then obtained from
a $\Gamma$-invariant orientation of $\Delta_{\mathbb K}$ as follows.

\begin{lemma}\label{defor}
Consider the chain of edges in $\Delta_{\mathbb K}$ connecting the base vertex
$v$ to $\gamma_i v$. Then there is a choice of orientation of these
edges that induces an orientation on the quotient graph and that
extends to a $\Gamma$-invariant orientation of $\Delta_{\mathbb K}$.  
\end{lemma}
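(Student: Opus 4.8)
The plan is to separate the statement into a formal part—there is a bijection between $\Gamma$-invariant orientations of $\Delta_{\mathbb K}$ and orientations of the quotient graph—and a geometric part—the chains $C_i$ from $v$ to $\gamma_i v$ lie in the tree in a way controlled by a single fundamental domain, so a consistent orientation can be pinned down on them. First I would record the reduction. Since $\Gamma$ is torsion free it admits no inversions on the tree, so $\Gamma$ acts freely on the geometric edge set and $\pi\colon\Delta_{\mathbb K}\to\Delta_{\mathbb K}/\Gamma$ is a covering of graphs; hence pull-back along $\pi$ is a bijection between orientations of $\Delta_{\mathbb K}/\Gamma$ and $\Gamma$-invariant orientations of $\Delta_{\mathbb K}$. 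Writing $C_i$ for the geodesic from $v$ to $\gamma_i v$, it therefore suffices to orient the edges of $\bigcup_{i=1}^g C_i$ in a way that is $\Gamma$-consistent on that set (so that it descends to a well-defined partial orientation of $\Delta_{\mathbb K}/\Gamma$), then extend this partial orientation arbitrarily over the remaining edges of $\Delta_{\mathbb K}/\Gamma$, and finally pull back; the restriction of the resulting $\Gamma$-invariant orientation to $\bigcup_i C_i$ is the orientation of ``these edges'' asserted by the lemma. The only point needing an argument is thus the $\Gamma$-consistency on $\bigcup_i C_i$.

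For that I would invoke the ping-pong (``Schottky figure'') description of the action: for the prescribed free basis $\{\gamma_1,\dots,\gamma_g\}$ there is a convex fundamental domain $\mathcal F\subset\Delta_{\mathbb K}$ containing $v$ whose $\Gamma$-translates tile $\Delta_{\mathbb K}$ along the pattern of the Cayley graph of $\Gamma$, so that $\mathcal F$ is adjacent to $\gamma_i\mathcal F$ for every $i$ (\cf~\cite{GvP}, \cite{Mum}). If $p_i\in L(\gamma_i)$ denotes the vertex of the axis nearest to $v$, then $\gamma_i p_i$ is the one nearest to $\gamma_i v$ and $C_i=[v,p_i]\cup[p_i,\gamma_i p_i]\cup[\gamma_i p_i,\gamma_i v]$, with $[\gamma_i p_i,\gamma_i v]=\gamma_i[p_i,v]$ and $[p_i,\gamma_i p_i]\subset L(\gamma_i)$ of length the translation length of $\gamma_i$ (the outer pieces are empty when $v\in L(\gamma_i)$). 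Convexity of $\mathcal F$ gives $[v,p_i]\subset\mathcal F$, hence $\gamma_i[p_i,v]\subset\gamma_i\mathcal F$, and since $\mathcal F\cup\gamma_i\mathcal F$ is convex the middle piece lies in it too; thus $C_i\subset\mathcal F\cup\gamma_i\mathcal F$ and $\bigcup_i C_i\subset\mathcal F\cup\bigcup_i\gamma_i\mathcal F$.

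Now I would choose an orientation of the edges of $\mathcal F$ subject only to the proviso that the two walls of $\mathcal F$ identified by each $\gamma_i$ are oriented compatibly (equivalently, pick an orientation of the finite part of $\Delta_{\mathbb K}/\Gamma$), and propagate it $\Gamma$-invariantly. Its restriction to $\bigcup_i C_i$ is then $\Gamma$-consistent: on each $C_i$ the part inside $\mathcal F$ carries the chosen orientation, the part inside $\gamma_i\mathcal F$ carries its $\gamma_i$-image, the shared wall is handled by the proviso, and two distinct translates $\gamma_i\mathcal F$, $\gamma_j\mathcal F$ with $i\neq j$ share no edge (their Cayley-graph distance is $2$). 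Finally, the countably many infinite trees of $\Delta_{\mathbb K}/\Gamma$ attached to the finite part may be oriented arbitrarily, and since $\Gamma$ permutes the corresponding trees of $\Delta_{\mathbb K}$ freely with trivial stabiliser this lifts $\Gamma$-invariantly with no further constraint; this completes a $\Gamma$-invariant orientation of $\Delta_{\mathbb K}$ whose restriction to the chains is the required one.

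The step that carries the weight is the structural input used above—namely that, for the prescribed generating set (which is automatically a free basis of $\Gamma\cong\Z^{*g}$), there is a fundamental domain $\mathcal F$ with $\gamma_i\mathcal F$ adjacent to $\mathcal F$, forcing each $C_i$ to be ``short'' enough to be governed by a single edge-pairing; once this is granted, everything else is bookkeeping for a free action on a tree. It remains only to check the degenerate case $g=1$ (where $\mathcal F$ is a single edge of the axis and $C_1$ is a $\gamma_1$-translate of it) and to note that the proviso on the walls is exactly the freedom of choosing an orientation downstairs, both of which are immediate.
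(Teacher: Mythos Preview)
Your proof is correct and takes a genuinely different route from the paper's. You reduce everything to the formal observation that $\Gamma$, being torsion free, acts without inversions, so that pull-back along $\pi$ is a bijection between orientations of $\Delta_{\mathbb K}/\Gamma$ and $\Gamma$-invariant orientations of $\Delta_{\mathbb K}$; once this is said, any orientation downstairs pulls back and restricts to the chains, and existence is immediate. Your subsequent detour through the Schottky fundamental domain $\mathcal F$ and the axis decomposition $C_i=[v,p_i]\cup[p_i,\gamma_ip_i]\cup\gamma_i[p_i,v]$ is correct but not actually needed for the lemma as stated---it localises the chains but the bijection alone already does the job. The paper, by contrast, is entirely constructive: it orients each $C_i$ from $v$ toward $\gamma_i v$, resolving collisions (edges of $C_i$ in the same $\Gamma$-orbit) by giving priority to the first occurrence along the chain, then propagates through $\Delta_\Gamma'$ by iterating with the chains from $\gamma_i^{\pm1}v$ to $\gamma_j\gamma_i^{\pm1}v$, and finally orients all of $\Delta_{\mathbb K}\setminus\Delta_\Gamma'$ away from $\Delta_\Gamma'$. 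The payoff of the paper's approach is that it singles out a \emph{specific} orientation---precisely the one used later to define the zhyvot $\Delta_\Gamma'/\Gamma$ and its action, with the infinite trees pointing outward---whereas your argument yields only existence and leaves the orientation on the outer trees arbitrary. One small correction: your parenthetical on the case $g=1$ is off; the fundamental domain on the axis consists of $\ell(\gamma_1)$ edges, not one, and $C_1=[v,\gamma_1 v]$ is (up to boundary conventions) $\mathcal F$ itself rather than a $\gamma_1$-translate of it.
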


\proof Consider the chain of edges between $v$ and $\gamma_i v$. 
If all of them have distinct images in the quotient graph orient 
them all in the 
direction away from $v$ and towards $\gamma_i v$. If there is more
than one edge in the path from $v$ to $\gamma_i v$ that maps to the
same edge in the quotient graph, orient the first one that occurs from
$v$ to $\gamma_i v$ and the others consistently with the induced
orientation of the corresponding edge in the quotient graph. 
Similarly, orient the edges between $v$ and $\gamma^{-1}_i v$ in the
direction pointing towards $v$, with the same caveat for edges with
the same image in the quotient. Propagate this orientation across the
tree $\Delta_\Gamma'$ by repeating the same procedure with the edges
between $\gamma_i^{\pm 1} v$ and $\gamma_j\gamma_i^{\pm 1} v$ and 
between $\gamma_i^{\pm 1} v$ and $\gamma_j^{-1}\gamma_i^{\pm 1} v$ 
and so on. Continuing in this way, one obtains an
orientation of the tree $\Delta_\Gamma'$ compatible with the induced
orientation on the quotient graph $\Delta_\Gamma'/\Gamma$. One then
orients the rest of the tree $\Delta_{\mathbb K}$ away from the subtree
$\Delta_\Gamma'$. 
\endproof

An example of the orientations obtained in this way on the tree
$\Delta_\Gamma$ and on the quotient graph for 
the genus two case is given in Figures \ref{trees1}, \ref{trees2},
\ref{trees3} below.  

\subsection{Graph algebras for Mumford curves}

For a more detailed introduction to graph
$C^*$-algebras we refer the reader to \cite{BPRS,kpr,R} and the
references therein. A directed graph $E=(E^0,E^1,r,s)$ consists of
countable sets $E^0$ of vertices and $E^1$ of edges, and maps
$r,s:E^1\to E^0$ identifying the range and source of each edge.
{\em We will always assume that the graph is row-finite},
which means that each vertex emits at most finitely many edges.
Later we will also assume that the graph is \emph{locally finite}
which means  it is row-finite and each vertex receives at most
finitely many edges. We write $E^n$ for the set of paths
$\mu=\mu_1\mu_2\cdots\mu_n$ of length $|\mu|:=n$; that is,
sequences of edges $\mu_i$ such that $r(\mu_i)=s(\mu_{i+1})$ for
$1\leq i<n$.  The maps $r,s$ extend to $E^*:=\bigcup_{n\ge 0} E^n$
in an obvious way. A \emph{loop} in $E$ is a path $L \in E^*$ with
$s ( L ) = r ( L )$, we say that a loop $L$ has an exit if there
is $v = s ( L_i )$ for some $i$ which emits more than one edge. If
$V \subseteq E^0$ then we write $V \ge w$ if there is a path $\mu
\in E^*$ with $s ( \mu ) \in V$ and $r ( \mu ) = w$
(we also sometimes say
 that $w$ is downstream from $V$). A \emph{sink}
is a vertex $v \in E^0$ with $s^{-1} (v) = \emptyset$, a
\emph{source} is a vertex $w \in E^0$ with $r^{-1} (w) =
\emptyset$.

A \emph{Cuntz-Krieger $E$-family} in a $C^*$-algebra $B$ consists
of mutually orthogonal projections $\{p_v:v\in E^0\}$ and partial
isometries $\{S_e:e\in E^1\}$ satisfying the \emph{Cuntz-Krieger
relations}
\begin{equation*}
S_e^* S_e=p_{r(e)} \mbox{ for $e\in E^1$} \ \mbox{ and }\
p_v=\sum_{\{ e : s(e)=v\}} S_e S_e^*   \mbox{ whenever $v$ is not
a sink.}
\end{equation*}
It is proved in \cite[Theorem 1.2]{kpr} that there is a universal
$C^*$-algebra $C^*(E)$ generated by a non-zero Cuntz-Krieger
$E$-family $\{S_e,p_v\}$.  A product
$S_\mu:=S_{\mu_1}S_{\mu_2}\dots S_{\mu_n}$ is non-zero precisely
when $\mu=\mu_1\mu_2\cdots\mu_n$ is a path in $E^n$. Since the
Cuntz-Krieger relations imply that the projections $S_eS_e^*$ are
also mutually orthogonal, we have $S_e^*S_f=0$ unless $e=f$, and
words in $\{S_e,S_f^*\}$ collapse to products of the form $S_\mu
S_\nu^*$ for $\mu,\nu\in E^*$ satisfying $r(\mu)=r(\nu)$ (cf.\
\cite[Lemma
  1.1]{kpr}).
Indeed, because the family $\{S_\mu S_\nu^*\}$ is closed under
multiplication and involution, we have
\begin{equation}
C^*(E)=\clsp\{S_\mu S_\nu^*:\mu,\nu\in E^*\mbox{ and
}r(\mu)=r(\nu)\}.\label{spanningset}
\end{equation}
The algebraic relations and the density of $\mbox{span}\{S_\mu
S_\nu^*\}$ in $C^*(E)$ play a critical role throughout the paper.
We adopt the conventions that vertices are paths of length 0, that
$S_v:=p_v$ for $v\in E^0$, and that all paths $\mu,\nu$ appearing
in (\ref{spanningset}) are non-empty; we recover $S_\mu$, for
example, by taking $\nu=r(\mu)$, so that $S_\mu S_\nu^*=S_\mu
p_{r(\mu)}=S_\mu$.

If $z\in S^1$, then the family $\{zS_e,p_v\}$ is another
Cuntz-Krieger $E$-family which generates $C^*(E)$, and the
universal property gives a homomorphism $\gamma_z:C^*(E)\to
C^*(E)$ such that $\gamma_z(S_e)=zS_e$ and $\gamma_z(p_v)=p_v$.
The homomorphism $\gamma_{\overline z}$ is an inverse for
$\gamma_z$, so $\gamma_z\in\Aut C^*(E)$, and a routine
$\epsilon/3$ argument using (\ref{spanningset}) shows that
$\gamma$ is a strongly continuous action of $S^1$ on $C^*(E)$. It
is called the \emph{gauge action}. Because $S^1$ is compact,
averaging over $\gamma$ with respect to normalised Haar measure
gives an expectation $\Phi$ of $C^*(E)$ onto the fixed-point
algebra $C^*(E)^\gamma$:
\[
\Phi(a):=\frac{1}{2\pi}\int_{S^1} \gamma_z(a)\,d\theta\ \mbox{ for
}\ a\in C^*(E),\ \ z=e^{i\theta}.
\]
The map $\Phi$ is positive, has norm $1$, and is faithful in the
sense that $\Phi(a^*a)=0$ implies $a=0$.

{}From Equation (\ref{spanningset}), it is easy to see that a graph
$C^*$-algebra is unital if and only if the underlying graph is
finite. When we consider infinite graphs, formulas which involve
sums of projections may contain infinite sums. To interpret these,
we use strict convergence in the multiplier algebra of $C^*(E)$:

\begin{lemma}[\cite{kpr}]\label{strict}
Let $E$ be a row-finite graph, let $A$ be a $C^*$-algebra
generated by a Cuntz-Krieger $E$-family $\{T_e,q_v\}$, and let
$\{p_n\}$ be a sequence of projections  in $A$. If $p_nT_\mu
T_\nu^*$ converges for every $\mu,\nu\in E^*$, then $\{p_n\}$
converges strictly to a projection $p\in M(A)$.
\end{lemma}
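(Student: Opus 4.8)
The plan is to use the standard characterisation of strict convergence in the multiplier algebra $M(A)$: a bounded net $\{p_n\}$ converges strictly to some $p \in M(A)$ precisely when $p_n a$ and $a p_n$ converge in norm for every $a$ in a dense subset of $A$ (one then extends to all of $A$ by an $\epsilon/3$ argument, using that the net is uniformly bounded in norm). Here the $p_n$ are projections, hence bounded by $1$, so boundedness is automatic, and by \eqref{spanningset} the linear span of the elements $T_\mu T_\nu^*$ is dense in $A$. Thus the first step is to reduce the claim to: for each $\mu,\nu \in E^*$ with $r(\mu)=r(\nu)$, the sequences $p_n T_\mu T_\nu^*$ and $T_\mu T_\nu^* p_n$ converge in norm.

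The hypothesis gives us directly that $p_n T_\mu T_\nu^*$ converges for all $\mu,\nu$. For the right multiplication, I would take adjoints: $(T_\mu T_\nu^* p_n)^* = p_n T_\nu T_\mu^*$, which converges by hypothesis (applied to the pair $\nu,\mu$, noting $r(\nu)=r(\mu)$), and since the involution is isometric this shows $T_\mu T_\nu^* p_n$ converges as well. So both one-sided products converge on the dense spanning set. The $\epsilon/3$ argument then promotes this to convergence of $p_n a$ and $a p_n$ for every $a \in A$: given $a$ and $\epsilon$, pick a finite linear combination $b$ of elements $T_\mu T_\nu^*$ with $\|a-b\|<\epsilon/3$, use $\|p_n\|\le 1$ to bound $\|p_n a - p_n b\|<\epsilon/3$ uniformly in $n$, and choose $N$ so that $\|p_n b - p_m b\|<\epsilon/3$ for $n,m\ge N$; the same works for $a p_n$. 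Hence $\{p_n a\}$ and $\{a p_n\}$ are Cauchy, so they converge.

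Defining $p$ by $p\cdot a := \lim_n p_n a$ and $a \cdot p := \lim_n a p_n$, one checks $p$ is a double centraliser (i.e. $a(pb) = (ap)b$ for all $a,b$), so $p \in M(A)$, and by construction $p_n \to p$ strictly. It remains to see that $p$ is a projection. Since each $p_n$ is self-adjoint, $a p_n = (p_n a^*)^*$ and passing to the limit gives $a\cdot p = (p\cdot a^*)^*$, so $p = p^*$ in $M(A)$. For idempotency, note $p_n^2 = p_n$, so for $a \in A$ we have $p\cdot(p\cdot a) = \lim_n p_n(\lim_m p_m a) = \lim_n \lim_m p_n p_m a$; using again that $\|p_n\|\le 1$ together with $p_m a \to p\cdot a$, the iterated limit equals $\lim_n p_n(p\cdot a) = p\cdot(p\cdot a)$ trivially — more carefully, $\lim_n p_n p_n a = \lim_n p_n a = p\cdot a$ while continuity of left multiplication by the fixed element $p$ of $M(A)$ in the strict topology gives $p\cdot(p\cdot a) = \lim_n p\cdot(p_n a) = \lim_n \lim_k p_k p_n a$, and a standard interchange (justified by the uniform bound $\|p_k\|\le 1$) identifies this with $\lim_n p_n a = p\cdot a$. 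Hence $p^2 = p$.

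The only genuinely delicate point is the interchange of limits needed to prove $p^2 = p$; everything else is the routine dense-subset/$\epsilon/3$ machinery for strict convergence. The cleanest way to handle it rigorously is to observe that strict convergence $p_n \to p$ together with the uniform norm bound implies $p_n^2 \to p^2$ strictly (multiplication is jointly strictly continuous on norm-bounded sets), and since $p_n^2 = p_n \to p$ strictly and strict limits are unique, $p^2 = p$. I expect this last observation — joint strict continuity of multiplication on bounded sets, which is exactly where the boundedness $\|p_n\|\le 1$ is used — to be the step most worth stating explicitly, as it is the crux that makes the projection property survive the limit. Note also that row-finiteness of $E$ is used only insofar as it guarantees that $A$ and its Cuntz–Krieger relations behave as stated; the analytic content of the lemma is the general multiplier-algebra fact above.
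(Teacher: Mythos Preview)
The paper does not prove this lemma; it is simply quoted from \cite{kpr} as background material, with no argument given. Your outline is the standard proof and is correct: density of $\operatorname{span}\{T_\mu T_\nu^*\}$, the adjoint trick for right multiplication, the $\epsilon/3$ extension using $\|p_n\|\le 1$, and finally joint strict continuity of multiplication on norm-bounded subsets of $M(A)$ to pass from $p_n^2=p_n$ to $p^2=p$. The only cosmetic suggestion is to drop the first attempt at the idempotency argument (the iterated-limit manipulation) and go straight to the clean statement you give at the end, since that is exactly the point and the earlier version is both redundant and a little muddled.
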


The directed graph $\Delta_{\mathbb K}/\Gamma$ 
we obtain from a Mumford curve, with the orientation of Lemma \ref{defor}, 
is locally finite, has no sources and contains a subgraph 
$\Delta_\Gamma'/\Gamma$ with no sources and with the following 
two properties. If $v$ is any vertex in $\Delta_{\mathbb K}/\Gamma$ there exists a 
path in $\Delta_{\mathbb K}/\Gamma$ with range $v$ and source contained in 
$\Delta_\Gamma'/\Gamma$, and for any path  with source outside
$M$, the range is outside $M$. 
For such a graph we can define a new circle action by restricting the 
gauge action to the subgraph. The properties of this action turn out to 
be crucial for us.

The reason may be found in \cite{CNNR}, where the existence of a Kasparov $A$-$A^\s$
module for a circle action $\s$ on $A$ was found to be equivalent to a condition on the
spectral subspaces $A_k=\{a\in A: \s_z(a)=z^ka\}$. The condition, called the {\em spectral 
subspace condition} in \cite{CNNR}, states that for all $k\in\Z$, $A_kA_k^*$, 
always an ideal in $A^\s$, is in fact a complemented ideal in $A^\s$. Thus we must have 
$A^\s=A_kA_k^*\oplus  G_k$ for some other ideal $G_k$.
It turns out that the graphs arising from Mumford curves allow us to define a circle action for 
which the spectral subspaces satisfy the spectral subspace condition.

\begin{defn}
Let $E$ be a locally finite directed graph with no sources, $M\subset E$ a 
subgraph with no sources and such that

\vspace{-6pt}

1) for any $v\in E^0$ there is a 
path $\mu$ with $s(\mu)\in M$ and $r(\mu)=v$
\vspace{-6pt}

2) for all paths 
$\rho$ with $s(\rho)\not\in M$ we have $r(\rho)\not\in M$. 
\vspace{-6pt}

Then we say that 
$E$ has zhyvot $M$, and that $M$ is a zhyvot of $E$.

The zhyvot action $\s:\T\to Aut(C^*(E))$ is defined by
$$ \s_z(S_e)=\left\{\begin{array}{ll} \gamma_z(S_e) & e\in M^1\\
S_e & e\not\in M^1\end{array}\right.\qquad \s_z(p_v)=p_v,\ \ v\in E^0,$$
where $\gamma$ is the usual gauge action.  If $\mu$ is a path in 
$E$, let $|\mu|_\s$ be the non-negative integer such that
$\s_z(S_\mu)=z^{|\mu|_\s}S_\mu$.
\end{defn}

{\bf Remark} The zhyvot of a graph need not be unique.

{\bf Example} In the case of Mumford curves, the finite graph $\Delta_\Gamma '/\Gamma$
gives a zhyvot for the infinite graph $\Delta_{\mathbb K}/\Gamma$. There are other possible 
choices of a zhyvot for the same graph $\Delta_{\mathbb K}/\Gamma$, which are
interesting from the point of view of the geometry of Mumford curves. 
In particular, in the theory of Mumford curves, one considers the
reduction modulo powers $\m^n$ of the maximal ideal $\m \subset
\O_{\mathbb K}$, which provides infinitesimal neighborhoods of order $n$ of
the closed fiber. For each $n\geq 0$, we consider a subgraph
$\Delta_{{\mathbb K},n}$ of the Bruhat-Tits tree $\Delta_{\mathbb K}$ defined by setting
$$ \Delta_{{\mathbb K},n}^0 := \{ v\in \Delta_{\mathbb K}^0 : \,  d(v,
\Delta_\Gamma ')\leq n \}, $$ with respect to the distance
\eqref{dist}, with $d(v,\Delta_\Gamma ') := \inf \{ d(v,\tilde v):
\, \tilde v \in (\Delta_\Gamma ')^0 \}$, and
$$ \Delta_{{\mathbb K},n}^1 :=\{ w\in \Delta_{\mathbb K}^1 : \, s(w), r(w)\in
\Delta_{{\mathbb K},n}^0 \}. $$ Thus, we have $\Delta_{{\mathbb K},0} = \Delta_\Gamma '$
and $\Delta_{\mathbb K}=\cup_n \Delta_{{\mathbb K},n}$. For all $n\in \N$, the graph
$\Delta_{{\mathbb K},n}$ is invariant under the action of the Schottky group
$\Gamma$ on $\Delta$, and the finite graph $\Delta_{{\mathbb K},n}/\Gamma$
gives the dual graph of the reduction $X_{\mathbb K} \otimes \O/ \m^{n+1}$.
Thus, we refer to the $\Delta_{{\mathbb K},n}$ as {\em reduction graphs}. They
form a directed family with inclusions $j_{n,m}: \Delta_{{\mathbb K},n}
\hookrightarrow \Delta_{{\mathbb K},m}$, for all $m\geq n$, with all the
inclusions compatible with the action of $\Gamma$. Each of the quotient graphs
$\Delta_{{\mathbb K},n}/\Gamma$ also gives a zhyvot for $\Delta_{\mathbb K}/\Gamma$.
In the following we will concentrate on the case where $M=\Delta_\Gamma '/\Gamma$
but one can equivalently work with the reduction graphs.

Given a graph $E$ with zhyvot $M$ and $k\geq 0$ define
$$ F_k:=\overline{\mbox{span}}\{S_\mu S_\nu^*:\,|\mu|_\s=
|\nu|_\s\geq k\},$$
$$ G_k:=
\overline{\mbox{span}}\{S_\mu S_\nu^*:\,0\leq |\mu|_\s=
|\nu|_\s< k,\ \mbox{and either} \ r(\mu)=r(\nu)\not\in M\ \mbox{or}\ r(\mu)=
r(\nu)\ \mbox{is a sink in}\ M\}.$$

\smallskip

Observe that in the definition of $G_k$, the sinks need not be sinks of the full graph $E$, 
just sinks of the subgraph $M$.

{\bf Notation} Given a path $\rho\in E^*$, we let $\underline{\rho}$ denote the initial segment
of $\rho$ and let $\overline{\rho}$ denote the final segment; in all cases the length of these 
segments will be clear from context. We always have $\rho=\underline{\rho}\overline{\rho}$.

\begin{lemma} Let $E$ be a locally finite directed graph with 
no sources and zhyvot $M\subset E$. Let  $F=C^*(E)^\s$ be the fixed
point algebra for the zhyvot action. Then
$$ F=F_k\oplus G_k,\qquad k=1,2,3,\dots.$$
\end{lemma}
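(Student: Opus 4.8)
The plan is to show the two inclusions $F_k\oplus G_k\subseteq F$ and $F\subseteq F_k\oplus G_k$, together with the fact that the sum is direct (i.e. $F_k\cap G_k=0$). The first inclusion is almost immediate: $F$ is by definition the closed span of all $S_\mu S_\nu^*$ with $|\mu|_\s=|\nu|_\s$ and $r(\mu)=r(\nu)$ (this is the standard description of the fixed-point algebra for a circle action applied to the zhyvot action, using the gauge-style averaging expectation restricted to $\s$), and both of the spanning families defining $F_k$ and $G_k$ consist of such elements, so $F_k,G_k\subseteq F$; since each is a closed subspace, $F_k+G_k\subseteq F$. Directness is a disjointness-of-degrees argument: an element $S_\mu S_\nu^*$ with $|\mu|_\s=|\nu|_\s\geq k$ is orthogonal to one with $|\mu'|_\s=|\nu'|_\s<k$ because $S_\mu^*S_{\mu'}=0$ unless $\mu'$ is an initial segment of $\mu$ (or vice versa), and then a careful bookkeeping with the $\s$-lengths and the range condition shows the products vanish; I would isolate this as a short computation showing $F_kG_k=G_kF_k=0$, hence $F_k\cap G_k=0$ and the algebraic sum is already a direct sum of closed subspaces, in fact a $C^*$-direct sum once we know $F_k$ and $G_k$ are ideals in $F$ (which is where the zhyvot hypotheses enter).

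The real content is the inclusion $F\subseteq F_k\oplus G_k$, i.e. every spanning element $S_\mu S_\nu^*$ with $|\mu|_\s=|\nu|_\s$ and $r(\mu)=r(\nu)$ lies in $F_k+G_k$. If $|\mu|_\s\geq k$ we are done by definition, so assume $0\leq |\mu|_\s=|\nu|_\s<k$. The idea is to push the element ``downstream'' using the Cuntz–Krieger relation $p_w=\sum_{s(e)=w}S_eS_e^*$ at the common range vertex $w=r(\mu)=r(\nu)$: writing $S_\mu S_\nu^*=S_\mu p_w S_\nu^*=\sum_{s(e)=w}S_{\mu e}S_{\nu e}^*$, we either increase the $\s$-length by one (if the emitted edge $e$ lies in $M^1$) or leave it unchanged (if $e\notin M^1$). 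Iterating this, and using zhyvot property (1) — every vertex is reachable from $M$ by a path, equivalently no infinite downstream path can avoid $M^1$ forever... — the point is rather the converse direction: we keep expanding until either (a) the $\s$-length has reached $k$, landing us in $F_k$, or (b) we reach a vertex which is a sink \emph{in} $M$ (so the expansion inside $M$ terminates, and by zhyvot property (2) any further edges leave $M$ and do not change the $\s$-length), or (c) we reach a vertex already outside $M$; in cases (b) and (c) the resulting terms $S_{\mu\rho}S_{\nu\rho}^*$ have $\s$-length still $<k$ but satisfy exactly the range condition in the definition of $G_k$, so they lie in $G_k$. Local finiteness guarantees each expansion step is a finite sum so no convergence issue arises within a single step, and one must check the process terminates after finitely many steps for each fixed starting path: this follows because in $M$, which has no sources, the only way to fail to increase the $\s$-length indefinitely is to hit an $M$-sink, and property (2) prevents re-entering $M$ from outside — so after finitely many $M$-steps we either accumulate $k$ units of $\s$-length or get stuck at an $M$-sink.

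The main obstacle, and the step I would be most careful about, is the termination/exhaustion argument in case analysis (b)–(c): one has to argue that for a fixed $S_\mu S_\nu^*$ with $\s$-length $<k$, only finitely many iterations of the downstream expansion are needed before every term has been classified into $F_k$ or $G_k$, and that the leftover ``boundary'' terms genuinely match the range condition defining $G_k$ (in particular that ``sink in $M$'' is the right stopping condition, not ``sink in $E$''). This requires using both zhyvot axioms essentially: axiom (2) to know that once a path leaves $M$ it stays out and its $\s$-length freezes, and axiom (1) together with ``$M$ has no sources'' and ``$E$ is locally finite with no sources'' to control the shape of the tree of extensions. Once termination is established, taking closed spans and invoking Lemma \ref{strict}-style strict-convergence bookkeeping (to handle the passage to closures and the infinite graph) finishes the proof; I would also remark that the same argument shows $F_k$ and $G_k$ are ideals in $F$, which upgrades the vector-space direct sum to a $C^*$-algebra direct sum.
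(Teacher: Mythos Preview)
Your approach is essentially the paper's: verify $F_kG_k=G_kF_k=0$ on generators, then expand any $S_\mu S_\nu^*$ with $|\mu|_\s=|\nu|_\s<k$ and $r(\mu)\in M$ (not an $M$-sink) via the Cuntz--Krieger relations until each summand lands in $F_k$ or $G_k$. Two small corrections. First, the paper does the expansion in a single step, writing
\[
S_\mu S_\nu^*=\sum_{s(\rho)=r(\mu),\ |\rho|\preceq k-n+1}S_{\mu\rho}S_{\nu\rho}^*,\qquad n=|\mu|_\s,
\]
which is a \emph{finite} sum by row-finiteness; one then simply observes that terms with $|\rho|_\s\geq k-n$ lie in $F_k$ while those with $|\rho|_\s<k-n$ must have $r(\rho)\notin M$ or $r(\rho)$ an $M$-sink and hence lie in $G_k$. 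So no iterative process, termination analysis, or strict-convergence bookkeeping is needed. Second, zhyvot axiom (1) plays no role in this lemma --- your attempt to invoke it (``equivalently no infinite downstream path can avoid $M^1$ forever'') is neither equivalent to (1) nor needed. Only axiom (2) is used, to guarantee that once a path leaves $M$ it stays out, which is what forces $r(\rho)\notin M$ when $|\rho|_\s<|\rho|$. Axiom (1) and ``$M$ has no sources'' are first needed in the subsequent proposition, for the $k\leq 0$ spectral subspaces.
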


\begin{proof}
We first check using generators that $F_kG_k=G_kF_k=\{0\}$; once we 
have shown that $F_k+G_k=F$ this will also show that
$F_k$ and $G_k$ are both ideals (that they are subalgebras follows 
from similar, but simpler, calculations to those below). 

Fix $k\geq 1$.
Let $S_\mu S_\nu^*\in G_k$ so that $0\leq|\mu|_\s=|\nu|_\s<k$ and either
$r(\mu)=r(\nu)\not\in M$ or is a sink of $M$. Let $S_\rho S_\tau^*\in F_k$ so that
$|\rho|_\s=|\tau|_\s\geq k$. Then
$$S_\mu S_\nu^* S_\rho S_\tau^*
=\left\{\begin{array}{ll} 
S_\mu S_{\overline\rho}S_\tau^*\delta_{\nu,\underline\rho} 
& |\nu|\leq |\rho|\\
S_\mu S_{\overline\nu}^*S_\tau^*\delta_{\underline\nu,\rho} 
& |\nu|\geq |\rho|
\end{array}\right.,
$$
where $|\cdot|$ denotes the usual length of paths. When $|\nu|\geq|\rho|$,
the product is nonzero if and only if $\underline{\nu}=\rho$ but
$$ |\underline{\nu}|_\s\leq|\nu|_\s<|\rho|_\s,$$
so this can not happen. When $|\nu|\leq|\rho|$,  the product is nonzero if and only if
$\nu=\underline\rho$, but the range of $\nu\not\in M$ or is a sink of $M$ while 
$|\nu|_\s=|\underline\rho|_\s$ implies that $|\underline\rho|_\s<|\rho|_\s$, 
and so $r(\underline\rho)\in M$ and is not a sink of $M$. Hence the product is zero, and 
$G_kF_k=\{0\}$. The computation $F_kG_k=\{0\}$ is entirely analogous, 
so we omit it.

To see that $F_k+G_k=F$, we need only show that the generators
$S_\mu S_\nu^*$ with $0\leq|\mu|_\s=|\nu|_\s<k$ and $r(\mu)=r(\nu)\in M$ is not a sink,
are sums of elements from $F_k$ and $G_k$, all other generators 
having been accounted for.

So let $0\leq n=|\mu|_\s=|\nu|_\s<k$ and recall that $|\rho|\preceq k$
if $|\rho|= k$ or $|\rho|<k$ and $r(\rho)$ is a sink. Then
$$ S_\mu S_\nu^*=\sum_{\rho\in E^*,\ s(\rho)=r(\mu),\ |\rho|\preceq k-n+1}
S_\mu S_\rho S_\rho^* S_\nu^*.$$
If $0\leq|\rho|_\s<k-n$ then we must have $r(\rho)\not\in M$ or $r(\rho)$ a sink of $M$.
This is because $|\rho|_\s\leq|\rho|$, and if $r(\rho)$ is not a sink, we have
strict inequality since $|\rho|=k-n+1$. Hence if $r(\rho)$ is not a sink, 
$r(\rho)\not\in M$. On the other hand if 
$r(\rho)$ is a sink of $E$, then either $r(\rho)\not\in M$ or $r(\rho)$ is a sink of $M$.

Thus for $0\leq|\rho|_\s<k-n$ we have
$S_\mu S_\rho S_\rho^*S_\nu^*\in G_k$, while if 
$k-n\leq|\rho|_\s\leq k-n+1$, we have 
$S_\mu S_\rho S_\rho^* S_\nu^*\in F_k$. 

Finally, to see that $F=F_k\oplus G_k$ for each 
$k\geq 0$, observe that we can split 
the sequence 
$$0\to F_k\stackrel{i}{\to}F\to G_k\to 0$$
using the homomorphism $\phi_k:F\to F_k$ defined by
$$\phi_k(f)=P_kfP_k,\qquad P_k=
\sum_{|\mu|_\s=k}S_\mu S_\mu^*.$$
Checking that $\phi_k$ is a homomorphism and has 
 range $F_k$ is an exercise with the generators.
\end{proof}

\begin{prop}\label{pr:global-assumption} 
Let $E$ be a locally finite directed graph without sources and 
with zhyvot $M$. For $k\in\Z$ 
let $A_k=\{a\in C^*(E):\s_z(a)=z^ka\}$ denote the spectral subspaces 
for the zhyvot action. Then 
$$ A_kA_k^*=\left\{\begin{array}{ll} F_k & k\geq 0\\ F & k\leq 0
\end{array}\right..$$
\end{prop}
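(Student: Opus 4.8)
The plan is to analyze $A_k A_k^*$ by working with the spanning set of $C^*(E)$ and exploiting the fact that the zhyvot action only ``counts'' edges inside $M$. First I would identify the spectral subspaces concretely: since $\s_z(S_\mu S_\nu^*) = z^{|\mu|_\s - |\nu|_\s} S_\mu S_\nu^*$, we get $A_k = \overline{\operatorname{span}}\{S_\mu S_\nu^* : |\mu|_\s - |\nu|_\s = k,\ r(\mu)=r(\nu)\}$. Consequently $A_k A_k^*$ is the closed span of products $S_\mu S_\nu^* S_\tau S_\rho^*$ where $|\mu|_\s - |\nu|_\s = k = |\tau|_\s - |\rho|_\s$; collapsing via the Cuntz--Krieger relations, such a product is (up to the $\delta$-constraints as in the previous lemma's proof) of the form $S_{\mu'} S_{\rho'}^*$ with $|\mu'|_\s = |\rho'|_\s$, so it lies in $F = C^*(E)^\s$. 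Thus $A_k A_k^* \subseteq F$ always, and the content is to pin down exactly which part of $F$ is hit.

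Next I would treat the two cases. For $k \leq 0$: the claim $A_k A_k^* = F$. The key observation is that the zhyvot hypothesis (1) says every vertex $v$ is downstream from $M$ via some path $\mu_v$ with $s(\mu_v) \in M$; combined with the fact that $M$ has no sources and $E$ has no sources, one can always ``prepend'' edges outside $M$ (which cost nothing in $|\cdot|_\s$) or, more importantly, for $k<0$ one needs to produce elements of negative $\s$-degree, which requires $S_\nu^*$ with $|\nu|_\s > 0$, i.e. $\nu$ traversing $M$. The point is that $A_0 A_0^* = F$ should follow because $p_v = S_e S_e^* + \cdots \in A_0 A_0^*$ for vertices emitting edges, and $p_v \in A_0 \cap A_0^*$ for sinks, so all the $p_v$ and hence (by the spanning-set argument, writing $S_\mu S_\nu^* = S_\mu p_{r(\mu)} S_\nu^*$ and factoring) all of $F$ lies in $A_0 A_0^*$; then for $k<0$ one uses $A_k A_k^* \supseteq A_k A_0 A_0^* A_k^*$... more carefully, $A_k \supseteq A_k A_0$ and $A_0 A_0^* = F \ni$ the relevant projections, so $A_k A_k^* = A_k F A_k^* \supseteq$ enough; but actually the cleanest route is $A_k^* A_k \subseteq A_0 A_0^* = F$ and $A_k A_k^* A_k A_k^* = A_k (A_k^* A_k) A_k^*$, and since for $k \le 0$ every generator $S_\mu S_\nu^*$ of $F$ can be written with an $S_\nu^*$ tail that extends into $M$ by negative degree, one realizes it as a product. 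I would spell out that for $k \le 0$, given $S_\mu S_\nu^* \in F$ with $r(\mu) = r(\nu) =: w$, one picks (using no-sources and property (1) applied inside $M$) a path $\lambda$ in $M$ with $s(\lambda) = w$ and $|\lambda|_\s = -k \ge 0$; then $S_\mu S_\nu^* = S_{\mu\lambda}(S_{\nu\lambda})^* $... wait, that has $\s$-degree $0$ still, so instead $S_\mu S_\nu^* = (S_\mu S_\lambda^{*})(S_\lambda S_\nu^*)^*$ where $S_\mu S_\lambda^* \in A_{-|\lambda|_\s} = A_k$ — this needs $\lambda$ to be a path with $r(\lambda) = r(\mu)$, so one uses instead that $w$ receives a path from $M$: by property (1) there is $\lambda \in M^*$ with $r(\lambda) = w$; choosing it with $|\lambda|_\s = -k$ (possible since $M$ has no sources, so we can lengthen within $M$), then $S_\mu S_\lambda^* \in A_{|\mu|_\s - |\lambda|_\s}$; since $|\mu|_\s = |\nu|_\s$ and $|\lambda|_\s = -k$ this has degree $|\mu|_\s + k$ — not quite $k$. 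The honest fix: factor $S_\mu S_\nu^* = S_\mu S_\nu^* = (S_\mu)(S_\nu)^*$ and note $S_\mu \in A_{|\mu|_\s}$; to land in $A_k A_k^*$ I write $S_\mu S_\nu^* = S_\mu S_\lambda^* S_\lambda S_\nu^*$ with $\lambda$ a path from $M$ into $w$ of $\s$-length $m := |\mu|_\s - k \ge |\mu|_\s$ (achievable for $k \le 0$); then $S_\mu S_\lambda^* \in A_{|\mu|_\s - m} = A_k$ and $S_\lambda S_\nu^* \in A_{m - |\nu|_\s} = A_{-k}$, so $(S_\lambda S_\nu^*)^* \in A_k^*$, giving $S_\mu S_\nu^* \in A_k A_k^*$. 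This uses precisely that such $\lambda$ exists, which is where zhyvot property (1) and no-sources-in-$M$ enter.

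For $k \ge 0$: the claim $A_k A_k^* = F_k$. One inclusion, $A_k A_k^* \subseteq F_k$, is direct from the collapsing computation: a product $S_\mu S_\nu^* (S_\tau S_\rho^*)^* = S_\mu S_\nu^* S_\rho S_\tau^*$ with $|\mu|_\s - |\nu|_\s = k = |\tau|_\s - |\rho|_\s$ reduces to $S_{\mu'} S_{\tau'}^*$ where, tracking the degrees through the two cases $|\nu| \lessgtr |\rho|$, one has $|\mu'|_\s = |\tau'|_\s \ge k$ (since the un-cancelled part of $\mu$ retains the $k$ extra $\s$-steps that $\mu$ has over $\nu$, and cancellation only removes matched segments). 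For the reverse inclusion $F_k \subseteq A_k A_k^*$: take a generator $S_\mu S_\nu^*$ with $|\mu|_\s = |\nu|_\s \ge k$; factor $\mu = \underline\mu\,\overline\mu$ and $\nu = \underline\nu\,\overline\nu$ so that $|\underline\mu|_\s = |\underline\nu|_\s = k$ — this is possible since $|\mu|_\s, |\nu|_\s \ge k$ and $|\cdot|_\s$ increases by $0$ or $1$ along a path, so there is an initial segment of exact $\s$-length $k$. Then $S_\mu S_\nu^* = S_{\underline\mu}(S_{\overline\mu} S_{\overline\nu}^*)S_{\underline\nu}^* = (S_{\underline\mu} S_{\overline\mu} S_{\overline\nu}^* S_{\underline\nu}^*)$... rather $S_\mu S_\nu^* = (S_{\underline\mu} X)(S_{\underline\nu})^*$ does not separate cleanly; instead write $S_\mu S_\nu^* = (S_\mu S_{\underline\nu}^{*})(S_{\underline\nu} S_\nu^*)^*$ — here $S_{\underline\nu} S_\nu^* = S_{\underline\nu} S_{\underline\nu}^* \cdot$... no. The clean factorization: $S_\mu S_\nu^* = S_{\underline\mu}\, S_{\overline\mu}\, S_{\overline\nu}^*\, S_{\underline\nu}^* = (S_{\underline\mu} S_{\overline\mu})(S_{\underline\nu} S_{\overline\nu})^* $, so instead pick $S_\mu (S_{\underline\nu})^* \in A_{|\mu|_\s - k}$... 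I will present it as: $S_\mu S_\nu^* = A B^*$ with $A = S_\mu S_{\overline\nu}^*$? That needs $r(\mu) = r(\overline\nu)$, true since $r(\mu)=r(\nu)=r(\overline\nu)$; then $A = S_\mu S_{\overline\nu}^* \in A_{|\mu|_\s - |\overline\nu|_\s} = A_{|\mu|_\s - (|\nu|_\s - k)} = A_k$, and $B = S_{\underline\nu} \in A_k$, and indeed $S_\mu S_\nu^* = S_\mu S_{\overline\nu}^* S_{\overline\nu} S_{\underline\nu}^*$? No: $S_{\overline\nu}^* S_{\overline\nu} = p_{r(\overline\nu)} = p_{r(\nu)} = p_{r(\mu)}$, but we need $S_\nu^* = S_{\overline\nu}^* S_{\underline\nu}^*$, i.e. $S_\nu = S_{\underline\nu} S_{\overline\nu}$, correct. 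So $S_\mu S_\nu^* = S_\mu p_{r(\mu)} S_{\overline\nu}^* S_{\underline\nu}^* = (S_\mu S_{\overline\nu}^*)(S_{\underline\nu})^* \cdot$ — wait that drops a factor; $S_\mu S_\nu^* = S_\mu S_{\overline\nu}^* S_{\underline\nu}^*$ and $S_\mu S_{\overline\nu}^* \in A_k$ while $S_{\underline\nu} \in A_k$ so $S_{\underline\nu}^* \in A_k^*$, giving $S_\mu S_\nu^* \in A_k A_k^*$. Good — so this direction needs only that a $\s$-initial segment of length exactly $k$ exists, which holds since $|\nu|_\s \ge k$.

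I expect the main obstacle to be the careful bookkeeping of $\s$-degrees through the Cuntz--Krieger collapse $S_\nu^* S_\tau \in \{S_{\overline\tau} \delta, S_{\overline\nu}^* \delta\}$ — in particular verifying in the $k \ge 0$ case that the reduced product genuinely has $\s$-degree $\ge k$ on both sides (rather than just $= k$), and in the $k \le 0$ case correctly locating paths of prescribed $\s$-length using properties (1) and (2) of the zhyvot together with the no-sources hypotheses. The rest is the same style of generator manipulation already used in the proof of the preceding lemma, so I would present those computations tersely and refer back.
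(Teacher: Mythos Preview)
Your proposal, once the several false starts are discarded, is correct and matches the paper's proof: for $k\ge 0$ you factor a generator of $F_k$ as $(S_\mu S_{\overline\nu}^*)\,S_{\underline\nu}^*$ with $|\underline\nu|_\s=k$ (the paper does the symmetric splitting of $\alpha$ rather than $\beta$), and for $k\le 0$ you insert $S_\lambda^* S_\lambda$ for a path $\lambda$ with $r(\lambda)=r(\mu)$, $s(\lambda)\in M$, and $|\lambda|_\s=|\mu|_\s+|k|$, exactly as the paper does using zhyvot property~(1) and the no-sources hypothesis on $M$. The degree bookkeeping for $A_kA_k^*\subseteq F_k$ is handled in the paper just as tersely as you propose, by noting $|\mu|_\s,|\sigma|_\s\ge k$ and that these outer paths survive the collapse.
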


{\bf Remark} In particular, the spectral subspace assumptions of \cite{CNNR} are 
satisfied for the zhyvot action on a graph with a zhyvot.

\begin{proof} With $|\mu|$ denoting the ordinary length of paths in $E$,
we have the product formula
\begin{equation}
(S_\mu S_\nu^*)(S_\s S_\rho^*)^*=S_\mu S_\nu^* S_\rho S_\s^*
=\left\{\begin{array}{ll} 
S_\mu S_{\overline\rho}S_\s^*\delta_{\nu,\underline\rho} & |\nu|\leq |\rho|\\
S_\mu S_{\overline\nu}^*S_\s^*\delta_{\underline\nu,\rho} & |\nu|\geq |\rho|
\end{array}\right.,
\label{eq:product}
\end{equation}
where $\underline{\rho}$ is the initial segment of $\rho$ of appropriate 
length, and $\overline{\rho}$ is the final segment. If $S_\mu S_\nu^*$, 
$S_\s S_\rho^*$ are in $A_k$, $k\geq 0$, then
$$ |\mu|_\s-|\nu|_\s=k=|\gamma|_\s-|\rho|_\s,$$
so that $|\gamma|_\s\geq k$ and $|\mu|_\s\geq k$. Together with Equation 
\eqref{eq:product}, this shows that for $k\geq 0$ we have $A_kA_k^*\in F_k$.
Conversely, if $S_\alpha S_\beta^*\in F_k$, so $|\alpha|_\s=|\beta|_\s\geq k$,
we can factor
$$ S_\alpha S_\beta^*=S_{\underline\alpha}S_{\overline\alpha}S_\beta^*
=S_{\underline\alpha}(S_\beta S_{\overline\alpha}^*)^*\in A_kA_k^*.$$

For $k\leq 0$ we of course have $A_kA_k^*\in F$, and so we need only 
show that for any $S_\alpha S_\beta^*\in F$, 
$S_\alpha S_\beta^*\in A_kA_k^*$.

Here we use the final property of zhyvot graphs, namely that we can 
find a path $\lambda\in E^*$ with $s(\lambda)\in M$ and $r(\lambda)
=r(\alpha)=r(\beta)$. Moreover, because $M$ has no sources, we can 
take $|\lambda|_\s$ as great as we like. Thus we can write
$$S_\alpha S_\beta^*=S_\alpha S_\lambda^*S_\lambda S_\beta^*
=(S_\alpha S_\lambda^*)(S_\beta S_\lambda^*)^*.$$
Choosing $|\lambda|_\s=|\alpha|_\s+|k|$ shows that $S_\alpha S_\beta^*
\in A_kA_k^*$.
\end{proof}

This allows us to recover some known structure of the fixed point algebra of a graph
algebra for the usual gauge action, and to understand via the spectral subspace 
condition of \cite{CNNR} exactly why the assumptions of \cite{PRen} were required 
to construct a Kasparov module.

\begin{cor} Let $E$ be a locally finite directed graph without sources. Then the 
fixed point algebra for the usual gauge action decomposes as
$$ F=F_k\oplus G_k,\ \ \ k=1,2,3,\dots$$
where
$$ F_k=\overline{{\rm span}}\{S_\mu S_\nu^*:\,|\mu|=
|\nu|\geq k\},\ \  G_k=
\overline{{\rm span}}\{S_\mu S_\nu^*:\,0\leq |\mu|=
|\nu|< k,\ r(\mu)=r(\nu)\ {\rm is\ a\ sink}\}.$$
\end{cor}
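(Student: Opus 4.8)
The plan is to deduce this corollary from Proposition~\ref{pr:global-assumption} and the preceding lemma by exhibiting the usual gauge action as a zhyvot action for a suitable choice of subgraph. Concretely, I would take $M=E$ itself: a locally finite graph without sources trivially satisfies condition~(1) with $M=E$ (take $\mu$ to be the empty path at $v$, or any path into $v$), and condition~(2) is vacuous since there are no paths $\rho$ with $s(\rho)\notin M=E$. Hence $E$ has zhyvot $M=E$, and by definition the zhyvot action $\s$ for this choice puts $S_e\in M^1=E^1$ into the spectral-weight-one part for every edge $e$, so $\s$ coincides with the ordinary gauge action $\gamma$, and consequently $|\mu|_\s=|\mu|$ for every path $\mu$. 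The fixed point algebra $F=C^*(E)^\s=C^*(E)^\gamma$ is the usual core.

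With this identification, the earlier lemma ($F=F_k\oplus G_k$ for the zhyvot action) applies verbatim. It remains only to observe that, for $M=E$, the definitions of $F_k$ and $G_k$ specialize to the ones in the corollary's statement. For $F_k$ this is immediate: $|\mu|_\s=|\mu|$ turns $\overline{\mathrm{span}}\{S_\mu S_\nu^*:|\mu|_\s=|\nu|_\s\geq k\}$ into $\overline{\mathrm{span}}\{S_\mu S_\nu^*:|\mu|=|\nu|\geq k\}$. For $G_k$, the defining condition was ``$r(\mu)=r(\nu)\notin M$ or $r(\mu)=r(\nu)$ is a sink in $M$''; since $M=E$ the first alternative is empty (every vertex lies in $E$) and ``sink in $M$'' is ``sink in $E$'', so the condition collapses to ``$r(\mu)=r(\nu)$ is a sink'', and again $|\mu|_\s=|\mu|$ gives the stated form of $G_k$.

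I do not anticipate a genuine obstacle here; the corollary is essentially a special case extracted by a dictionary. The one point that warrants a sentence of care is checking that $M=E$ legitimately meets the \emph{letter} of the definition of ``zhyvot'' — in particular that ``$M$ has no sources'' is exactly the hypothesis we are given on $E$, and that the required path in condition~(1) may be taken to have range $v$ and source in $M=E$, which is automatic. Once this is spelled out, the decomposition $F=F_k\oplus G_k$ for $k=1,2,3,\dots$ (and indeed $k=0$, where $F_0=F$, $G_0=\{0\}$) follows immediately, and one notes in passing that Proposition~\ref{pr:global-assumption} then recovers $A_kA_k^*=F_k$ for $k\geq 0$ for the ordinary gauge action, which is the known structural fact alluded to in the remark preceding the corollary.
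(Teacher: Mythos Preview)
Your proposal is correct and matches the paper's approach exactly: the paper's proof consists of the single sentence ``This follows from Proposition~\ref{pr:global-assumption} since $E$ is a graph with zhyvot $E$.'' Your version simply spells out the verification that $M=E$ satisfies the zhyvot conditions and that the general definitions of $F_k$, $G_k$ specialize correctly, which is helpful detail but not a different argument.
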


\begin{proof} This follows from Proposition \ref{pr:global-assumption} 
since $E$ is a graph with zhyvot $E$.
\end{proof}

\section{Schottky invariants of Mumford Curves and Field
Extensions}\label{fieldextsect}

\subsection{Schottky lengths and valuation}

Let $\Gamma\subset \PGL_2({\mathbb K})$ be a $p$-adic Schottky group acting by
isometries on the Bruhat--Tits tree $\Delta_{\mathbb K}$.  As recalled in \S \ref{padicGamma}
above, a hyperbolic element $\gamma\in\Gamma$ determines a unique axis
$L(\gamma)$ in $\Delta_{\mathbb K}$, which is the infinite path of edges
connecting the two fixed points $z^\pm(\gamma)\in
\Lambda_\Gamma\subset \P^1({\mathbb K})=\partial \Delta_{\mathbb K}$. The element $\gamma$
acts on $L(\gamma)$ by a translation of length $\ell(\gamma)$.

To a given set of generators $\{ \gamma_1, \cdots \gamma_g \}$ of
$\Gamma$ one can associate the translation lengths $\ell(\gamma_i)$.
We refer to the collection of values $\{ \ell(\gamma_i) \}$ as 
the {\em Schottky invariants} of $(\Gamma,\{\gamma_i\})$. 
For example, in the case of genus one, one can assume the generator of
$\Gamma$ is given by a matrix for the form
$$ \gamma= \left(\begin{array}{cc} q & 0 \\ 0 & 1 \end{array}\right) $$
with $|q|<1$ so that the fixed points are $z^+(\gamma)=0$ and
$z^-(\gamma)=\infty$. The element $\gamma$ acts on the axis
$L(\gamma)$ as a translation by a length $\ell(\gamma)=\log
|q|^{-1}=v_\m(q)$ equal to the number of vertices in the closed graph
(topologically a circle) $\Delta_\Gamma'/\Gamma$. We see clearly that,
even in the simple genus one case, knowledge of the Schottky invariant
$\ell(\gamma)$ does not suffice to recover the curve. This is clear
from the fact that the Schottky length only sees the valuation of
$q\in {\mathbb K}^*$. Nonetheless, the Schottky lengths give useful computable
invariants. 

\subsection{Field extensions}
In the following section, where we derive explicit KMS states
associated to the infinite graphs given by the quotients
$\Delta_{\mathbb K}/\Gamma$, we also discuss the issue of how the invariants
we construct in this way for Mumford curves behave under field
extensions of ${\mathbb K}$. To this purpose, we recall here briefly how the
graphs $\Delta_{\mathbb K}$ and $\Delta_\Gamma'$ are affected when passing to
a field extension (\cf \cite{Ma}). This was also recalled in more detail in \cite{CM1}.

Let ${\mathbb L}\supset {\mathbb K}$ be a field extension with finite degree,
$[{\mathbb L}:{\mathbb K}]<\infty$, and let $e_{{\mathbb L}/{\mathbb K}}$ be its
ramification index. Let $\O_{\mathbb L}$ and $\O_{\mathbb K}$ denote the respective rings of
integers. There is an embedding of the sets of vertices $\Delta_{\mathbb K}^0
\hookrightarrow \Delta_{\mathbb L}^0$ obtained by assigning to a free $\O_{\mathbb K}$-module 
$M$ of rank $2$ the free $\O_{\mathbb L}$-module of the same rank given by
$M\otimes_{\O_{\mathbb K}}\O_{\mathbb L}$. This operation 
preserves the equivalence relation.
However, the embedding $\Delta_{\mathbb K}^0
\hookrightarrow \Delta_{\mathbb L}^0$ obtained in this way is not
isometric, as one can see from the isomorphism
$(\O_{\mathbb K}/\m^r)\otimes \O_{\mathbb L} \simeq \O_{\mathbb L}/\m^{re_{{\mathbb L}/{\mathbb K}}}$.
This can be corrected by modifying the metric on the graphs $\Delta_{\mathbb L}$,
for all extensions ${\mathbb L}\supset {\mathbb K}$: if one uses 
the ${\mathbb K}$-normalized distance
\begin{equation}\label{Knormdist}
d_{\mathbb K}(M_1,M_2) := \frac{1}{e_{{\mathbb L}/{\mathbb K}}} d_{\mathbb L}(M_1,M_2),
\end{equation}
on $\Delta_{\mathbb L}^0$, one obtains an isometric
embedding $\Delta_{\mathbb K}^0 \hookrightarrow \Delta_{\mathbb L}^0$.

\medskip

Geometrically, the relation between the Bruhat--Tits trees
$\Delta_{\mathbb K}$ and $\Delta_{\mathbb L}$ is described by the following procedure
that constructs $\Delta_{\mathbb L}$ from $\Delta_{\mathbb K}$ given the values of
$e_{{\mathbb L}/{\mathbb K}}$ and $[{\mathbb L}:{\mathbb K}]$. The rule for inserting new vertices and edges when passing to a field extension ${\mathbb L} \supset {\mathbb K}$ is the following.
\begin{enumerate}
\item $e_{{\mathbb L}/{\mathbb K}}-1$ new vertices $\{ v_1, \ldots, v_{e_{{\mathbb L}/{\mathbb K}}-1} \}$
are inserted between each pair of adjacent vertices in $\Delta^0_{\mathbb K}$. Let $\Delta_{{\mathbb L}, {\mathbb K}}^0$ denote the set of all these additional vertices.
\item $q^f + 1$ edges depart from each vertex in
$\Delta_{\mathbb K}^0\cup \Delta_{{\mathbb L}, {\mathbb K}}^0$, with $f =
\frac{1}{e_{{\mathbb L}/{\mathbb K}}}[{\mathbb L}:{\mathbb K}]$. Each such edge has length
$\frac{1}{e_{{\mathbb L}/{\mathbb K}}}$.
\item Each new edge attached to a vertex in
$\Delta_{\mathbb K}^0\cup \Delta_{{\mathbb L}, {\mathbb K}}^0$ is the base of a number of
homogeneous tree of valence $q^f + 1$.  The number is determined by the property that 
in the resulting graph the vertex from which the trees stem also has to have valence $q^f+1$.
The Bruhat--Tits tree $\Delta_{\mathbb L}$ is the union of $\Delta_{\mathbb K}$ with the 
additional inserted vertices $\Delta_{{\mathbb L}, {\mathbb K}}^0$ and the added trees 
stemming from each vertex.
\end{enumerate}

This procedure is illustrated in Figure \ref{extTree}, which we report here from \cite{CM1}.

\begin{figure}
\begin{center}
\epsfig{file=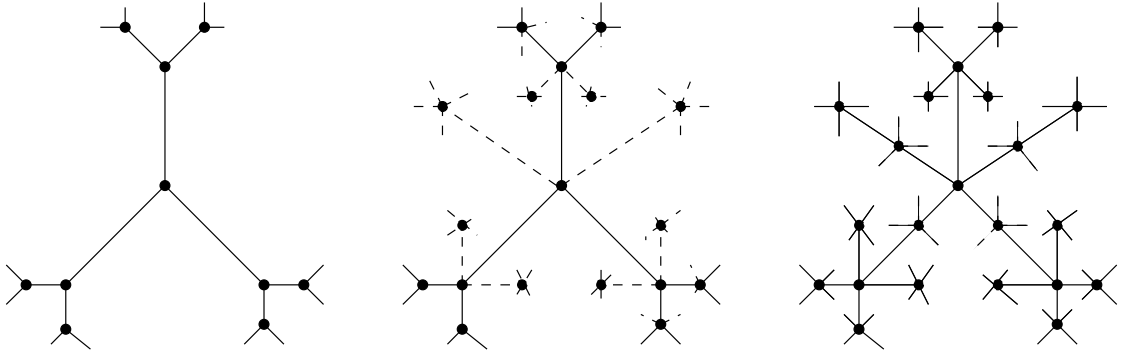} \caption{The tree $\Delta_{\mathbb K}$ for ${\mathbb K}=\Q_2$
and $\Delta_{\mathbb L}$ for a field extension with $f=2$ and $e_{{\mathbb L}/{\mathbb K}}=2$
\label{extTree}}
\end{center}
\end{figure}

Suppose we are given a $p$-adic Schottky group $\Gamma \subset \PGL_2({\mathbb K})$.
Since all nontrivial elements of $\Gamma$ are hyperbolic (the
eigenvalues have different valuation), one can see that the two
fixed points of any nontrivial element of $\Gamma$ are in
$\P^1({\mathbb K})=\partial \Delta_{\mathbb K}$. 
Thus, the limit set $\Lambda_\Gamma$ is contained in $\P^1({\mathbb K})$. 

When one considers a finite extension ${\mathbb L} \supset {\mathbb K}$ and the corresponding
Mumford curve $X_\Gamma({\mathbb L})=\Omega_\Gamma({\mathbb L})/\Gamma$ with
$\Omega_\Gamma({\mathbb L})=\P^1({\mathbb L})\smallsetminus \Lambda_\Gamma$, one can see
this as the boundary of the graph $\Delta_{\mathbb L}/\Gamma$. Notice that the
subtree $\Delta_{\Gamma,{\mathbb L}}'$ of $\Delta_{\mathbb L}$ and the subtree
$\Delta_{\Gamma,{\mathbb K}}'$ of $\Delta_{\mathbb K}$, both of which have boundary
$\Lambda_\Gamma$ only differ by the presence of the additional
$e_{{\mathbb L}/{\mathbb K}}-1$ new vertices in between any two adjacent vertices of
$\Delta_{\Gamma,{\mathbb K}}'$, while no new direction has been added (the
limit points are the same). In particular, this means that the
finite graph $\Delta_{\Gamma,{\mathbb L}}'/\Gamma$ is obtained from
$\Delta_{\Gamma,{\mathbb K}}'/\Gamma$ by adding $e_{{\mathbb L}/{\mathbb K}}-1$ vertices on each
edge. The infinite graph $\Delta_{\mathbb K}/\Gamma$ is obtained by adding to
each vertex of the finite graph $\Delta_{\Gamma,{\mathbb K}}'/\Gamma$ a finite
number (possibly zero) of infinite homogeneous trees of valence
$q+1$ with base at that vertex. Given the finite graph
$\Delta_{\Gamma,{\mathbb K}}'/\Gamma$, the number of such trees to be added at
each vertex is determined by the requirement that the valence of
each vertex of $\Delta_{\mathbb K}/\Gamma$ equals $q+1$. The infinite graph
$\Delta_{\mathbb L}/\Gamma$ is obtained from the graph $\Delta_{\mathbb K}/\Gamma$ by
replacing the homogeneous trees of valence $q+1$ starting from the
vertices of $\Delta_{\Gamma,{\mathbb K}}'/\Gamma$ with homogeneous trees of
valence $q^f+1$ stemming from the vertices of
$\Delta_{\Gamma,{\mathbb L}}'/\Gamma$, so that each resulting vertex of
$\Delta_{\mathbb L}/\Gamma$ has valence $q^f+1$.

We analyze the effect of field extensions from the point of view of KMS weights and
modular index theory in \S \ref{fieldext2sec} below.

\section{Graph KMS Weights on Directed Graphs}

Let $E$ be a row finite graph, and $C^*(E)$ the associated graph
$C^*$-algebra.

\begin{defn}\label{grapwdef} A  graph weight on $E$ is a pair of functions
  $g:E^0\to[0,\infty)$ and $\lambda:E^1\to[0,\infty)$
such that for all vertices $v$
$$ g(v)=\sum_{s(e)=v}\lambda(e)g(r(e)).$$
A graph weight is called faithful if $g(v)\neq 0$ for
all $v\in E^0$. If $\sum_{v\in E^0}g(v)=1$, we call $(g,\lambda)$ a
graph state.
\end{defn}

{\bf Remark} If $\lambda(e)=1$ for all $e\in E^1$, we obtain the
definition of a graph trace, \cite{T}.

{\bf Example} Suppose $e$ is a simple loop in a graph, with exit
$f$ at the vertex $v$, and that there are no other loops, and no other
exits from $v$, as in Figure \ref{loop-with-exit}. Set
$$ g(v)=\lambda(e) g(v)+\lambda(f) g(r(f)).$$
Then $g(v)=\frac{\lambda(f)}{1-\lambda(e)}g(r(f))$.

\begin{figure}
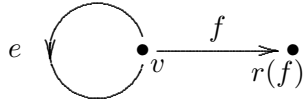

\[\qquad\qquad
\beginpicture

\setcoordinatesystem units <1cm,1cm>

\setplotarea x from 0 to 12, y from -0.9 to 0.5

\circulararc 325 degrees from 4.0 0.2 center at 3.4 0
\put{$\bullet$} at 4 0
\put{$\bullet$} at 6 0

\put{$e$} at 2.3 0
\put{$f$} at 5 0.3
\put{$v$} at 4.2 -0.2
\put{$r(f)$} at 5.8 -0.3

\arrow <0.25cm> [0.2,0.5] from 4.2 0 to 5.8 0

\arrow <0.25cm> [0.2,0.5] from 2.772 0.1 to 2.774 -0.1

\endpicture
\]
\caption{A loop with exit \label{loop-with-exit}}
\end{figure}

{\bf Remark} A graph weight is in fact specified by a single function
$h:E^*\to[0,\infty)$. For paths $v$ of length zero, i.e. vertices,
$h(v)=g(v)$ and for paths $\mu$ of length $k\geq 1$,
$h(\mu)=\lambda(\mu_1)\lambda(\mu_2)\cdots\lambda(\mu_k)$. We retain
the $(g,\lambda)$ notation but extend the definition of $\lambda$ by
$\lambda(\mu)=\prod_{i=1}^k\lambda(\mu_i)$.

Recall that a path $|\mu|$ has length $|\mu|\preceq k$ if $|\mu|=k$ or
$|\mu|<k$ and $r(\mu)$ is a sink.

We then have the following result, which can be proved by induction.

\begin{lemma} If $(g,\lambda)$ is a graph weight on $E$, then
$$ g(v)=\sum_{s(\mu)=v,\ |\mu|\preceq k}\lambda(\mu)g(r(\mu)),$$
where for a path $\mu=e_1\cdots e_j$, $j\leq k$,
$\lambda(\mu)=\prod\lambda(e_j)$.
\end{lemma}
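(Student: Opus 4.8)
The plan is to prove the identity by induction on $k\geq 1$, starting from the defining relation $g(v)=\sum_{s(e)=v}\lambda(e)g(r(e))$ and using the conventions already in force: a path of length $0$ is a vertex $w$ with $r(w)=w$, and $\lambda(w)=1$ as an empty product. Recall that $|\mu|\preceq k$ means $|\mu|=k$, or $|\mu|<k$ and $r(\mu)$ is a sink.

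For the base case $k=1$: if $v$ is a sink, the only path $\mu$ with $s(\mu)=v$ and $|\mu|\preceq 1$ is $\mu=v$ itself (length $0$, with $r(\mu)=v$ a sink), so the right-hand side equals $\lambda(v)g(v)=g(v)$ and the identity holds trivially. If $v$ is not a sink, then $|\mu|\preceq 1$ forces $|\mu|=1$, so $\mu$ ranges over the edges $e$ with $s(e)=v$, and the right-hand side is exactly $\sum_{s(e)=v}\lambda(e)g(r(e))=g(v)$ by the definition of a graph weight.

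For the inductive step, assume $g(v)=\sum_{s(\mu)=v,\;|\mu|\preceq k}\lambda(\mu)g(r(\mu))$ and split the terms according to whether $r(\mu)$ is a sink. If $r(\mu)$ is a sink, leave the term untouched, noting that $|\mu|\preceq k$ together with $r(\mu)$ a sink gives $|\mu|\le k<k+1$, hence $|\mu|\preceq k+1$. If $r(\mu)$ is not a sink, then necessarily $|\mu|=k$ (the other case of $\preceq k$ is excluded), so I substitute $g(r(\mu))=\sum_{s(e)=r(\mu)}\lambda(e)g(r(e))$ and use multiplicativity of $\lambda$ to get $\lambda(\mu)g(r(\mu))=\sum_{s(e)=r(\mu)}\lambda(\mu e)g(r(\mu e))$, a sum over paths $\mu e$ of length exactly $k+1$. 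I then check that the paths so produced are precisely the $\nu$ with $s(\nu)=v$ and $|\nu|\preceq k+1$: the untouched terms exhaust all $\nu$ with $|\nu|\le k$ and $r(\nu)$ a sink, and the substituted terms exhaust all $\nu$ with $|\nu|=k+1$, since such a $\nu$ has a unique length-$k$ prefix $\mu$ whose range $r(\mu)$ emits the last edge of $\nu$ and is therefore not a sink. The two families are disjoint (lengths $\le k$ versus $k+1$), each path occurs once, and the coefficient is $\lambda(\nu)$ in both cases; this gives $g(v)=\sum_{s(\nu)=v,\;|\nu|\preceq k+1}\lambda(\nu)g(r(\nu))$, completing the induction.

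The only delicate point, and the step I expect to require the most care, is the bookkeeping of the relation $\preceq$: one must confirm that every sink-terminating path of length $<k+1$ is already present and untouched before the substitution, that nothing is double-counted, and that every length-$(k+1)$ path with source $v$ arises exactly once from expanding a non-sink-terminating length-$k$ path. Once this combinatorial accounting is settled, the multiplicativity $\lambda(\mu e)=\lambda(\mu)\lambda(e)$ makes the coefficients match automatically, so there is no analytic or structural difficulty beyond it.
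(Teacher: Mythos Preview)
Your proof is correct and follows precisely the route the paper indicates: the paper does not actually write out a proof but simply states that the result ``can be proved by induction,'' and your argument supplies exactly that induction with all the bookkeeping for the relation $\preceq$ carried out carefully. There is nothing to add or correct.
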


We then define a functional $\phi_{g,\lambda}$ associated to a
graph weight $(g,\lambda)$ as follows.

\begin{defn} Given $(g,\lambda)$ on $E$ a graph weight, define
$\phi_{g,\lambda}:{\rm span}\{S_\mu S_\nu^*:\mu,\nu\in E^*\}\to\C$ by
$$ \phi_{g,\lambda}(S_\mu
S_\nu^*):=\delta_{\mu,\nu}\lambda(\nu)\phi_{g,\lambda}(p_{r(\nu)})
:=\lambda(\nu)\delta_{\mu,\nu}g(r(\nu)).$$
\end{defn}

This yields the following useful results.

\begin{prop}\label{pr:Hilbert-alg} 
Let $A_c={\rm span}\{S_\mu S_\nu^*:\mu,\nu\in
E^*\}$, and let $(g,\lambda)$ be a faithful graph weight on $E$. 
Then $A_c$ with the inner product
$$\la a,b\ra:=\phi_{g,\lambda}(a^*b)$$
is a modular Hilbert algebra (or Tomita algebra).
\end{prop}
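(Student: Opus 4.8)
We want to show that $A_c = \operatorname{span}\{S_\mu S_\nu^* : \mu,\nu \in E^*\}$, equipped with the inner product $\langle a,b\rangle := \phi_{g,\lambda}(a^*b)$, is a modular (Tomita) algebra. Recall that this means: $A_c$ is an involutive algebra with a non-degenerate inner product such that (i) left multiplication is bounded, i.e. $a \mapsto ab$ is continuous for each fixed $b$; (ii) the involution $\sharp$ is a closable conjugate-linear operator; (iii) there is a one-parameter group of algebra automorphisms $\Delta^{it}$ (the modular automorphisms) with $\langle \Delta a, b\rangle = \langle a, \Delta^{-1} b\rangle$ and $\Delta^{it}$ compatible with $\sharp$; and (iv) $A_c^2$ is dense in $A_c$ in the inner-product norm. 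The first thing I would do is check non-degeneracy: since $(g,\lambda)$ is faithful, $\phi_{g,\lambda}(p_v) = g(v) > 0$ for all $v$, and a short computation with the spanning elements $S_\mu S_\nu^*$ shows that distinct such elements (with appropriate range conditions) are orthogonal and have positive square-norm $\lambda(\mu)\lambda(\nu) g(r(\mu))$ when $\mu=\nu$-type conditions hold — more precisely one computes $\phi_{g,\lambda}((S_\alpha S_\beta^*)^* S_\mu S_\nu^*)$ using the product formula \eqref{eq:product} and finds it vanishes unless the elements are "aligned", giving positivity of the norm on a basis. Hence the form is an inner product.

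Next I would identify the modular structure explicitly. The natural candidate is to put a one-parameter group of automorphisms on $A_c$ by declaring $S_\mu S_\nu^*$ to be an eigenvector: define $\Delta^{it}(S_\mu S_\nu^*) = \big(\lambda(\mu)/\lambda(\nu)\big)^{it} S_\mu S_\nu^*$, equivalently the modular operator acts by $\Delta(S_\mu S_\nu^*) = (\lambda(\mu)/\lambda(\nu)) S_\mu S_\nu^*$ on the GNS-type space. One checks directly that this is an algebra automorphism (the eigenvalue is multiplicative under the product formula \eqref{eq:product}, since concatenation of paths multiplies the $\lambda$-weights and cancellation of a common segment divides them — the bookkeeping is exactly what makes $\lambda(\mu)=\prod\lambda(\mu_i)$ behave well), and that the involution $(S_\mu S_\nu^*)^\sharp = S_\nu S_\mu^*$ sends an eigenvector of eigenvalue $\lambda(\mu)/\lambda(\nu)$ to one of eigenvalue $\lambda(\nu)/\lambda(\mu)$, which is the required compatibility $\sharp \Delta \sharp = \Delta^{-1}$. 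The KMS-type identity $\langle ab,c\rangle = \langle b, \Delta^{-1}(a)^\sharp \cdot \text{(stuff)}\rangle$, or in the cleaner form $\langle \Delta a,b\rangle = \langle a,\Delta^{-1} b\rangle$, then follows by bilinearity once it is verified on the spanning eigenvectors, where both sides reduce to the same scalar times $\phi_{g,\lambda}$ of a diagonal term.

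For boundedness of left multiplication: for fixed $b \in A_c$, $b$ is a finite sum of $S_\mu S_\nu^*$, and using the Cuntz–Krieger relations together with the graph weight equation $g(v) = \sum_{s(e)=v}\lambda(e) g(r(e))$ (and its iterate from the Lemma preceding the Definition), one bounds $\|ab\|^2 = \phi_{g,\lambda}(b^* a^* a b)$ by $\|a\|^2_{C^*(E)}$ times a constant depending only on $b$ — here the graph weight equation is precisely the Perron–Frobenius-type compatibility that keeps $\phi_{g,\lambda}$ from blowing up under the expansion $p_v = \sum S_e S_e^*$. Density of $A_c^2$ in $A_c$ is immediate since $S_\mu S_\nu^* = S_\mu p_{r(\mu)} \cdot p_{r(\nu)} S_\nu^*$ (recovering a general generator as a product of two), or more carefully using $S_\mu S_\nu^* = (S_\mu S_\mu^*)(S_\mu S_\nu^*)$, so $A_c^2 = A_c$ as vector spaces. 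Closability of $\sharp$ follows from the existence of the dense domain together with the eigenvector description, since $\sharp$ and its formal adjoint (with respect to $\langle\cdot,\cdot\rangle$) both contain $A_c$ in their domains.

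The main obstacle, I expect, is the boundedness of left multiplication — condition (i) — because it is the one place where one genuinely needs the graph weight equation rather than just formal manipulation of the symbols $S_\mu S_\nu^*$; one must control the infinitely many terms that appear when a projection $p_{r(\nu)}$ sitting in the middle of a product $b^* a^* a b$ gets expanded via the Cuntz–Krieger relation, and show that the resulting (a priori infinite) sum converges and is dominated by $\|a\|^2$. The iterated form of the graph weight equation from the Lemma, applied at the relevant vertex, is exactly the estimate needed, but assembling it into a clean bound uniform in the approximation requires some care with the row-finiteness and the $\preceq k$ truncation. Everything else is eigenvector bookkeeping that is routine once the modular operator $\Delta$ is written down as above.
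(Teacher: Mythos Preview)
Your identification of the modular automorphisms $\sigma_z(S_\mu S_\nu^*)=(\lambda(\mu)/\lambda(\nu))^z S_\mu S_\nu^*$ and the eigenvector bookkeeping for the involution and the automorphism property match the paper's approach exactly, and the density argument $A_c^2=A_c$ is fine.

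There is, however, a genuine gap in your treatment of positivity and definiteness of the form. The spanning elements $S_\mu S_\nu^*$ are \emph{not} orthogonal: for instance $\langle p_v, S_eS_e^*\rangle=\phi_{g,\lambda}(S_eS_e^*)=\lambda(e)g(r(e))\neq 0$ whenever $s(e)=v$, and more generally the Cuntz--Krieger relations produce many such overlaps. So ``positive square-norm on a basis'' does not yield positive-definiteness of the sesquilinear form; a linear combination of non-orthogonal vectors of positive norm can easily have zero or negative norm. (Incidentally, the square-norm is $\lambda(\nu)g(r(\nu))$, not $\lambda(\mu)\lambda(\nu)g(r(\mu))$.) The paper handles this in two steps: positivity comes from observing that for any local unit $p\in A_c$, the map $b\mapsto \phi_{g,\lambda}(pbp)/\phi_{g,\lambda}(p)$ is a state on $pC^*(E)p$ and hence non-negative on $a^*a$; definiteness then requires a separate and more delicate argument, passing to the diagonal expectation $\Psi$, writing $\Psi(a^*a)=\sum c_\mu P_\mu-\sum c_\nu P_\nu$, and using the Cuntz--Krieger relations to re-expand this as a genuinely non-negative combination of range projections before applying $\phi_{g,\lambda}$.

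You have also misplaced the difficulty. Boundedness of left multiplication is \emph{not} the main obstacle and does not need the graph weight equation or any infinite expansion: once positivity of $\phi_{g,\lambda}$ is established as above, it is a one-line $C^*$-inequality,
\[
\langle ba,ba\rangle=\phi_{g,\lambda}(a^*b^*ba)\leq \Vert b^*b\Vert\,\phi_{g,\lambda}(a^*a)=\Vert b\Vert^2\langle a,a\rangle,
\]
since $b^*b\leq \Vert b\Vert^2$ and $\phi_{g,\lambda}$ preserves order on $A_c$. Your proposed route via Cuntz--Krieger expansions and the iterated weight equation is unnecessary here (and your bound in terms of $\Vert a\Vert_{C^*(E)}$ rather than $\langle a,a\rangle$ would not give continuity in the Hilbert norm anyway). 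The real work is in definiteness, which your sketch does not address.
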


\begin{proof} To complete the definition of modular Hilbert algebra,
we must supply a complex one parameter group of algebra automorphisms
$\s_z$ and verify a number of conditions set out in \cite{Ta}. So
for $z\in\C$ define
$$ \s_z(S_\mu
S_\nu^*)=\left(\frac{\lambda(\mu)}{\lambda(\nu)}\right)^zS_\mu
S_\nu^*.$$
Extending by linearity we can define $\s_z$ on all of $A_c$. To verify
the algebra automorphism property, it suffices to show that
$$\s_z(S_\mu S_\nu^* S_\rho S_\kappa^*)=\s_z(S_\mu S_\nu^*) \s_z(S_\rho
S_\kappa^*).$$
To do this we introduce some notation. If $\rho$ is a path we write
$\rho=\underline{\rho}\overline{\rho}$ where $\underline{\rho}$ is the
initial segment of $\rho$ (of length to be understood from context)
and $\overline{\rho}$ for the final segment.
First we compute the product on the left hand side.
$$ S_\mu S_\nu^* S_\rho S_\kappa^*=\left\{\begin{array}{ll}
\delta_{\nu,\underline{\rho}}S_\mu
S_{\overline{\rho}} S_\kappa^* & |\nu|\leq |\rho|\\
\delta_{\underline{\nu},\rho}S_\mu
S_{\overline{\nu}}^*S_\kappa^*&|\nu|\geq|\rho|\end{array}\right..$$
So
$$\s_z(S_\mu S_\nu^* S_\rho S_\kappa^*)= \left\{\begin{array}{ll}
\left(\frac{\lambda(\mu\overline{\rho})}{\lambda(\kappa)}\right)^z
\delta_{\nu,\underline{\rho}}S_\mu
S_{\overline{\rho}} S_\kappa^* & |\nu|\leq |\rho|\\
\left(\frac{\lambda(\mu)}{\lambda(\kappa\underline{\nu})}\right)^z
\delta_{\underline{\nu},\rho}S_\mu
S_{\overline{\nu}}^*S_\kappa^*&|\nu|\geq|\rho|\end{array}\right..$$
On the right hand side we have
\begin{align*}
\s_z(S_\mu S_\nu^*)\s_z( S_\rho S_\kappa^*) & =
\left(\frac{\lambda(\mu)\lambda(\rho)}{\lambda(\nu)\lambda(\kappa)}\right)^z
\left\{\begin{array}{ll} \delta_{\nu,\underline{\rho}}S_\mu
S_{\overline{\rho}} S_\kappa^*& |\nu|\leq|\rho|\\
\delta_{\underline{\nu},\rho} S_\mu S_{\overline{\nu}}^*S_\kappa^* &
|\nu|\geq |\rho|\end{array}\right.\nno
&=
\left\{\begin{array}{ll}
\left(\frac{\lambda(\mu)\lambda(\overline{\rho})}{\lambda(\kappa)}\right)^z
\delta_{\nu,\underline{\rho}}S_\mu
S_{\overline{\rho}} S_\kappa^*& |\nu|\leq|\rho|\\
\left(\frac{\lambda(\mu)}{\lambda(\overline{\nu})\lambda(\kappa)}\right)^z
\delta_{\underline{\nu},\rho} S_\mu S_{\overline{\nu}}^*S_\kappa^* &
|\nu|\geq |\rho|\end{array}\right.
\end{align*}
and this is easily seen to be the same as the left hand side whenever
the product is nonzero. Observe we have used the fact that
$$\lambda(\rho)=\lambda(\underline{\rho})\lambda(\overline{\rho}).$$

We need to show that $\la a,b\ra=\phi_{g,\lambda}(a^*b)$ does define an 
inner product. Let $a\in A_c$ and let $p\in A_c$ be a finite sum of vertex projections
such that $pa=ap=a$ ($p$ is a local unit for $a$). Then, since $g(v)>0$ for all $v\in E^0$, 
$$b\mapsto\frac{\phi_{g,\lambda}(pbp)}{\phi_{g,\lambda}(p)}$$
is a state on $pC^*(E)p$, and so positive. Hence
$$ \phi_{g,\lambda}(a^*a)\geq 0.$$

To show that the inner product is definite requires more care. First observe that 
if $\Psi:A_c\to \mbox{span}\{P_\mu=S_\mu S_\mu^*\}$ is the expectation on to 
the diagonal subalgebra, then 
$\phi_{g,\lambda}=\phi_{g,\lambda}\circ\Psi$. So we consider $a\in A_c$ and 
write $\Psi(a^*a)=\sum_\mu c_\mu P_\mu-\sum_\nu c_\nu P_\nu$. Here the 
$c_\mu,\,c_\nu>0$ and none of the paths $\mu$ is repeated in the sum. 
The average $\Psi(a^*a)$ is a positive
operator, so if $\Psi(a^*a)$ is
non-zero, all the $P_\nu$ in the negative part must be subprojections 
of $\sum_\mu P_\mu$ (otherwise $\Psi(a^*a)$ would have some negative spectrum). 
Since we are in a graph algebra, the Cuntz-Krieger relations tell us we can write
$$\sum_\mu P_\mu=\sum_\mu\sum_\rho P_{\mu\rho}$$
for some paths $\rho$ extending the various $\mu$, and that moreover all the $P_\nu$
appear as some $P_{\mu\rho}$. Thus
$$\Psi(a^*a)=\sum_\mu c_\mu P_\mu-\sum_\nu c_\nu P_\nu=
\sum_\mu c_\mu \sum_\rho P_{\mu\rho}-\sum_\nu c_\nu P_\nu=
\sum_\mu \sum_\rho d_{\mu\rho}P_{\mu\rho},$$
where the $d_{\mu\rho}$ are necessarily positive.
Now we can compute
$$\phi_{g,\lambda}(a^*a)=\phi_{g,\lambda}(\sum_\mu \sum_\rho d_{\mu\rho}P_{\mu\rho})
=\sum_\mu \sum_\rho d_{\mu\rho}\lambda(\mu\rho)g(r(\mu\rho))>0.$$

So now we come to verifying the various conditions defining a modular
Hilbert algebra. First, we need to consider the action of $A_c$ on
itself by left multiplication. This action is multiplicative,
$$\la ba,a\ra:=\phi_{g,\lambda}(a^*b^*a)=\la a,b^*a\ra,$$
and continuous
$$\la ba,ba\ra=\phi_{g,\lambda}(a^*b^*ba)\leq\Vert b^*b\Vert\la
a,a\ra,$$
where $\Vert\cdot\Vert$ denotes the $C^*$-norm coming from
$C^*(E)$. As $A_c^2=A_c$, the density of $A_c^2$ in $A_c$ is trivially
fulfilled. Also for all real $t$, $(1+\s_t)(S_\mu
S_\nu^*)=(1+(\lambda(\mu)/\lambda(\nu))^t)S_\mu S_\nu^*$, and so it is
an easy check to see that $(1+\s_t)(A_c)$ is dense in $A_c$ for all
real $t$. Also
$$ \la \s_{\bar{z}}(S_\mu S_\nu^*),S_\rho
S_\kappa^*\ra=\left(\frac{\lambda(\mu)}{\lambda(\nu)}\right)^{z}\la
S_\mu S_\nu^*,S_\rho S_\kappa^*\ra$$
is plainly analytic in $z$ (the reason for $\s_{\bar{z}}$ is that our
inner product is conjugate linear in the first variable). Since a
finite sum of analytic functions is analytic, $\la
\s_{\bar{z}}(a),b\ra$ is analytic for all $a,b\in A_c$.

The remaining items to check are the compatibility of $\s_z$ with the
inner product and involution, and all of these we can check for
monomials $S_\mu S_\nu^*$. The first item to check is
\begin{align*} \left(\s_z(S_\mu S_\nu^*)^*\right)&=
\left(\frac{\lambda(\mu)}{\lambda(\nu)}\right)^{\bar{z}}S_\nu S_\mu^*\nno
&=
\left(\frac{\lambda(\nu)}{\lambda(\mu)}\right)^{-\bar{z}}S_\nu S_\mu^*\nno
&= \s_{-\bar{z}}((S_\mu S_\nu^*)^*).
\end{align*}

Next we require $\la \s_z(a),b\ra=\la a,\s_{\bar{z}}(b)\ra$. So we
compute
\begin{align*} \la \s_z(S_\mu S_\nu^*),S_\rho S_\kappa^*\ra
&=\left(\frac{\lambda(\mu)}{\lambda(\nu)}\right)^{\bar{z}}g(r(\kappa))
\left\{\begin{array}{ll}
\delta_{\underline{\mu},\rho}
\delta_{\nu,\kappa\overline{\mu}}\lambda(\kappa\overline{\mu}) &
|\mu|\geq |\rho|\\
\delta_{\mu,\underline{\rho}}
\delta_{\nu\overline{\rho},\kappa}\lambda(\kappa) &
|\mu|\leq |\rho|\end{array}\right.\nno
&= g(r(\kappa))\left\{\begin{array}{ll}
\left(\frac{\lambda(\mu)}{\lambda(\kappa\overline{\mu})}\right)^{\bar{z}}
\delta_{\underline{\mu},\rho}
\delta_{\nu,\kappa\overline{\mu}}\lambda(\kappa\overline{\mu})
& |\mu|\geq|\rho|\\
\left(\frac{\lambda(\underline{\rho})}{\lambda(\nu)}\right)^{\bar{z}}
\delta_{\mu,\underline{\rho}}
\delta_{\nu\overline{\rho},\kappa}\lambda(\kappa) &
|\mu|\leq |\rho|\end{array}\right.\nno
&=g(r(\kappa))\left\{\begin{array}{ll}
\left(\frac{\lambda(\underline{\mu})}{\lambda(\kappa)}\right)^{\bar{z}}
\delta_{\underline{\mu},\rho}
\delta_{\nu,\kappa\overline{\mu}}\lambda(\kappa\overline{\mu})
& |\mu|\geq|\rho|\\
\left(\frac{\lambda(\underline{\rho})}{\lambda(\underline{\kappa})}\right)^{\bar{z}}
\delta_{\mu,\underline{\rho}}
\delta_{\nu\overline{\rho},\kappa}\lambda(\kappa) &
|\mu|\leq |\rho|\end{array}\right.\nno
&=g(r(\kappa))\left\{\begin{array}{ll}
\left(\frac{\lambda(\rho)}{\lambda(\kappa)}\right)^{\bar{z}}
\delta_{\underline{\mu},\rho}
\delta_{\nu,\kappa\overline{\mu}}\lambda(\kappa\overline{\mu})
& |\mu|\geq|\rho|\\
\left(\frac{\lambda(\underline{\rho})\lambda(\overline{\rho})}{\lambda(\underline{\kappa})\lambda(\overline{\rho})}\right)^{\bar{z}}
\delta_{\mu,\underline{\rho}}
\delta_{\nu\overline{\rho},\kappa}\lambda(\kappa) &
|\mu|\leq |\rho|\end{array}\right.\nno
&=\la S_\mu S_\nu^*,\s_{\bar{z}}(S_\rho S_\kappa^*)\ra,
\end{align*}
the last line following (when $|\mu|\leq|\rho|$) since the final
segments of $\rho$ and $\kappa$ must agree if the inner product is nonzero.
The final condition to check is that $\la \s_1(a^*),b^*\ra=\la
b,a\ra$.

First we compute
\begin{align*} \la \s_1(S_\mu S_\nu^*),S_\rho S_\kappa^*\ra &=
\frac{\lambda(\mu)}{\lambda(\nu)} \left\{\begin{array}{ll}
\delta_{\mu,\underline{\rho}} \delta_{\nu\overline{\rho},\kappa}
\lambda(\kappa) g(r(\kappa)) & |\mu|\leq|\rho|\\
\delta_{\underline{\mu},\rho} \delta_{\nu,\kappa\overline{\mu}}
\lambda(\kappa\overline{\mu}) g(r(\mu)) & |\mu|\geq
|\rho|\end{array}\right.\nno
&=\left\{\begin{array}{ll} \lambda(\rho)g(r(\kappa))
\delta_{\mu,\underline{\rho}} \delta_{\nu\overline{\rho},\kappa} &
|\mu|\leq|\rho| \\
\lambda(\mu)g(r(\mu))
\delta_{\underline{\mu},\rho} \delta_{\nu,\kappa\overline{\mu}} &
|\mu|\geq |\rho|\end{array}\right..
\end{align*}
Next we have
\begin{align*} \la S_\kappa S_\rho^*,S_\nu S_\mu^*\ra &=
\left\{\begin{array}{ll}
\delta_{\underline{\kappa},\nu}\delta_{\rho,\mu\overline{\kappa}}
\lambda(\mu\overline{\kappa}) g(r(\kappa)) & |\kappa|\geq|\nu|\\
\delta_{\kappa,\underline{\nu}} \delta_{\rho\overline{\nu},\mu}
\lambda(\mu) g(r(\mu)) & |\kappa|\leq|\nu|\end{array}\right.\nno
&=\left\{\begin{array}{ll} \lambda(\rho)g(r(\kappa))
\delta_{\underline{\kappa},\nu}\delta_{\rho,\mu\overline{\kappa}} &
|\kappa|\geq |\nu|\\
\lambda(\mu) g(r(\mu)) \delta_{\kappa,\underline{\nu}}
\delta_{\rho\overline{\nu},\mu} & |\kappa|\leq|\nu|\end{array}\right.
\end{align*}
Now for the inner product to be nonzero, we must have
$|\rho|+|\nu|=|\kappa|+|\mu|$, and so
$|\mu|\leq|\rho|\Leftrightarrow|\nu|\leq|\kappa|$. Comparing the
Kronecker deltas in the corresponding cases then yields the desired
equality for monomials, and the general case follows by linearity.
\end{proof}

\begin{thm}\label{thm:general-form}
Let $E$ be a locally finite directed graph. Then there is a one-to-one 
correspondence between gauge invariant norm lower semicontinuous faithful
semifinite functionals on $C^*(E)$ and faithful graph weights on $E$.
\end{thm}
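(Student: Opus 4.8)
The plan is to construct the correspondence in both directions and show they are mutually inverse. Given a faithful graph weight $(g,\lambda)$ on $E$, Proposition \ref{pr:Hilbert-alg} endows $A_c = \mathrm{span}\{S_\mu S_\nu^*\}$ with the structure of a modular Hilbert algebra under the inner product $\la a,b\ra = \phi_{g,\lambda}(a^*b)$. The standard Tomita--Takesaki machinery (as in \cite{Ta}) then produces a faithful semifinite normal weight on the von Neumann algebra $\pi(A_c)''$ acting on the GNS completion, and restricting/pulling back via the left regular representation gives a faithful semifinite functional $\varphi_{g,\lambda}$ on $C^*(E)$ extending $\phi_{g,\lambda}$. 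The modular automorphism group of this weight is exactly the one-parameter group $\s_t$ written down in the proof of Proposition \ref{pr:Hilbert-alg}, namely $\s_t(S_\mu S_\nu^*) = (\lambda(\mu)/\lambda(\nu))^{it} S_\mu S_\nu^*$; since each generator $S_\mu S_\nu^*$ is an eigenvector with eigenvalue of modulus one, the weight is invariant under this group, and in particular gauge invariance follows from observing that the gauge action is absorbed appropriately (the diagonal expectation $\Psi$ satisfies $\phi_{g,\lambda} = \phi_{g,\lambda}\circ\Psi$, and $\Psi$ is the conditional expectation onto the fixed points of a circle action contained in the centralizer). Norm lower semicontinuity is automatic for weights arising as suprema of increasing nets of functionals of the form $a \mapsto \varphi(p\, a\, p)$ with $p$ a finite sum of vertex projections, which is how the weight is built up from the local units.

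Conversely, given a gauge invariant, norm lower semicontinuous, faithful, semifinite functional $\varphi$ on $C^*(E)$, I would define $g(v) := \varphi(p_v)$ and recover $\lambda$ from the values of $\varphi$ on the partial isometries. Concretely, semifiniteness and faithfulness force $0 < g(v) < \infty$ for every vertex (lower semicontinuity plus the density of $A_c$ ensures the vertex projections are in the domain, at least after the standard argument that a gauge invariant semifinite weight is finite on each $p_v$; if some $p_v$ had infinite weight one uses the Cuntz--Krieger relation to derive a contradiction with semifiniteness on the corners). For an edge $e$ with $s(e)=v$, $r(e)=w$, set $\lambda(e) := \varphi(S_e p_w S_e^*)/g(w) = \varphi(S_e S_e^*)/g(w)$ when $g(w)\neq 0$. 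Gauge invariance of $\varphi$ forces $\varphi(S_\mu S_\nu^*) = 0$ unless $|\mu| = |\nu|$, and a KMS-type argument (the weight being a KMS weight for the trivial dynamics in the gauge-invariant case, or more simply using that $\varphi$ is a trace-like functional on the diagonal) forces $\varphi(S_\mu S_\nu^*) = \delta_{\mu,\nu}\lambda(\nu) g(r(\nu))$, so that $\varphi = \phi_{g,\lambda}$ on $A_c$. The graph weight equation $g(v) = \sum_{s(e)=v}\lambda(e)g(r(e))$ is then precisely the statement that $\varphi$ respects the Cuntz--Krieger relation $p_v = \sum_{s(e)=v} S_e S_e^*$, applying $\varphi$ to both sides (the sum being finite by row-finiteness, with convergence controlled by lower semicontinuity).

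Then I would verify the two constructions are inverse to each other. Starting from $(g,\lambda)$, forming $\varphi_{g,\lambda}$, and reading off the new graph weight recovers $g(v) = \varphi_{g,\lambda}(p_v)$ and $\lambda(e) = \varphi_{g,\lambda}(S_e S_e^*)/g(r(e)) = \lambda(e)$ by the defining formula for $\phi_{g,\lambda}$; conversely, starting from $\varphi$, extracting $(g,\lambda)$, and re-forming the weight yields a functional agreeing with $\varphi$ on the dense $*$-subalgebra $A_c$, hence on all of $C^*(E)$ after checking the semifinite lower semicontinuous extension is unique (two norm lower semicontinuous semifinite weights agreeing on a common core agree everywhere — this is a standard fact, using that lower semicontinuity pins down the weight as the supremum over the associated increasing net).

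The main obstacle I anticipate is the passage from the algebraic functional $\phi_{g,\lambda}$ on $A_c$ to a genuine norm lower semicontinuous semifinite functional on the $C^*$-algebra, and the uniqueness of that extension: one must check that $\phi_{g,\lambda}$ is closable in the right sense and that the resulting weight is semifinite (its domain of definition is large enough), which requires controlling the interaction of the infinitely many vertex projections, especially for non-unital $C^*(E)$. The modular Hilbert algebra structure from Proposition \ref{pr:Hilbert-alg} handles the closability and produces the modular group for free, but verifying semifiniteness means exhibiting enough elements $a \in A_c$ with $\phi_{g,\lambda}(a^*a) < \infty$ and $\phi_{g,\lambda}$-dense range — here the local units $p$ (finite sums of vertex projections) are the key, and one needs $\phi_{g,\lambda}(p) < \infty$, which is where faithfulness of the graph weight ($g(v) > 0$, but crucially also each $g(v) < \infty$ by definition of $g: E^0 \to [0,\infty)$) enters. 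The gauge invariance statement needs care because the zhyvot/gauge distinction in the earlier sections means one should be explicit that here we use the full gauge action, and that the modular group $\s_t$ commuting with the gauge action is what makes the weight gauge invariant.
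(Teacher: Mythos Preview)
Your outline follows the paper's approach closely: use Proposition~\ref{pr:Hilbert-alg} to get a modular Hilbert algebra, invoke Takesaki's machinery to produce a faithful normal semifinite weight on $\pi(A_c)''$, restrict to $C^*(E)$, and in the other direction read off $g(v)=\phi(p_v)$ and $\lambda(e)=\phi(S_eS_e^*)/\phi(S_e^*S_e)$. The converse direction and the mutual-inverse verification you sketch are more explicit than the paper, which simply cites the analogous tracial argument in \cite{PRen}.

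There is, however, a genuine gap in your forward direction: you assert that pulling back the von Neumann weight along $\pi$ gives a \emph{faithful} functional on $C^*(E)$, but you never justify why the extension of $\pi$ from $A_c$ to $C^*(E)$ is faithful. Density of $A_c$ and boundedness of left multiplication give you a representation of $C^*(E)$ on the GNS space, but faithfulness does not follow for free. The paper handles this by first showing that the gauge action is unitarily implemented on the GNS Hilbert space (because $\phi_{g,\lambda}$ is gauge invariant at the algebraic level), and then invoking the \emph{gauge invariant uniqueness theorem} of \cite{BPRS}: since $\pi$ carries a nonzero Cuntz--Krieger family and intertwines a gauge action, it is automatically faithful on $C^*(E)$. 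Without this step, the weight you restrict to $C^*(E)$ could in principle have a kernel.

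A secondary point: your argument for gauge invariance of the \emph{extended} weight via $\phi_{g,\lambda}=\phi_{g,\lambda}\circ\Psi$ is only at the level of $A_c$. The paper instead verifies gauge invariance of the von Neumann weight directly, using the description of the domain in terms of left-bounded vectors and the unitaries $U_z$ implementing $\gamma_z$; this is needed because the Takesaki extension is defined abstractly and its gauge invariance is not immediate from the algebraic formula.
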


\begin{proof} This is proved similarly to \cite[Proposition 3.9]{PRen} where the tracial case 
is considered. 

First  suppose that $(g,\lambda)$ is a faithful graph weight on $E$. Then 
$(A_c,\,\phi_{g,\lambda})$ is a modular Hilbert algebra.  
Since the left representation of $A_c$ on itself
is faithful, each $p_v$, $v\in E^0$, is represented by a non-zero projection. 
Let the representation be $\pi$.

The gauge invariance of $\phi_{g,\lambda}$ shows that for all $z\in\T$, the map 
$\gamma_z:A_c\to A_c$ extends to a unitary $U_z:\HH\to\HH$, where 
$\HH$ is the completion
of $A_c$ in the Hilbert space norm. It is easy to show that $U_z\pi(a)U_{\bar{z}}(b)=
\pi(\gamma_z(a))(b)$ for $a,\,b\in A_c$. Hence $U_z\pi(a)U_{\bar{z}}=
\pi(\gamma_z(a))$ and so $\alpha_z(\pi(a)):=U_z\pi(a)U_{\bar{z}}$ gives a 
point norm continuous action of $\T$ on $\pi(A_c)$ implementing the gauge action. 

We may thus invoke the gauge
invariant uniqueness theorem \cite{BPRS} 
to deduce that the representation extends to a faithful
representation of $C^*(E)$. 

Now $\pi(C^*(E))\subset \pi(A_c)''=\overline{\pi(A_c)}^{u.w.}$, the ultra-weak closure. 
Then
\cite[Theorem 2.5]{Ta} shows that the functional
$\phi_{g,\lambda}$ extends to a faithful, normal semifinite weight 
$\psi_{g,\lambda}$ on the 
left von Neumann algebra of $A_c$, $\pi(A_c)''$. 

Restricting the extension $\psi_{g,\lambda}$ to $C^*(E)$ gives a faithful weight. It is
norm semifinite since it is defined on $A_c$ which is dense in $C^*(E)$. Finally, if 
$a_j\to a$ in norm, then the $a_j$ converge ultra-weakly as well, so
$\lim\inf\psi_{g,\lambda}(a_j)\geq \psi_{g,\lambda}(a)$, which shows that the restriction
of $\psi_{g,\lambda}$ to $C^*(E)$ is norm lower semicontinuous. 

To get the gauge invariance of $\psi_{g,\lambda}$ we recall that $T\in\pi(A_c)''$ is 
in the domain of $\psi_{g,\lambda}$ if and only if $T=\pi(\xi)\pi(\eta)^*$ for 
left bounded elements $\xi,\,\eta\in\HH$. Then $\psi_{g,\lambda}(T)=
\psi_{g,\lambda}(\pi(\xi)\pi(\eta)^*):=\langle \xi,\eta\rangle$. As $U_z\xi$ and $U_z\eta$
are also left bounded we have
\begin{align*} \psi_{g,\lambda}(U_zTU_{\bar{z}})&=
\psi_{g,\lambda}(U_z\pi(\xi)\pi(\eta)^*U_{\bar{z}})=
\psi_{g,\lambda}(U_z\pi(\xi)(U_z\pi(\eta))^*)\\
&=\psi_{g,\lambda}(\pi(\gamma_z(\xi))\pi(\gamma_z(\eta))^*)=
\langle U_z\xi,U_z\eta\rangle\\
&=\langle \xi,\eta\rangle=\psi_{g,\lambda}(T).
\end{align*}
So $\psi_{g,\lambda}$ is $\alpha_z$ invariant, and $a\mapsto\psi_{g,\lambda}(\pi(a))$
defines a faithful semifinite norm lower semicontinuous gauge invariant weight on
$C^*(E)$.

Conversely, suppose that $\phi$ is a faithful semifinite norm lower 
semicontinuous weight on $C^*(E)$ which is gauge invariant. Define
$$g(v)=\phi(p_v),\qquad \lambda(e)=\frac{\phi(S_e S_e^*)}{\phi(S_e^*S_e)}.$$
It is readily checked that $(g,\lambda)$ is a faithful graph weight.
\end{proof}

In order to make contact with the index theory for KMS weights set out in
\cite{CNNR}, we require the action associated to our graph weight to be 
a circle action satisfying the spectral subspace condition, namely that $A_kA_k^*$ 
should be complemented 
in the fixed point algebra $F$.

A sufficient condition to obtain a circle action is that
$\lambda(e)=\lambda^{n_e}$ for every edge $e\in E^1$, where now
$n:E^1\to\Z$, and $\lambda\in (0,1)$.
In fact we will simplify matters further and deal here just with a
function of the form $n_e\in\{0,1\}$ for all $e\in E^1$. While this is rather restrictive, it
suffices for the examples we consider here. We call such functions {\em special 
graph weights}. In order for our special graph weight to accurately reflect the properties
of the zhyvot action on our graph, we will also require that $n_e=1$ if and only if $e\in M^1$. 

Also all the graphs we wish to consider are graphs with {\em finite} zhyvots, 
with the rest
of the graph being composed of 
trees. Since it is easy to construct faithful graph traces (i.e. special graph weights
with $n_e\equiv 0$) on (unions of)
trees given just the values of the trace on the root(s),
\cite{PRen}, it seems we need only worry about constructing a  graph state on the zhyvot. 

However, there is a subtlety: 
{\em neglecting the trees can affect the existence of special graph weights.}

{\bf Example} Graph  states on $SU_q(2)$. Recall that for $0\leq q<1$
the $C^*$-algebra $SU_q(2)$ is (isomorphic to) the graph $C^*$-algebra
of the  graph in Figure \ref{SUq2Fig}, \cite{HS}.

\begin{figure}
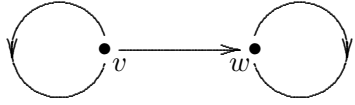

\[\qquad\qquad
\beginpicture

\setcoordinatesystem units <1cm,1cm>

\setplotarea x from 0 to 12, y from -0.9 to 0.5

\circulararc 325 degrees from 4.0 0.2 center at 3.4 0
\put{$\bullet$} at 4 0
\put{$\bullet$} at 6 0

\put{$v$} at 4.2 -0.2
\put{$w$} at 5.8 -0.2

\circulararc -325 degrees from 6 0.2 center at 6.6 0

\arrow <0.25cm> [0.2,0.5] from 4.2 0 to 5.8 0

\arrow <0.25cm> [0.2,0.5] from 7.228 0.1 to 7.226 -0.1

\arrow <0.25cm> [0.2,0.5] from 2.772 0.1 to 2.774 -0.1

\endpicture
\]
\caption{Graph of $SU_q(2)$ \label{SUq2Fig}}
\end{figure}

We want to solve
$$ g(v)=\lambda^{n_1}g(v)+\lambda^{n_2}g(w),\ \
g(w)=\lambda^{n_3}g(w).$$
First $\lambda^{n_3}=1$, so for $\lambda\neq 1$ (which we aren't
interested in), $n_3=0$. Then
$$ g(v)=\frac{\lambda^{n_2}}{1-\lambda^{n_1}}g(w).$$
Imposing the requirement that we have a graph state, $g(v)+g(w)=1$, we
get
$$ g(v)=\frac{\lambda^{n_2}}{1-\lambda^{n_1}+\lambda^{n_2}},\ \ \
g(w)=\frac{1-\lambda^{n_1}}{1-\lambda^{n_1}+\lambda^{n_2}} .$$
Observe that if $n_1=n_2$ we have
$$g(v)=1-\lambda^{n_1},\ \ g(w)=\lambda^{n_1}.$$
In this case we get the Haar state by setting $\lambda=q^{2/n_1}$, \cite{CRT}.

Observe that for $\lambda=1$ the only nonzero graph trace vanishes on
$w$, and we get the usual trace on the top circle with the kernel of 
$\phi_g=C(S^1)\otimes\K$.  For $\lambda>1$,
we get the same family as before by replacing $(n_1,n_2)$ by
$(-n_1,-n_2)$.
For a special graph state we must have $n_1=1$ and $n_3=0$. For $n_2$ we may choose 
either value.

So it seems we can not obtain a special graph weight with $n_e=1$ for all edges in the 
zhyvot. However, if we add trees to the graph, the loop on the vertex $w$ will acquire exits,
and then it is easy to construct special graph weights with $n_e=1$ precisely when 
$e$ is an edge in the zhyvot. 

\begin{lemma} Let $M$ be a finite graph and label the vertices $v_1,\dots,v_n$ 
so that the sinks, if any, are $v_{r+1},\dots,v_n$. Let $p_{jk}\in\N\cup\{0\}$ be the 
number of edges from $v_j$ to $v_k$.
Then $M$ has a faithful special 
graph state $(g,\lambda,n)$ 
for $\lambda\in(0,1)$ and $n:E^1\to \{1\}\subset \N$ if and only if the matrix
$$\begin{pmatrix} (\lambda p_{jk})_{r\times r} & (\lambda p_{jk})_{r\times n-r}\\
0_{n-r\times r} & Id_{n-r\times n-r}\end{pmatrix}$$
has an eigenvector $(x_1,\dots,x_n)^T$ with eigenvalue $1$ and $x_j>0$ for $j=1,\dots,n$.
\end{lemma}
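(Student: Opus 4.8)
The plan is to translate the defining equations of a special graph state directly into the eigenvector condition and then check both implications. Recall that for a special graph weight with $n_e \equiv 1$ on all edges of $M$ and parameter $\lambda\in(0,1)$, we have $\lambda(e)=\lambda$ for every edge, so the defining equation $g(v)=\sum_{s(e)=v}\lambda(e)g(r(e))$ of Definition \ref{grapwdef} becomes, at a non-sink vertex $v_j$,
$$ g(v_j)=\lambda\sum_{e:\,s(e)=v_j} g(r(e))=\lambda\sum_{k=1}^n p_{jk}\,g(v_k),$$
while at a sink $v_j$ (with $j>r$) there is no equation, or equivalently the trivial identity $g(v_j)=g(v_j)$. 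Setting $x_j:=g(v_j)$, these are precisely the statements that the first $r$ rows of the displayed block matrix, applied to $(x_1,\dots,x_n)^T$, return $x_1,\dots,x_r$, and the last $n-r$ rows return $x_{r+1},\dots,x_n$ automatically. Thus $(x_1,\dots,x_n)^T$ is a fixed vector of the block matrix, i.e. an eigenvector with eigenvalue $1$.

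First I would prove the forward direction: given a faithful special graph state $(g,\lambda,n)$, set $x_j=g(v_j)$. Faithfulness gives $x_j>0$ for all $j$, and the computation above shows the vector is fixed by the block matrix. (The graph-state normalization $\sum x_j=1$ is not needed for the eigenvector statement and can be recovered afterward by rescaling, since the eigenspace is a cone.) Conversely, given an eigenvector $(x_1,\dots,x_n)^T$ with eigenvalue $1$ and all $x_j>0$, define $g(v_j):=x_j/\sum_i x_i$ so that $\sum_j g(v_j)=1$, put $\lambda(e)=\lambda$, $n_e=1$ for every $e\in M^1$. Reading the first $r$ rows of the eigenvalue equation backwards gives exactly $g(v_j)=\sum_{s(e)=v_j}\lambda(e)g(r(e))$ for the non-sink vertices, and the sink vertices impose no condition; hence $(g,\lambda,n)$ is a graph weight, it is a state by construction, and it is faithful because all $x_j>0$. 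The last $n-r$ rows being the identity block is exactly the bookkeeping fact that sinks carry no constraint, so the block form of the matrix is forced rather than assumed.

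I do not expect a serious obstacle here: the statement is essentially a dictionary between the vertex equations and a linear-algebra fixed-point condition, and the only points requiring a word of care are (i) that sinks of $M$ contribute no equation, which is why the bottom-right block is $Id$ and the bottom-left block is $0$, and (ii) the distinction between a graph weight (a ray of solutions) and a graph state (the normalized representative), handled by rescaling since eigenvectors for a fixed eigenvalue form a linear subspace and positivity is preserved under positive scaling. One should also note that the hypothesis $\lambda\in(0,1)$ plays no role in the equivalence itself — it is inherited from the ambient definition of a special graph weight — so the proof is uniform in $\lambda$.
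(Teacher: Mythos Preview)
Your proposal is correct and follows essentially the same approach as the paper: the paper's proof simply writes down the graph-weight equations at non-sinks and the trivial identity at sinks, observes these are exactly the eigenvalue-$1$ condition for the displayed block matrix, and then normalises to obtain a state. Your version is more explicit about the two implications and the bookkeeping for sinks, but the underlying argument is identical.
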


\begin{proof} The equations defining a special graph weight for $\lambda\in (0,1)$ are
$$g(v_j)=\sum_{k=1}^n\lambda p_{jk}g(v_k)\ \ \ j=1,\dots,r,\qquad 
g(v_j)=\sum_{k=1}^n\delta_{jk}g(v_k)\ \ \ j=r+1,\dots,n.$$
This gives the necessary and sufficient condition for the existence of a special 
graph weight $\tilde{g}$ 
with $\tilde{g}(v_j)=x_j$. To get a state we normalise the eigenvector.
\end{proof}

The lemma can obviously be generalized to deal with general graph states on
finite graphs. Moreover we note that work in progress is extending the modular index 
theory to quasi-periodic actions of $\R$, and a modified version of the above lemma will give 
existence criteria in the quasi-periodic case also.

\begin{cor}\label{cr:useful-statement} 
Let $E$ be a locally finite directed graph without sources, and with finite 
zhyvot 
$M\subset E$. Let $(g,\lambda,n)$ be a special graph weight on $E$ for $\lambda\in(0,1)$,
$n|_{M^1}\equiv 1$ and $n|_{E^1\setminus M^1}\equiv 0$.
Then $\phi_{g,\lambda}$ extends to a
positive norm lower semi-continuous gauge invariant (usual gauge
action) functional on $C^*(E)$. The
functional $\phi_{g,\lambda}$ is faithful iff $(g,\lambda)$ is
faithful. We have the formula
$$ \phi_{g,\lambda}(ab)=\phi_{g,\lambda}(\s(b)a),\ \ \ a,b\in
A_c,$$
where 
$\s(S_\mu S_\nu^*)=\frac{\lambda(\nu)}{\lambda(\mu)}S_\mu S_\nu^*$ is a
densely defined regular automorphism of $C^*(E)$. In particular, $\phi_{g,\lambda}$
is a KMS weight on $C^*(E)$ for the (modified) zhyvot action
$$\s_t(S_\mu S_\nu^*)=\left(\frac{\lambda(\mu)}{\lambda(\nu)}\right)^{it}S_\mu S_\nu^*
=\lambda^{(|\mu|_\s-|\nu|_\s)it}S_\mu S_\nu^*.$$
\end{cor}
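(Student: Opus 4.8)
The plan is to assemble this corollary from the pieces already in place. First I would note that a special graph weight $(g,\lambda,n)$ with $n|_{M^1}\equiv 1$ and $n|_{E^1\setminus M^1}\equiv 0$ has $\lambda(\mu)=\lambda^{|\mu|_\s}$ for every path $\mu$, since $|\mu|_\s$ counts precisely the edges of $\mu$ lying in $M^1$; this is what ties the functional $\phi_{g,\lambda}$ to the zhyvot action rather than the full gauge action. With this identification, $\phi_{g,\lambda}(S_\mu S_\nu^*)=\delta_{\mu,\nu}\lambda^{|\nu|_\s}g(r(\nu))$ depends only on the $\s$-degrees, which already makes the zhyvot-gauge invariance manifest on $A_c$.

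Next I would invoke Proposition \ref{pr:Hilbert-alg}: $(A_c,\phi_{g,\lambda})$ is a modular Hilbert algebra with modular automorphism group $\s_z(S_\mu S_\nu^*)=(\lambda(\mu)/\lambda(\nu))^z S_\mu S_\nu^*=\lambda^{(|\mu|_\s-|\nu|_\s)z}S_\mu S_\nu^*$, which is exactly the (complexified) modified zhyvot action. Then, following the pattern of the proof of Theorem \ref{thm:general-form} (with the roles of the gauge action there played here by the zhyvot action, and using the gauge-invariant uniqueness theorem of \cite{BPRS} applied to the zhyvot circle action in place of the usual one), the functional $\phi_{g,\lambda}$ on $A_c$ extends to a faithful normal semifinite weight on the left von Neumann algebra $\pi(A_c)''$ via \cite[Theorem 2.5]{Ta}, and restricts to a norm lower semicontinuous semifinite weight $\psi_{g,\lambda}$ on $C^*(E)$. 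Norm semifiniteness is automatic because $A_c$ is a norm-dense subalgebra on which $\phi_{g,\lambda}$ is finite; faithfulness on $C^*(E)$ holds iff $g(v)\neq 0$ for all $v$, i.e. iff $(g,\lambda)$ is faithful; and lower semicontinuity follows, as in Theorem \ref{thm:general-form}, from the fact that norm convergence implies ultraweak convergence so $\liminf \psi_{g,\lambda}(a_j)\geq \psi_{g,\lambda}(a)$. The invariance under the zhyvot circle action is checked just as the $\alpha_z$-invariance of $\psi_{g,\lambda}$ was checked in Theorem \ref{thm:general-form}, using that domain elements have the form $\pi(\xi)\pi(\eta)^*$ and that $U_z$ (now the unitary implementing the zhyvot action) preserves left-boundedness.

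Finally, for the KMS property I would verify the twisted trace identity $\phi_{g,\lambda}(ab)=\phi_{g,\lambda}(\s(b)a)$ directly on generators $a=S_\mu S_\nu^*$, $b=S_\rho S_\kappa^*$, using the product formula \eqref{eq:product} exactly as in the verification of the final condition $\la\s_1(a^*),b^*\ra=\la b,a\ra$ inside the proof of Proposition \ref{pr:Hilbert-alg}; there the computation showing $|\mu|\leq|\rho|\Leftrightarrow|\nu|\leq|\kappa|$ when the relevant Kronecker deltas are nonzero, together with $\lambda(\rho)=\lambda(\underline\rho)\lambda(\overline\rho)$, is precisely what makes the two sides agree. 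That $\s(S_\mu S_\nu^*)=(\lambda(\nu)/\lambda(\mu))S_\mu S_\nu^*$ is a densely defined regular automorphism of $C^*(E)$, with analytic extension $\s_z$, is again the modular structure from Proposition \ref{pr:Hilbert-alg}; standard Tomita--Takesaki theory (Takesaki \cite{Ta}) then gives that a faithful normal semifinite weight whose modular group is $\s_t$ satisfies the KMS condition at inverse temperature one for $\s_t$, and restricting to $C^*(E)$ preserves this. I expect the main obstacle to be bookkeeping rather than conceptual: one must be careful that the gauge-invariant uniqueness theorem applies to the zhyvot action (which requires the spectral-subspace structure established in Proposition \ref{pr:global-assumption}, i.e. that the zhyvot action is a genuine well-behaved circle action with the expected fixed-point algebra), and that the extension from the modular-algebra weight on $\pi(A_c)''$ back down to $C^*(E)$ really does land inside the multiplier-algebra framework of \cite{CNNR} with the stated regularity of $\s$.
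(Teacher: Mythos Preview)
Your proposal is correct and uses the same two ingredients as the paper---Proposition~\ref{pr:Hilbert-alg} for the twisted trace formula and the machinery behind Theorem~\ref{thm:general-form} for the extension to $C^*(E)$---but you take an unnecessary detour. The paper's proof is two sentences: it simply \emph{cites} Theorem~\ref{thm:general-form} directly, because a special graph weight is in particular a graph weight, so that theorem already hands you the faithful, norm lower semicontinuous, gauge-invariant (for the \emph{usual} gauge action) extension $\psi_{g,\lambda}$ on $C^*(E)$; and it cites Proposition~\ref{pr:Hilbert-alg} for $\phi_{g,\lambda}(ab)=\phi_{g,\lambda}(\s(b)a)$.

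By contrast, you re-run the proof of Theorem~\ref{thm:general-form} with the zhyvot action substituted for the gauge action, and worry about whether gauge-invariant uniqueness applies to the zhyvot action. This is not needed: the corollary asserts invariance under the \emph{usual} gauge action (note the parenthetical ``usual gauge action'' in the statement), and that is exactly what Theorem~\ref{thm:general-form} delivers. The zhyvot action enters only afterwards, to identify the modular group $\s_z$ of Proposition~\ref{pr:Hilbert-alg} with the modified zhyvot action via your observation $\lambda(\mu)=\lambda^{|\mu|_\s}$. So your argument works, but the cleaner route is to recognize that nothing new has to be proved here---the corollary is an immediate assembly of results already established.
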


\begin{proof} 
The formula $\phi_{g,\lambda}(ab)=\phi_{g,\lambda}(\s(b)a)$ follows from 
Proposition \ref{pr:Hilbert-alg}. Together with the norm lower semicontinuity and the 
gauge invariance coming from Theorem \ref{thm:general-form}, 
we see that $\phi_{g,\lambda}$ is a KMS weight on $C^*(E)$.
\end{proof}

\subsection{The effect of field extensions}\label{fieldext2sec}

Suppose that we start with the infinite graph $\Delta_{\mathbb K}/\Gamma$ and
we pass to the graph $\Delta_{\mathbb L}/\Gamma$, for ${\mathbb L}$ a finite extension
of ${\mathbb K}$, by the procedure described in Section \ref{fieldextsect}. As
we have seen, this procedure consists of inserting $e_{{\mathbb L}/{\mathbb K}}-1$ new
vertices along edges and attaching infinite trees to the old and new
vertices, so that the resulting valence of all vertices is the
desired $q^f+1$.

Here we show that, if we have constructed a special graph weight for
$\Delta_{\mathbb K}/\Gamma$, then we obtain corresponding special graph
weights on all the $\Delta_{\mathbb L}/\Gamma$ for finite extensions ${\mathbb L}\supset
{\mathbb K}$. The special graph weight for $\Delta_{\mathbb L}/\Gamma$ is obtained from
that of $\Delta_{\mathbb K}/\Gamma$ by solving explicit equations.

\begin{prop} Let $E$ be a locally finite directed graph with no sources and with 
finite zhyvot $M$. Suppose that $(g,\lambda,n)$ is a faithful special graph weight on $E$
with $n|_M\equiv 1$ and $n|_{E\setminus M}\equiv 0$, and $\lambda\in (0,1)$. 
Let $F$ be the graph obtained from $E$ by inserting some new vertices along edges 
of $M$ and attaching any positive 
number of trees to the new vertices and any number of trees to the vertices
of $E$. Then $F$ has finite zhyvot $\tilde{M}$, with
$\tilde{M}^0=\{v\in F^0: v\in M^0\ \mbox{or}\ v=r(e),\ e\in F^1,\ s(e)\in M^0\}$ and
$\tilde{M}^1=\{e\in F^1:r(e)\in\tilde{M}^0\}$,
and a faithful special graph weight 
$(\tilde{g},\lambda,\tilde{n})$ with the same 
value of $\lambda$ and $\tilde{n}|_{\tilde{M}}\equiv 1$, 
$\tilde{n}|_{F\setminus\tilde{M}}\equiv 0$.
\end{prop}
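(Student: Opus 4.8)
The plan is to build the new graph weight $(\tilde g,\lambda,\tilde n)$ explicitly, propagating the values of $(g,\lambda)$ from $E$ into the inserted vertices and the attached trees, and then to verify the zhyvot axioms for $\tilde M$ and the graph weight equation at every vertex of $F$. First I would set up notation: an original edge $e\in M^1$ of $E$ with $s(e)=v$, $r(e)=w$ gets subdivided into a chain $e=e^{(1)}e^{(2)}\cdots e^{(m+1)}$ through $m=m_e$ new vertices $u_1,\dots,u_m$; by hypothesis $n|_{M^1}\equiv 1$ contributed a factor $\lambda$ to the old equation $g(v)=\sum_{s(\bar e)=v}\lambda^{n_{\bar e}}g(r(\bar e))$, and the subdivided chain must reproduce this single factor $\lambda$ while each of its $m+1$ constituent edges carries $\tilde n=1$. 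So I would solve $\lambda g(w)=\lambda^{m+1}\tilde g(u_1)$ together with the intermediate relations $\tilde g(u_i)=\lambda\,\tilde g(u_{i+1})$ (where $u_{m+1}:=w$) plus the requirement that $u_i$ emit, besides $e^{(i+1)}$, the roots of its attached trees; this forces $\tilde g(u_i)=\lambda^{\,i-m-1}g(w)$ adjusted by the tree contributions, and then the tree-root values are determined by subtracting. Concretely, at $u_i$ the equation reads $\tilde g(u_i)=\tilde g(u_{i+1})+\sum_{\text{tree roots }t}\lambda^{0}\tilde g(t)$ (trees carry $\tilde n\equiv 0$ since they lie outside $\tilde M$), so I choose $\tilde g(u_i):=\lambda^{-1}\tilde g(u_{i+1})$ if the balance is to work — I will pin down the exact recursion in the writeup, but the point is that it is a finite triangular linear system with a unique strictly positive solution once $\lambda\in(0,1)$, because each new coefficient is a positive power of $\lambda$ and the ``source'' terms $g(w)$ are already positive by faithfulness of $(g,\lambda)$.

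Second, once the values on $\tilde M^0$ are fixed, I would extend $\tilde g$ over each attached tree. This is exactly the situation handled in \cite{PRen}: on a tree (or a union of trees) with prescribed positive value at the root, and with all edge labels equal to $1$ (here $\tilde n\equiv 0$ on $F\setminus\tilde M$, so $\lambda^{\tilde n}=1$), one constructs a faithful graph trace by working outward from the root, dividing the value at each vertex among its successors; local finiteness guarantees the sums are finite and faithfulness is automatic. I would invoke that construction verbatim for the trees hanging off the new vertices $u_i$ and for the trees hanging off the old vertices of $E^0$; for the latter the root value is just the already-known $g(v)$.

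Third, I would verify the two zhyvot axioms for $\tilde M\subset F$. Axiom (2) — that a path with source outside $\tilde M$ has range outside $\tilde M$ — is immediate from the definition $\tilde M^1=\{e:r(e)\in\tilde M^0\}$ together with the fact that the only edges into $\tilde M^0$ originate in $M^0\subset\tilde M^0$ (the subdivided edges $e^{(i)}$ for $i\ge 2$ have source $u_{i-1}\in\tilde M^0$; the tree edges run outward). Axiom (1) — every vertex of $F$ is downstream of $\tilde M$ — follows because every vertex of $E$ was downstream of $M$ in $E$ (and subdivision does not break reachability), every new vertex $u_i$ is the range of a path from $v\in M^0$, and every tree vertex is downstream of its root, which lies in $E^0\cup\{u_i\}\subseteq$ vertices reachable from $\tilde M$. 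Finiteness of $\tilde M$ is clear since $M$ is finite and only finitely many vertices $u_i$ are inserted (each original edge of $M$ is subdivided into finitely many pieces). That $\tilde g$ solves the graph weight equation at the old vertices $v\in E^0$ needs a one-line check: the contribution of each original edge $e\in M^1$ out of $v$, which was $\lambda g(r(e))$, is now replaced by $\lambda^{m_e+1}\tilde g(u_1^{(e)})$, and by construction this equals $\lambda g(r(e))$; edges of $E$ not in $M$ are untouched; and the newly attached trees at $v$ contribute their root values, which I must therefore subtract when I reassign — so I would instead keep $g(v)$ fixed by letting the tree roots ``absorb'' nothing at $v$ but rather be fed from a redistribution, i.e. I attach trees only where the balance already permits, or equivalently allow $\tilde g(v)$ to be recomputed; I will present the clean version where the root values of trees attached at $v$ are taken from the freedom in the outgoing edges.

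The main obstacle I anticipate is precisely this bookkeeping at vertices that simultaneously lie in $M$, have subdivided outgoing edges, \emph{and} have new trees attached: one must show the single scalar equation at such a vertex can still be satisfied with all the prescribed exponents ($\tilde n=1$ on the subdivided-chain edge, $\tilde n=0$ on the tree-root edges) and with $\tilde g$ strictly positive everywhere. The resolution is that adding a tree at $v$ adds a new outgoing edge with label $\lambda^0=1$ and a new unknown $\tilde g(\text{root})>0$ which is a free parameter — so the equation $\tilde g(v)=\sum(\dots)$ becomes underdetermined rather than overdetermined, and we simply choose the tree-root values small enough (any positive values summing to less than the slack $g(v)-\sum_{e\in M^1,s(e)=v}\lambda g(r(e))\ge 0$, which is $\ge 0$ by the original equation) — actually the original equation makes that slack exactly zero for edges in $M^1$, so the trees must be fed from the \emph{non-}$M$ outgoing edges of $v$ or the value $\tilde g(v)$ must exceed $g(v)$; the honest fix, and the one I would write, is to define $\tilde g$ on $\tilde M^0\supsetneq M^0$ as the unique positive solution of the \emph{enlarged} finite linear system (Lemma preceding this Proposition, applied to $\tilde M$), whose solvability for $\lambda\in(0,1)$ follows from a Perron--Frobenius / contraction argument exactly as in the proof of that Lemma, and then extend over the trees by \cite{PRen}. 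I expect the contraction estimate $\|\lambda P\|<1$ coming from $\lambda\in(0,1)$ and local finiteness to be the technical heart, with everything else routine verification.
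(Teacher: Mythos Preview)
Your proposal does not match the paper's argument, and the fallback you end with contains a genuine error.

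The paper's proof is local and explicit: one inserts a \emph{single} new vertex $t$ on a single edge $e$ (from $v$ to $w$), attaches the required tree(s) at $t$, and writes down formulas. The key move is to set $\tilde g=g$ on all old vertices, $\tilde g(t)=g(w)$, and $\tilde g(s)=\tfrac{1-\lambda}{\lambda}g(w)$ for the tree root $s$ (split equally among multiple roots). This gives the equations at $v$ and $t$ immediately, and multiple insertions are handled by iterating. No linear algebra, no eigenvalue argument---the existence of the old weight is used \emph{pointwise} to produce the new one.

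Your direct construction is heading in this direction, but the recursion you write is inconsistent: at $u_i$ the outgoing subdivided edge $e^{(i+1)}$ lies in $\tilde M$, so its coefficient is $\lambda$, not $1$; with that correction the system is triangular and solvable exactly as the paper does one step at a time. You never quite pin this down and instead retreat to a global argument.

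That global argument is the real problem. You propose to invoke the preceding Lemma by showing the enlarged matrix $\lambda P$ on $\tilde M$ has a positive eigenvector with eigenvalue $1$, and you say this follows from a ``contraction estimate $\|\lambda P\|<1$''. But $\|\lambda P\|<1$ forces the spectral radius below $1$, which \emph{excludes} an eigenvalue $1$ rather than guaranteeing one. Perron--Frobenius tells you about the \emph{leading} eigenvalue; it does not tell you that eigenvalue equals $1$ for a prescribed $\lambda$. The only reason the same $\lambda$ works on $\tilde M$ is that one can exhibit a solution built from $g$---which is precisely the local construction you abandoned. So the ``technical heart'' you expected does not exist; the heart is the explicit one-vertex formula.

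You do correctly flag the bookkeeping issue at old vertices acquiring new trees. Note that in the paper's setup the problem dissolves because insertions are done one at a time and the formulas keep $\tilde g=g$ on previously existing vertices; you should emulate that rather than trying to solve for all of $\tilde M$ at once.
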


\begin{proof} It is clear that $F$ is a graph and that $\tilde M$ is a zhyvot for $F$, 
since we can not introduce 
sources when vertices are only introduced splitting an existing edge into two, since
one of them has range the new vertex. 
Since extending a faithful graph state on the zhyvot $\tilde{M}$ to any graph obtained
by adding trees to vertices is possible, we need only be concerned with building a new 
special graph state on the zhyvot.

The problem turns out to be local, and we refer to Figure \ref{fig:insert-vertex} for 
the notation we shall use.

\begin{figure}[ht]
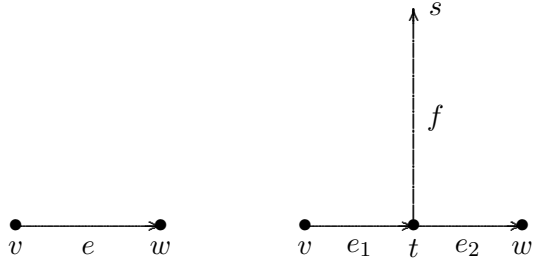

\[
\beginpicture
\setcoordinatesystem units <2.5em, 2.5em>
\setplotarea x from -2.5 to 9, y from -3.0 to 5.0
\put{$\bullet$} at -0.5 0
\put{$\bullet$} at 1.5 0
\put{$v$} at -0.5 -0.3
\put{$w$} at 1.5 -0.3
\setsolid
\plot -0.5 0 1.5 0 /
\arr(1.3,0)(1.5,0)
\put{$e$} at 0.5 -0.3
\plot 3.5 0 6.5 0 /
\arr(6.3,0)(6.5,0)
\put{$\bullet$} at 3.5 0
\put{$\bullet$} at 6.5 0
\put{$\bullet$} at 5.0 0
\put{$e_1$} at 4.25 -0.3
\put{$t$} at 5.0 -0.3
\put{$e_2$} at 5.75 -0.3
\put{$v$} at 3.5 -0.3
\put{$w$} at 6.5 -0.3
\plot 5.0 0 5.0 3 /
\arr(5.0,2.8)(5.0,3)
\arr(4.8,0.0)(5.0,0.0)
\put{$s$} at 5.3 3
\put{$f$} at 5.3 1.5
\endpicture
\]
\caption{Inserting a vertex}
\label{fig:insert-vertex}
\end{figure}

We suppose we have an edge $e$ with $s(e)=v$ and $r(e)=w$ in a graph
with special graph weight $(g,\lambda,n)$. So $g(v)=\lambda(e) g(w)+R$, where
$R=\sum_{s(f)=v,\ f\neq e}\lambda(e)g(r(f))$.

We now introduce a new vertex $t$ splitting $e$ into two edges
$e_1,e_2$ with $s(e_1)=v,\,r(e_1)=t,\,s(e_2)=t,\,r(e_2)=w$. We also
introduce a new edge $f$ with $s(f)=t$, $r(f)=s$ for some other vertex
$s$. We observe that we could add several edges $f_1,\dots,f_n$ 
with source $t$, and we indicate the modifications required in this case below.

We want to construct a special graph weight $\tilde{g}$ without changing our
parameter $\lambda$, or the values of the graph weight where it is
already defined. Thus we would like to solve

$$ \tilde{g}(v)=\lambda \tilde{g}(t)+R,\ \ \ 
\tilde{g}(t)=\lambda \tilde{g}(w)+\lambda \tilde{g}(s),\ \
\tilde{g}(v)=g(v),\ \ \tilde{g}(w)=g(w).$$

A solution to the above equations is as follows. Define $\tilde{g}=g$ on all
previously existing vertices, and on the vertex $s=r(f)$ set
$\tilde{g}(s)=\frac{1-\lambda}{\lambda}g(w)$. Then the above equations are
satisfied and we obtain $\tilde{g}(t)=g(w)$. If we have multiple edges $f_1,\dots,f_n$
then replacing $\tilde{g}(s)$ by $\sum_j\tilde{g}(r(f_j))$ we have a solution provided
$$ \sum_j\tilde{g}(r(f_j))=\frac{1-\lambda}{\lambda}g(w),$$
and thus we may just set the value of $\tilde{g}(r(f_j))$ to be 
$\frac{1}{n}\frac{1-\lambda}{\lambda}g(w)$. 
 
Finally, define $\tilde{n}$ by making it identically one on edges in $\tilde{M}$ and 
identically zero on other edges. Observe that $f$ is not an edge in the zhyvot.
\end{proof}

\section{Modular Index Invariants of Mumford Curves}\label{sec:mod-ind}

We have seen that we can associate directed graphs to Mumford
curves. These graphs consist of a finite graph along with trees
emanating out from some or all of its vertices. 
Though we do not have a general existence result, generically
we can construct ``special'' graph weights on such graphs.

{}From this we can construct both an equivariant Kasparov module
$(X,\D)$ and a
modular spectral triple $(\A,\HH,\D)$ as in \cite{CNNR}. Here the equivariance is with 
respect to the `modified zhyvot action' introduced in Corollary \ref{cr:useful-statement}. 
To compute the index pairing using the results of \cite{CNNR}, we need only be
able to compute traces of operators of the form $p\Phi_k$ where $p\in
F$ is a projection and the $\Phi_k$ are spectral projections of the
$\T$ action (or of $\D$ or of $\Delta$).

In the specific case of Mumford curves, the modular index pairings we would like to 
compute are with the
modular partial isometries arising from loops in the central graph corresponding to
the action on $\Delta_{\mathbb K}$ of each one of a chosen set of
generators $\{ \gamma_1, \ldots, \gamma_g \}$ of the Schottky group $\Gamma$.
These correspond to the fundamental closed geodesics in $\Delta_{\mathbb K}/\Gamma$,
by analogy to the fundamental closed geodesics in the hyperbolic 3-dimensional
handlebody $\H/\Gamma$ considered in \cite{Man}, \cite{CM}. The lengths of
these fundamental closed geodesics are the Schottky invariants of
$(\Gamma, \{ \gamma_1, \ldots, \gamma_g \})$ introduced above.

We introduce some notation so that we may effectively describe these
projections. The zhyvot of the graph we denote by $M$. 
Since outside of $M$ our graph is a union of trees, we may and do
suppose that the restriction of our graph weight to the exterior of
$M$ is a graph trace. That is, for all $v\notin M$ we have
$$ g(v)=\sum_{s(e)=v}g(r(e)),$$
and so for $e\notin M$, $\s_t(S_e)=S_e$.

In \cite{CNNR} we showed how to construct a Kasparov module for 
$A=C^*(E)$ and $F=A^\s$. We let $\Phi:A\to F$ be the expectation given
by averaging over the circle action, and define an inner product on $A$ 
with values in $F$ by setting
$$
(a|b):=\Phi(a^*b).
$$

We denote the $C^*$-module completion by $X$, and note that it is a 
full right $F$-module. There is an obvious action of $A$ by left multiplication,
and this action is adjointable. 

On the dense subspace $A_c\subset X$ we define an unbounded operator
$\D$ by defining it on generators and extending by linearity. We set
$$
\D S_\mu S_\nu^*:=(|\mu|_\s-|\nu|_\s)\,S_\mu S_\nu^*,
$$ so that up to a factor of $\log(\lambda)$, $\D$ is the generator of the 
zhyvot action. Observe that for a path $\mu$ contained in the 
exterior of $M$
we have $|\mu|_\s=0$. The closure of $\D$ is self-adjoint, regular, and 
for all $a\in A_c$ the endomorphism of $X$ given by $a(1+\D^2)^{-1}$ 
is a compact endomorphism. 

It is proved in \cite{CNNR} that $({}_AX_F,\D)$ is an equivariant Kasparov 
module for $A$-$F$ (with respect to the zhyvot action) and so it defines a 
class in $KK^{1,\T}(A,F)$.

Similarly, if we set $\HH:=\HH_{\phi_{g,\lambda}}$ to be the GNS space 
of $A$ associated to the weight $\phi_{g,\lambda}$, we obtain an unbounded
operator $\D$ (with the same definition on $A_c\subset \HH$). The 
triple $(\A,\HH,\D)$ is not quite a spectral triple. 

The compact endomorphisms of the $C^*$-module $X$, $End^0_F(X)$, 
act on $\HH$ in a natural fashion, \cite{CNNR}, and we define a von 
Neumann algebra by
$\cn=(End^0_F(X))''$. There is a natural trace 
$\mbox{Tr}_{\phi_{g,\lambda}}$ on $\cn$ satisfying 
$$
\mbox{Tr}_{\phi_{g,\lambda}}(\Theta_{x,y})=\phi_{g,\lambda}((y|x))
$$
for all $x,\,y\in X$. We define a weight $\phi_\D$ on $\cn$ by
$$
\phi_\D(T):=\mbox{Tr}_{\phi_{g,\lambda}}(\lambda^{\D}T),\ \ T\in\cn.
$$
Then the modular group of $\phi_\D$ is inner, and we let $\cM\subset\cn$
denote the fixed point algebra of the modular action. Then $\phi_\D$
restricts to a trace on $\cM$ and it is shown in \cite{CNNR} that
$$
f(1+\D^2)^{-1/2}\in\LL^{(1,\infty)}(\cM,\phi_\D),\ \ \ f\in F.
$$
Using this information it is shown in \cite{CNNR} that there is a pairing 
between $(A_c,\HH,\D)$ and homogenous (for the zhyvot action) 
partial isometries $v\in A_c$ with source and range projections in $F$.
The pairing is given by the spectral flow
$$
sf_{\phi_\D}(vv^*\D,v\D v^*)\in\R,
$$
this being well-defined since $v\D v^*\in\cM$. The numerical spectral
flow pairing and the equivariant $KK$ pairing are compatible.

In order to compute the spectral flow, we need explicit formulae for the 
spectral projections of $\D$ both as an operator on $X$ and as an 
operator on $\HH$. 

To this end, if $v\in E^0$ and $m>0$ we set $|v|_m=$the number of
paths $\mu$ with $|\mu|_\s=m$ and $r(\mu)=v$. It is important that our
graph is locally finite and has no sources so that $0<|v|_m<\infty$
for all $v\in E^0$ and $m>1$.

\begin{prop}\label{pr:spec-projs} The spectral projections of $\D$ can
be represented as follows:

$1)$ For $m>0$
$$ \Phi_m=\sum_{\substack{|\mu|_\s=m\\ s(\mu)\in M\\ r(\mu)\in M}}
\Theta_{S_\mu,S_\mu}.$$

$2)$ For $m=0$
$$\Phi_0=\sum_{v\in E^0}\Theta_{p_v,p_v}.$$

$3)$ For $m<0$, $v\in E^0$
$$ p_v\Phi_m=\frac{1}{|v|_{|m|}}\sum_{\substack{|\mu|_\s=|m|\\
r(\mu)=v}} \Theta_{S_\mu^*,S_\mu^*}.$$
In all cases, for (a subprojection of) a vertex projection $p_v$, the
operator $p_v\Phi_m$ is a finite rank endomorphism of the Kasparov
module $X$ and in the domain of $\phi_\D$ as an operator in
$\cM\subset\cn$.
\end{prop}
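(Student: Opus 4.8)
The plan is to verify the three formulae directly from the definitions of the $C^*$-module $X$, the operator $\D$, and the rank-one endomorphisms $\Theta_{x,y}$, using the Cuntz--Krieger relations and the structure of the zhyvot $M$. Recall that for $x,y\in X$ the rank-one endomorphism acts by $\Theta_{x,y}(z)=x\,(y|z)=x\,\Phi(y^*z)$, and that $\D$ is diagonal on the spanning set $\{S_\mu S_\nu^*\}$ with eigenvalue $|\mu|_\s-|\nu|_\s$. So a projection onto the eigenspace $\D=m$ must, when applied to a generator $S_\alpha S_\beta^*$, return that generator if $|\alpha|_\s-|\beta|_\s=m$ and $0$ otherwise. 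The strategy for each case is therefore: (i) write down the candidate operator, (ii) apply it to an arbitrary generator $S_\alpha S_\beta^*\in A_c$, (iii) compute using $\Phi(S_\gamma^* S_\alpha S_\beta^*)$ and the product formula \eqref{eq:product}, and (iv) check the answer equals $S_\alpha S_\beta^*$ or $0$ as dictated by $m$. Throughout one uses that $\Phi(S_\sigma S_\tau^*)=\delta_{\sigma,\tau}p_{r(\sigma)}$ (the gauge expectation kills off-diagonal terms), and that for a path $\mu$ lying outside $M$ one has $|\mu|_\s=0$, so all the ``$\s$-length'' is concentrated in $M$.

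For case $(1)$, $m>0$: applying $\sum_{|\mu|_\s=m,\,s(\mu),r(\mu)\in M}\Theta_{S_\mu,S_\mu}$ to $S_\alpha S_\beta^*$ gives $\sum_\mu S_\mu\,\Phi(S_\mu^* S_\alpha S_\beta^*)$. Using \eqref{eq:product} and the expectation, $\Phi(S_\mu^* S_\alpha S_\beta^*)$ is nonzero only when $\mu$ is an initial segment of $\alpha$ with $\overline\alpha=\beta$ in $\s$-length terms; one then checks that summing over all such $\mu$ with $|\mu|_\s=m$ reconstructs $S_\alpha S_\beta^*$ precisely when $|\alpha|_\s-|\beta|_\s=m$, using the Cuntz--Krieger relation $\sum_{s(e)=v}S_eS_e^*=p_v$ to telescope the sum of the ``tail'' extensions. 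The condition $s(\mu),r(\mu)\in M$ is exactly what is needed so that $|\mu|_\s=|\mu|$ along $M$ and no spurious contributions from outside $M$ appear; this is where the zhyvot hypotheses (no sources in $M$, property (2) of the zhyvot) enter. Case $(2)$, $m=0$: $\sum_v \Theta_{p_v,p_v}$ applied to $S_\alpha S_\beta^*$ yields $\sum_v p_v\,\Phi(p_v S_\alpha S_\beta^*)=\sum_v \delta_{v,s(\alpha)}\delta_{\alpha,\beta}S_\alpha p_{r(\alpha)}$ after using $\Phi(S_\alpha S_\beta^*)=\delta_{\alpha,\beta}p_{r(\alpha)}$ — wait, more carefully, $p_v S_\alpha S_\beta^* = \delta_{v,s(\alpha)} S_\alpha S_\beta^*$ and then $\Phi$ of that is $\delta_{v,s(\alpha)}\delta_{\alpha,\beta}p_{r(\alpha)}$, giving back $\delta_{\alpha,\beta}S_\alpha S_\beta^*$, which is correct since $\D S_\alpha S_\alpha^*=0$ and conversely any generator with $|\alpha|_\s=|\beta|_\s$ but $\alpha\neq\beta$ must be handled — here one uses that $\Phi$ already projects onto $|\alpha|_\s=|\beta|_\s$, and a further diagonalization argument (as in the proof of Proposition~\ref{pr:Hilbert-alg}, splitting via the diagonal expectation $\Psi$) shows the full $m=0$ eigenprojection is as claimed.

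Case $(3)$, $m<0$: here the normalizing factor $1/|v|_{|m|}$ is the crucial feature. Applying $p_v\Phi_m$ with the candidate $\frac{1}{|v|_{|m|}}\sum_{|\mu|_\s=|m|,\,r(\mu)=v}\Theta_{S_\mu^*,S_\mu^*}$ to a generator, one computes $\frac{1}{|v|_{|m|}}\sum_\mu S_\mu^*\,\Phi(S_\mu S_\alpha S_\beta^*)$; the point is that $S_\mu S_\mu^*$ for $r(\mu)=v$ are mutually orthogonal projections summing to something covering $p_v$ pulled back, but they are \emph{not} a partition of unity at $v$ in the module $X$ — rather, $(S_\mu^*|S_\nu^*)=\Phi(S_\mu S_\nu^*)=\delta_{\mu,\nu}p_{s(\mu)}$, and since there are $|v|_{|m|}$ such paths $\mu$ all contributing $p_v$'s worth, one divides by $|v|_{|m|}$ to get a genuine projection. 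The local finiteness and absence of sources guarantee $0<|v|_{|m|}<\infty$, so the normalization makes sense. The last sentence — that $p_v\Phi_m$ is finite rank on $X$ and lies in $\cM\subset\cn$ in the domain of $\phi_\D$ — follows because each $\Phi_m$ restricted to (a subprojection of) a single vertex projection is a finite sum of rank-one endomorphisms $\Theta_{x,y}$ with $x,y\in A_c$, hence finite rank; membership in $\cM$ (the fixed-point algebra of the modular action of $\phi_\D$) holds because $\D$ commutes with these spectral projections by construction, and finiteness of $\phi_\D(p_v\Phi_m)=\mathrm{Tr}_{\phi_{g,\lambda}}(\lambda^\D p_v\Phi_m)$ reduces to evaluating $\phi_{g,\lambda}$ on the finitely many generators involved, each of which gives a finite value since $(g,\lambda)$ is a genuine (finite-valued) graph weight.

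The main obstacle I expect is case $(3)$: getting the normalization $1/|v|_{|m|}$ correct requires care because the ``$S_\mu^*$'' vectors in the module do not behave as symmetrically as the ``$S_\mu$'' vectors (the inner product $(S_\mu^*|S_\nu^*)$ lands on $p_{s(\mu)}$ rather than $p_{r(\mu)}$), and one must check both that $p_v\Phi_m$ is idempotent \emph{and} that it is self-adjoint as an endomorphism, \emph{and} that summing $\sum_v p_v\Phi_m$ over $v$ recovers the full spectral projection $\Phi_m$ — this last requires knowing that every generator $S_\alpha S_\beta^*$ with $|\alpha|_\s-|\beta|_\s=m<0$ decomposes appropriately under the $\Theta_{S_\mu^*,S_\mu^*}$, which again uses the zhyvot structure to ensure the relevant paths $\mu$ (with $|\mu|_\s=|m|$) all exist and terminate inside $M$. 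A secondary subtlety is justifying convergence of the infinite sums defining $\Phi_m$ for $m\ge 0$ as strictly convergent sums of endomorphisms of $X$ — here one invokes Lemma~\ref{strict} together with the standing local finiteness assumption, exactly as in \cite{CNNR}.
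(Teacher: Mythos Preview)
Your overall strategy --- apply the candidate operator to a generator $S_\alpha S_\beta^*$ and check it returns the right thing --- is exactly what the paper does. But you have misidentified the conditional expectation $\Phi$, and this breaks the computations.

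You write $\Phi(S_\sigma S_\tau^*)=\delta_{\sigma,\tau}\,p_{r(\sigma)}$. That is the formula for the expectation $\Psi$ onto the \emph{diagonal} subalgebra. The expectation $\Phi$ used to build the module $X$ is the one onto the fixed-point algebra $F=C^*(E)^\s$ of the zhyvot action, and it gives
\[
\Phi(S_\sigma S_\tau^*)=\delta_{|\sigma|_\s,|\tau|_\s}\,S_\sigma S_\tau^*.
\]
With the correct $\Phi$, the paper's computation in case $(1)$ is immediate: $\Theta_{S_\mu,S_\mu}S_\rho S_\gamma^*=S_\mu\,\Phi(S_\mu^*S_\rho S_\gamma^*)=\delta_{|\mu|_\s,|\rho|_\s-|\gamma|_\s}\,\delta_{\mu,\underline\rho}\,S_\rho S_\gamma^*$, and the constraints $s(\mu),r(\mu)\in M$ together with zhyvot property (2) force a \emph{unique} $\mu$ to contribute in the sum. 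There is no telescoping and no Cuntz--Krieger relation needed. Your proposed telescoping over ``tail extensions'' comes from trying to repair the damage done by the wrong $\Phi$; if you actually carried it out with $\delta_{\sigma,\tau}$ you would get $\delta_{\overline\alpha,\beta}\,S_{\underline\alpha}$ rather than $S_\alpha S_\beta^*$, and no amount of summing fixes that.

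The same error is what derails your case $(2)$: with the correct $\Phi$ one gets $\Theta_{p_v,p_v}S_\rho S_\gamma^*=\delta_{v,s(\rho)}\delta_{|\rho|_\s,|\gamma|_\s}S_\rho S_\gamma^*$ directly, and summing over $v$ gives $\Phi_0$ --- no ``further diagonalization argument'' is needed. In case $(3)$ your formula $(S_\mu^*|S_\nu^*)=\delta_{\mu,\nu}p_{s(\mu)}$ is again wrong (it equals $\delta_{|\mu|_\s,|\nu|_\s}S_\mu S_\nu^*$), though your intuition for the normalization happens to survive: the paper computes $\Theta_{S_\mu^*,S_\mu^*}S_\rho S_\gamma^*=\delta_{r(\mu),s(\rho)}\delta_{|\mu|_\s+|\rho|_\s,|\gamma|_\s}S_\rho S_\gamma^*$, and since every one of the $|v|_{|m|}$ paths $\mu$ with $r(\mu)=v$ contributes identically, one divides. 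Fix the expectation and all three cases go through cleanly, exactly as in the paper.
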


\begin{proof} We first recall that the $C^*$-module inner product is given by
$$(x|y)_R=\Phi(x^*y).$$
Now let $S_\rho S_\gamma^*\in X$ and with $|\mu|_\s>0$ consider
\begin{align*} \Theta_{S_\mu,S_\mu}S_\rho S_\gamma^*&=S_\mu(S_\mu|S_\rho
S_\gamma^*)_R\nno
&=\delta_{|\mu|_\s,|\rho|_\s-|\gamma|_\s}S_\mu S_\mu^*S_\rho S_\gamma^*\nno
&=\delta_{|\mu|_\s,|\rho|_\s-|\gamma|_\s}\delta_{\mu,\underline{\rho}}S_\rho
S_\gamma^*.
\end{align*}
Hence this is nonzero only when $|\rho|_\s=|\gamma|_\s+|\mu|_\s\geq |\mu|_\s$ and
$\underline{\rho}=\mu$. Thus when $|\rho|_\s-|\gamma|_\s=m>0$
$$\sum_{\substack{|\mu|_\s=m\\ s(\mu)\in M\\ r(\mu)\in M}}
\Theta_{S_\mu,S_\mu}S_\rho S_\gamma^*=
\Theta_{\underline{\rho},\underline{\rho}}S_\rho S_\gamma^*=S_\rho
S_\gamma^*,$$
and $\sum\Theta_{S_\mu,S_\mu}$  is zero on all other elements of
$X$. Hence the claim for the positive spectral projections is proved,
since finite sums of generators $S_\rho S_\gamma^*$ are dense in $X$. A
similar argument proves the claim for the zero spectral projection.

For the negative spectral projections, we observe that
\begin{align*} \Theta_{S_\mu^*,S_\mu^*}S_\rho,S_\gamma^*&=
S_\mu^*\delta_{r(\mu),s(\rho)}\delta_{|\mu|_\s+|\rho|_\s,|\gamma|_\s}S_\mu
S_\rho S_\gamma^*\nno
&=\delta_{r(\mu),s(\rho)}\delta_{|\mu|_\s+|\rho|_\s,|\gamma|_\s}S_\rho
S_\gamma^*.
\end{align*}
Summing over all paths $\mu$ with $|\mu|_\s=m>0$ and $r(\mu)=s(\rho)$
gives
$$\sum_{\substack{|\mu|_\s=m\\
r(\mu)=s(\rho)}}\Theta_{S_\mu^*,S_\mu^*}S_\rho S_\gamma^*=
\delta_{|\rho|_\s-|\gamma|_\s,-m}|s(\rho)|_mS_\rho S_\gamma^*.$$
Hence for a vertex $v\in E^0$
\begin{align*} \frac{1}{|v|_{|m|}}\sum_{\substack{|\mu|_\s=|m|\\
r(\mu)=v}} \Theta_{S_\mu^*,S_\mu^*}S_\rho
S_\gamma^*&=\delta_{|\rho|_\s-|\gamma|_\s,-m} \delta_{s(\rho),v}S_\rho
S_\gamma^*\nno
&=p_v\Phi_{-m}S_\rho S_\gamma^*.
\end{align*}
In all cases $p_v\Phi_k$ is a finite sum of rank one endomorphisms,
and so finite rank. In particular they are expressed as finite rank
endomorphisms of $\mbox{dom}(\phi)^{1/2}\subset X$, since for a graph
weight $g,\lambda$ all the $S_\mu$ and $S_\mu^*$ lie in the domain of the
associated weight $\phi$. This ensures that these endomorphisms extend
by continuity to the Hilbert space completion of
$\mbox{dom}(\phi)^{1/2}$ and by the construction of $\phi_\D$,
each $p_v\Phi_k\in\cM\subset\cn$ has finite {\em trace}. Similar
comments evidently apply to projections of the form $S_\mu S_\mu^*$
since this is a subprojection of $p_{s(\mu)}$.
\end{proof}

For large positive $k$, 
the computation of $\phi_\D(S_\mu S_\mu^*\Phi_k)$ is extremely
difficult, and needs to be handled on a `graph-by-graph'
basis. However it turns out that we need only compute for $|k|\leq |\mu|_\s$, and 
this is completely tractable.

\begin{lemma}\label{lem:trace-comp} Let $\gamma$ be a path in $E$ with
$s(\gamma),\,r(\gamma)\in M$ and $|\gamma|_\s>0$. Then for all $k\in\Z$ with
$|\gamma|_\s\geq |k|$ we have
$$\phi_\D(S_\gamma S_\gamma^*\Phi_k)=\phi_{g,\lambda}(S_\gamma
S_\gamma^*)=\lambda^{|\gamma|_\s}g(r(\gamma)).$$
For a path of length zero (i.e. a vertex $v$) in $M$ and $k<0$ we have
$$ \phi_\D(p_v\Phi_k)=\phi_{g,\lambda}(p_v)=g(v).$$
\end{lemma}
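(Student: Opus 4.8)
The plan is to compute $\phi_\D(S_\gamma S_\gamma^*\Phi_k)$ directly from the trace formula $\phi_\D(T) = \mathrm{Tr}_{\phi_{g,\lambda}}(\lambda^\D T)$ together with the explicit spectral projections of Proposition \ref{pr:spec-projs}. The key observation that makes the hypothesis $|\gamma|_\s \geq |k|$ so useful is that when this holds, the projection $S_\gamma S_\gamma^*$ is supported in the spectral subspace structure in a way compatible with $\Phi_k$: since $S_\gamma S_\gamma^* = \Theta_{S_\gamma,S_\gamma}$ with $|\gamma|_\s > 0$, and since $S_\gamma S_\gamma^*$ as an endomorphism kills everything not beginning with $\gamma$, the product $S_\gamma S_\gamma^*\Phi_k$ should reduce to a rank-one endomorphism built from paths extending $\gamma$.

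First I would treat the case $k \geq 0$. Using part (1) or (2) of Proposition \ref{pr:spec-projs}, write $\Phi_k$ as a sum of $\Theta_{S_\mu,S_\mu}$ over paths $\mu$ with $|\mu|_\s = k$, $s(\mu),r(\mu)\in M$ (or the vertex-projection sum when $k=0$). Composing with $\Theta_{S_\gamma,S_\gamma}$ and using $\Theta_{x,x}\Theta_{y,y} = \Theta_{x (x|y), y}$ one finds that the only surviving term is the one where $\mu$ is the initial segment $\underline{\gamma}$ of $\gamma$ of length-parameter $k$ — this is where $|\gamma|_\s\geq k$ enters, guaranteeing such an initial segment exists and lies in $M$ (the zhyvot is a subgraph, so initial segments of a path in $M$ are in $M$). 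Thus $S_\gamma S_\gamma^*\Phi_k = \Theta_{S_\gamma, S_\gamma}$ again (the $\Phi_k$ acts as identity on the range of this endomorphism). Then $\phi_\D(\Theta_{S_\gamma,S_\gamma}) = \mathrm{Tr}_{\phi_{g,\lambda}}(\lambda^\D \Theta_{S_\gamma,S_\gamma})$; since $\lambda^\D S_\gamma = \lambda^{|\gamma|_\s}S_\gamma$, this is $\lambda^{|\gamma|_\s}\mathrm{Tr}_{\phi_{g,\lambda}}(\Theta_{S_\gamma,S_\gamma}) = \lambda^{|\gamma|_\s}\phi_{g,\lambda}((S_\gamma|S_\gamma))$. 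Finally $(S_\gamma|S_\gamma) = \Phi(S_\gamma^* S_\gamma) = p_{r(\gamma)}$, so this equals $\lambda^{|\gamma|_\s}g(r(\gamma))$, which is also exactly $\phi_{g,\lambda}(S_\gamma S_\gamma^*)$ by the definition of $\phi_{g,\lambda}$. For the vertex case with $k < 0$: from part (3), $p_v\Phi_k$ is an average of $\Theta_{S_\mu^*,S_\mu^*}$ over paths $\mu$ with $r(\mu)=v$; applying $\mathrm{Tr}_{\phi_{g,\lambda}}(\lambda^\D \cdot)$ and noting $\lambda^\D S_\mu^* = \lambda^{-|\mu|_\s}S_\mu^*$ while $(S_\mu^*|S_\mu^*) = \Phi(S_\mu S_\mu^*) = \lambda^{?}\cdots$ — here one uses that outside $M$ the weight is a trace, but $v\in M$ forces any such $\mu$ to have $|\mu|_\s = |k|$, giving $\lambda^{|k|}$ times $\lambda^{-|k|}$ times the normalization $1/|v|_{|k|}$ summed $|v|_{|k|}$ times, collapsing to $g(v)$.

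The main obstacle I anticipate is bookkeeping the $\lambda$-powers correctly in the negative case: one must carefully track that $\mathrm{Tr}_{\phi_{g,\lambda}}(\Theta_{S_\mu^*,S_\mu^*}) = \phi_{g,\lambda}((S_\mu^*|S_\mu^*)) = \phi_{g,\lambda}(S_\mu S_\mu^*) = \lambda(\mu)g(r(\mu))$, that $\lambda(\mu) = \lambda^{|\mu|_\s}$ under the special-graph-weight hypothesis (since $n_e\in\{0,1\}$ and only zhyvot edges contribute), and that the factor $\lambda^\D$ contributes $\lambda^{-|\mu|_\s}$; these two cancel, leaving $g(r(\mu)) = g(v)$, and the average over the $|v|_{|m|}$ equal terms gives $g(v)$. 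The positive/zero case is essentially immediate once the reduction $S_\gamma S_\gamma^*\Phi_k = \Theta_{S_\gamma,S_\gamma}$ is established. Throughout, one must also confirm that these finite-rank endomorphisms genuinely lie in $\cM \subset \cn$ and in the domain of $\phi_\D$, but this is already guaranteed by the last sentence of Proposition \ref{pr:spec-projs}.
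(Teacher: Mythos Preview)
Your argument for $k<0$ and $\gamma=v$ a vertex is essentially correct and matches the paper's. The gap is in the case $0\leq k<|\gamma|_\s$ (and you also omit the case $|\gamma|_\s>0$, $k<0$, though that is a minor extension of what you do for vertices).

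The problem is the identification ``$S_\gamma S_\gamma^*=\Theta_{S_\gamma,S_\gamma}$''. This is false: as an endomorphism of $X$, left multiplication by $S_\gamma S_\gamma^*$ sends $x\mapsto S_\gamma S_\gamma^*x$, whereas $\Theta_{S_\gamma,S_\gamma}x=S_\gamma\,\Phi(S_\gamma^*x)$. These agree only on the $|\gamma|_\s$-th spectral subspace. In particular, if you carry out your own formula $\Theta_{x,x}\Theta_{y,y}=\Theta_{x(x|y),y}$ with $x=S_\gamma$ and $y=S_\mu$ where $\mu=\underline{\gamma}$ is the initial segment with $|\mu|_\s=k<|\gamma|_\s$, you get $(S_\gamma|S_\mu)=\Phi(S_\gamma^*S_\mu)=\Phi(S_{\overline{\gamma}}^*)=0$, since $S_{\overline{\gamma}}^*$ has nonzero degree. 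So your computation would yield $\Theta_{S_\gamma,S_\gamma}\Phi_k=0$, not $\Theta_{S_\gamma,S_\gamma}$.

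The fix, and what the paper does, is to use the correct formula for left multiplication acting on rank-one operators: $a\,\Theta_{x,y}=\Theta_{ax,y}$. Then
$$S_\gamma S_\gamma^*\,\Theta_{S_\mu,S_\mu}=\Theta_{S_\gamma S_\gamma^*S_\mu,\,S_\mu},$$
which is nonzero precisely when $\mu=\underline{\gamma}$, and one computes
$$\phi_\D(\Theta_{S_\gamma S_\gamma^*S_\mu,\,S_\mu})
=\lambda^{k}\,\phi_{g,\lambda}(S_\mu^*S_\gamma S_\gamma^*S_\mu)
=\lambda^{k}\,\phi_{g,\lambda}(S_{\overline{\gamma}}S_{\overline{\gamma}}^*)
=\lambda^{|\gamma|_\s}g(r(\gamma)).$$
For $k<0$ and $|\gamma|_\s>0$ one similarly uses $S_\gamma S_\gamma^*\,\Theta_{S_\mu^*,S_\mu^*}=\Theta_{S_\gamma S_\gamma^*S_\mu^*,\,S_\mu^*}$ with $r(\mu)=s(\gamma)$, obtaining $\lambda^{-|k|}\phi_{g,\lambda}(S_\mu S_\gamma S_\gamma^*S_\mu^*)$, which again collapses to $\lambda^{|\gamma|_\s}g(r(\gamma))$ after the average over $|s(\gamma)|_{|k|}$ terms.
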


\begin{proof} We begin with $|\gamma|_\s\geq k>0$. In this case the
definitions yield
\begin{align*} \phi_\D(S_\gamma S_\gamma^*\Phi_k)&=
\sum_{\substack{|\mu|_\s=k\\ s(\mu)\in M\\ r(\mu)\in
M}}\phi_\D(S_\gamma S_\gamma^*\Theta_{S_\mu,S_\mu})\nno
&=\sum_{\substack{|\mu|_\s=k\\ s(\mu)\in M\\ r(\mu)\in
M}}\lambda^{k}\phi_{g,\lambda}(S_\mu^*S_\gamma S_\gamma^*S_\mu)\nno
&=\lambda^{k}\phi_{g,\lambda}(S_{\overline{\gamma}}S^*_{\overline{\gamma}})\nno
&=\lambda^{|\gamma|_\s}
\phi_{g,\lambda}(S^*_{\overline{\gamma}}S_{\overline{\gamma}})=
\lambda^{|\gamma|_\s}\phi_{g,\lambda}(p_{r(\gamma)})\nno 
 &=\phi_{g,\lambda}(S_\gamma S_\gamma^*).
\end{align*}
So now consider $|\gamma|_\s>0$ or $\gamma=v$ for some vertex $v\in M$
and $k>0$. In the latter case, $S_\gamma S_\gamma^*=p_vp_v=p_v=S_\gamma^*
S_\gamma$. Then
\begin{align*} \phi_\D(S_\gamma S_\gamma^*\Phi_{-k})
&=\frac{1}{|s(\gamma)|_k} \sum_{\substack{|\mu|_\s=k\\
r(\mu)=s(\gamma)}}\lambda^{-k} \phi_{g,\lambda}(S_\mu S_\gamma S_\gamma^*
S_\mu^*)\nno
&=\frac{1}{|s(\gamma)|_k} \sum_{\substack{|\mu|_\s=k\\
r(\mu)=s(\gamma)}}\lambda^{|\gamma|_\s}\phi_{g,\lambda}(p_{r(\gamma)})\nno
&=\lambda^{|\gamma|_\s}\phi_{g,\lambda}(p_{r(\gamma)})\nno
&=\phi_{g,\lambda}(S_\gamma S_\gamma^*).
\end{align*}
This completes the proof.
\end{proof}

We now have the necessary ingredients to compute the modular index
pairing with $S_\gamma$ where $\gamma$ here denotes a loop contained in 
the finite graph $M=\Delta'_\Gamma/\Gamma$ and corresponding
to an element in the chosen set of generators of the Schottky group. 
We suppose that $k=|\gamma|_\s$ is non-zero, so that the loop is non-trivial, 
and denote $\beta:=-\log\lambda$.

By \cite[Lemma 4.10]{CNNR} and Lemma \ref{lem:trace-comp} we have
\begin{align*}
sf_{\phi_\D}(S_\gamma S_\gamma^*\D,S_\gamma\D S_\gamma^*)
&=-\sum_{j=0}^{k-1}e^{-\beta j}\mbox{Tr}_\phi(S_\gamma S_\gamma^*\Phi_j)\\
&=-\sum_{j=0}^{k-1}\phi_\D(S_\gamma S_\gamma^*\Phi_j)\\
&=-k\phi_{g,\lambda}(S_\gamma S_\gamma^*)\\
&=-k\lambda^{k}g(r(\gamma))=-ke^{-\beta k}g(r(\gamma)).
\end{align*}
Since we assume that $(g,\lambda)$ are given as part of the data of our special graph weight,
we can extract the integer $k$. Moreover, $k$ determines the value of the index pairing.

Thus, we see that the Schottky invariants of the data 
$(\Gamma, \{\gamma_1,\ldots,\gamma_g\})$
can be recovered from the modular index pairing and in fact determine it, for a given graph weight $(g,\lambda)$. This confirms the fact that the noncommutative geometry of the graph algebra
$C^*(E)=C^*(\Delta_{\mathbb K}/\Gamma)$ maintains the geometric information related to the action of
the Schottky group on the Bruhat--Tits tree $\Delta_{\mathbb K}$. This is still less information than being able to
reconstruct the curve, since the Schottky invariants only depend on the valuation. We show explicitly in the next section how the construction of graph weights works in some simple examples of Mumford curves.

\section{Low genus examples}

We consider here the cases of the elliptic curve with Tate 
uniformization (genus one case)
and the three genus two cases considered in \cite{CM1}.  In each 
of these examples we
give an explicit construction of graph weights and compute the 
relevant modular index pairings, 
showing that one recovers from them the Schottky invariants. 
Notice that, for the genus two cases,
the finite zhyvot graphs $\Delta_\Gamma/\Gamma$ are the 
same considered in \cite{CM1}, 
which we report here,
though in the present setting we work with the infinite graphs 
$\Delta_{\mathbb K}/\Gamma$. 
We discuss here the graph weights equation on the 
zhyvot graph and on the infinite graph  $\Delta_{\mathbb K}/\Gamma$.

{\bf Example: Genus 1} As a first application to Mumford curves we
consider the simplest case of genus one. In this case, the Schottky
uniformization is the Tate uniformization of $p$-adic elliptic
curves. The $p$-adic Schottky group is just a copy of $\Z$ generated
by a single hyperbolic element in $\PGL_2({\mathbb K})$. In this case the
graph $\Delta_{\mathbb K}/\Gamma$ will be always of the form illustrated in
Figure \ref{Mumford}, with a central polygon with $n$ vertices 
and trees departing from its vertices. With our convention on the
orientations, the edges are oriented in such a way as to go around the
central polygon, while the rest of the graph, \ie the trees stemming
from the vertices of the polygon, are oriented away from it and
towards the boundary $X_\Gamma =\partial \Delta_{\mathbb K}/\Gamma$.

\begin{figure}
\begin{center}
\includegraphics[scale=0.8]{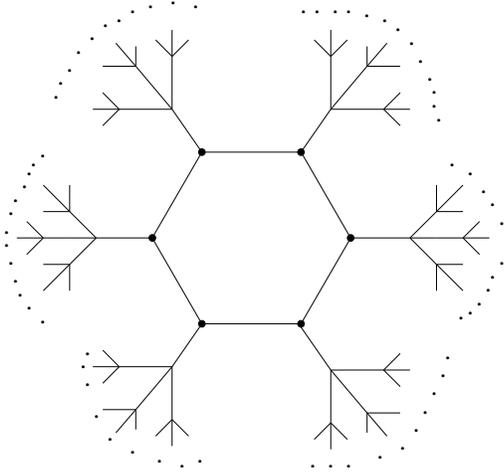}
\caption{The genus one case.
\label{Mumford}}
\end{center}
\end{figure}

Label the vertices on the polygon by $v_i$, $i=1,\dots,n$. To get a special
graph weight we need $0<\lambda<1$ and a function $g$ on the vertices such that
$$ g(v_i)=\lambda g(v_{i+1})+B_i,\qquad B_i=\sum_{v_{i+1}\neq w=r(e),\ s(e)=v_i}g(w).$$
To simplify we suppose all the $g(v_i)$ are equal, $\sum g(v_i)=1$ and all the $B_i$ 
are equal. 
Then we obtain a special graph
weight for any $\lambda<1$ by setting
$$ g(v_i)=\frac{1}{n},\qquad
B_i=\frac{1-\lambda}{n}.$$
For each $i$ we can now define the various $g(w)$ appearing in the sum 
defining $B_i$ by $g(w)=\frac{1}{m_i}g(w)$ where $m_i$ is the number of such $g(w)$. 
This graph weight can be extended to the rest of the trees as a graph
trace, and the associated $\T$ action is nontrivial on each $S_e$,
$e\in M^1$ where $M$ is just the central polygon. Hence choosing $\gamma$ 
to be the (directed) path which goes
once around the polygon (the choice of $r(\gamma)=s(\gamma)$ is
irrelevant) gives
$$\la [\gamma],\phi_{g,\lambda}\ra=-\lambda^{n}.$$

{\bf Example: Genus 2} 
In the case of genus two, the possible graphs $\Delta_\Gamma/\Gamma$
and the corresponding special fibers of the algebraic curve are 
illustrated in Figure \ref{graphs}, which we reproduce from \cite{CM1}, 
see also \cite{Mar}.

\begin{figure}
\begin{center}
\epsfig{file=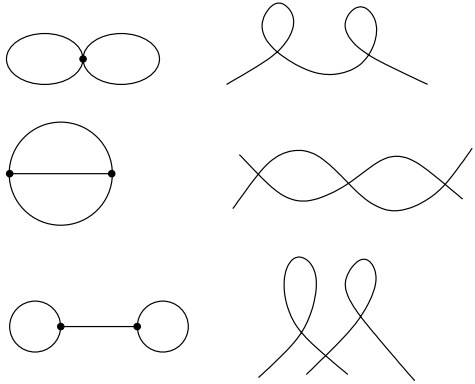} \caption{The graphs
$\Delta_\Gamma/\Gamma$ for genus $g=2$, and the corresponding
fibers. \label{graphs}}
\end{center}
\end{figure}

We see more in detail the various cases. These are the same cases considered in \cite{CM1}.

{\bf Case 1}: In the first case, the tree $\Delta_\Gamma$ is just
a copy of the Cayley graph of the free group $\Gamma$ on two
generators as in Figure \ref{trees1}.

\begin{figure}
\begin{center}
\includegraphics[scale=0.55]{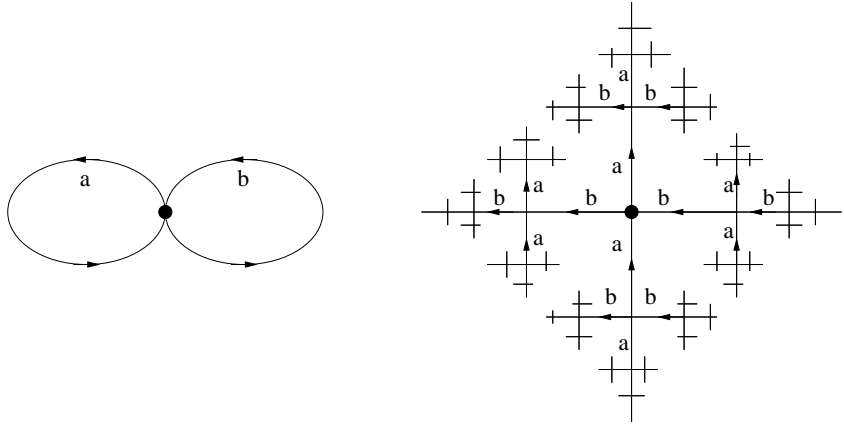}
\caption{Genus two: first case 
\label{trees1}}
\end{center}
\end{figure}

The graph algebra of this graph is the Cuntz algebra $O_2$. The only possible 
special graph state is $g(v)=1$ for the single vertex and $\lambda=1/2$. This corresponds
to the usual gauge action and its unique KMS state, \cite{CPR2}. Once
we add trees to this example, many more possibilities for the KMS 
weights appear.

{\bf Case 2}: In the second case, the finite directed graph
$\Delta_\Gamma/\Gamma$ is of the form illustrated in Figure
\ref{trees2}. We label by $a=e_1$,
$b=e_2$ and $c=e_3$ the oriented edges in the graph
$\Delta_\Gamma/\Gamma$, so that we have a corresponding set of
labels $E=\{ a,b,c,\bar a, \bar b, \bar c \}$ for the edges in the
covering $\Delta_\Gamma$. A choice of generators for the group
$\Gamma \simeq \Z * \Z$ acting on $\Delta_\Gamma$ is obtained by
identifying the generators $\gamma_1$ and $\gamma_2$ of $\Gamma$ with the
chains of edges $a b$ and $a \bar c$, hence the orientation on
the tree $\Delta_\Gamma$ and on the quotient graph is as
illustrated in the figure.

\begin{figure}
\begin{center}
\includegraphics[scale=0.55]{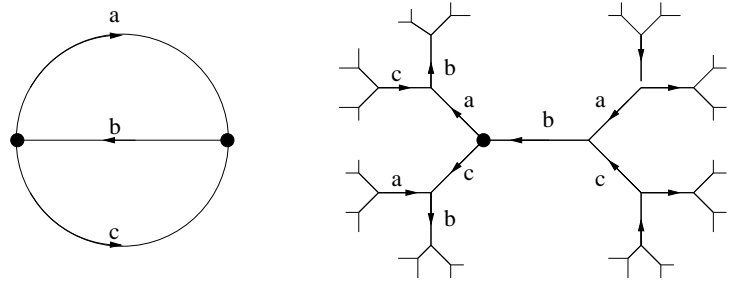}
\caption{Genus two: second case 
\label{trees2}}
\end{center}
\end{figure}

There are four special graph states on this graph algebra (up to swapping the roles of 
the edges $a$ and $c$). Let $v=s(b)$ and $w=r(b)$. Let $n_1=n(b)$, $n_2=n(a)$
and $n_3=n(c)$ with each $n_j\in\{0,1\}$.
Then the various states are described in Table \ref{t:states}. To fit with our 
requirement that the zhyvot action `sees' every edge in the zhyvot,
we should adopt only the last choice of state. Of course, we are again 
neglecting the trees, and including them would give us many more options.

\begin{table}
\caption{graph states for Case 2}
\begin{center}
\begin{tabular}{|c|c|c|c|c|c|}
\hline
$n_1$ & $n_2$ & $n_3$ & $\lambda$ & $g(v)$ & $g(w)$\\
\hline
$0$ & $0$ & $0$\mbox{ or }$1$ & --- & --- & ---\\
\hline
$0$ & $1$ & $1$ & $\frac{1}{2}$ & $\frac{1}{2}$ & $\frac{1}{2}$\\
\hline
$1$ & $0$ & $0$ & $\frac{1}{2}$ & $\frac{1}{3}$ & $\frac{2}{3}$\\
\hline
$1$ & $0$ & $1$ & $\frac{-1+\sqrt{5}}{2}$ & $\lambda^2$ & $\lambda$\\
\hline
$1$ & $1$ & $1$ & $\frac{1}{\sqrt{2}}$ & $\frac{1}{1+\sqrt{2}}$ & 
$\frac{\sqrt{2}}{1+\sqrt{2}}$ \\
\hline
\end{tabular}
\end{center}
\label{t:states}
\end{table}%

{\bf Case 3}: In the third case the obtained
oriented graph is the same as the graph of $SU_q(2)$ of
Figure \ref{SUq2Fig}. We have already described the graph states 
for $SU_q(2)$. In this case, the inclusion of trees is necessary to obtain 
a special graph weight adapted to the zhyvot action.
In fact, a choice of
generators for the group $\Gamma\simeq \Z * \Z$ acting on
$\Delta_\Gamma$ is given by $ab\bar a$ and $c$, so that the obtained
orientation is as in the figure.

\begin{figure}
\begin{center}
\includegraphics[scale=0.55]{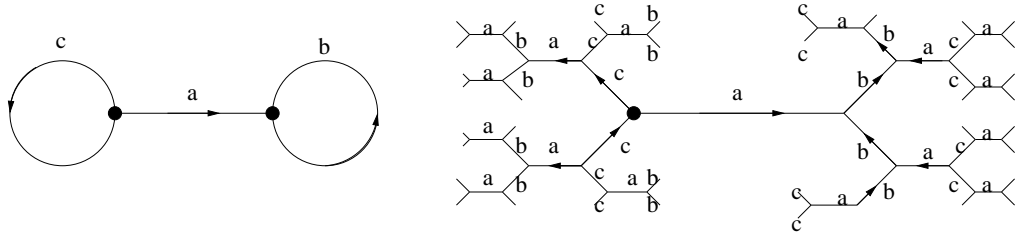}
\caption{Genus two: third case 
\label{trees3}}
\end{center}
\end{figure}

When we considers the tree $\Delta_\Gamma'$ instead of
$\Delta_\Gamma$ one is typically adding extra vertices. The way the
tree $\Delta_\Gamma'$ sits inside the Bruhat--Tits tree $\Delta_{\mathbb K}$ 
and in particular how many extra vertices of $\Delta_{\mathbb K}$ are present 
on the graph $\Delta_\Gamma'/\Gamma$ with respect to the vertices of
$\Delta_\Gamma/\Gamma$ gives some information on the uniformization,
\ie it depends on where the Schottky group $\Gamma$ lies in
$\PGL_2({\mathbb K})$, unlike the information on the graph
$\Delta_\Gamma/\Gamma$ which is purely combinatorial. For example, in
the genus two case of Figure \ref{graphs} one can have graphs 
$\Delta_\Gamma'/\Gamma$ and $\Delta_{\mathbb K}/\Gamma$ of the form as in Figure
\ref{graphsK}.

\begin{center}
\begin{figure}
\includegraphics[scale=0.6]{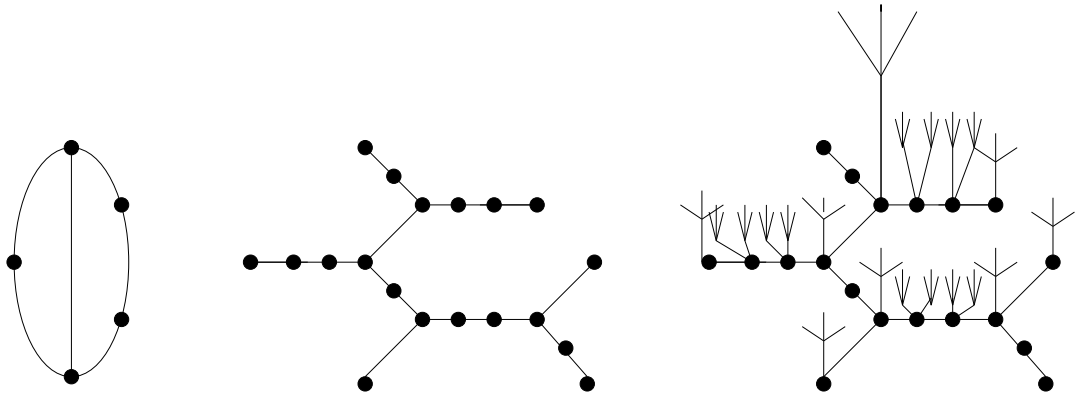}
\caption{An example of a graph $\Delta_\Gamma'/\Gamma$, the tree
$\Delta_\Gamma'$ and its embedding in $\Delta_{\mathbb K}$ for ${\mathbb K}=\Q_3$.
\label{graphsK}}
\end{figure}
\end{center}

{\bf Remark: Higher genus} In the higher genus case one knows by
\cite[p.124]{GvP}  that any stable graph can occur as the graph
$\Delta_\Gamma/\Gamma$ of a Mumford curve. By stable graph we mean a 
finite graph which is connected and such that each vertex that is not
connected to itself by an edge is the source of at least three edges. 

Thus, the combinatorial complexity of the graph is pretty much
arbitrary. One can also assume, possibly after passing to a finite
extension of the field ${\mathbb K}$, that there are infinite homogeneous trees
attached to each vertex of the graph $\Delta_\Gamma/\Gamma$.

We make the final remark that the restriction to circle actions in this paper
is likely an artificial one. In work in progress, KMS index theory is extended to 
quasi-periodic actions of the reals. As the action associated to {\em any} graph
weight will be quasi-periodic, this will hopefully allow us to prove general existence 
theorems for (quasi-periodic) graph weights compatible with the zhyvot action.

\section{Jacobian and theta functions}

We recall here briefly the relation between the Jacobian and theta
functions of a Mumford curve and the group of currents on the infinite
graph $\Delta_{\mathbb K}/\Gamma$.

Recall first that a current on a locally finite graph $G$ is an 
integer valued function of the oriented edges of $G$ that 
satisfies the following properties.
\begin{enumerate}
\item Orientation reversal:
\begin{equation}\label{curr1}
\mu(\bar e)=-\mu(e),
\end{equation}
where $\bar e$ is the edge $e$ with the reverse orientation.
\item Momentum conservation:
\begin{equation}\label{curr2}
\sum_{s(e)=v}\mu(e)=0.
\end{equation}
\end{enumerate}
One denotes by $\cC(G)$ the abelian group of currents on the graph $G$. 

Suppose we are given a tree $\cT$. Then the group $\cC(\cT)$ can be 
equivalently described as the group of finitely additive measures of 
total mass zero on the space $\partial \cT$ of ends of the tree by 
setting $m(U(e))=\mu(e)$, where $U(e)\subset \partial \cT$ is the 
clopen set of ends of the infinite half lines starting at the vertex 
$s(e)$ along the direction $e$. For $G=\cT/\Gamma$, 
the group of currents $\cC(G)=\cC(\cT)^\Gamma$ can be identified with
the group of $\Gamma$-invariant measures on $\partial \cT$, 
\ie finitely additive measures of total mass zero on 
$\partial \cT/\Gamma$.

As above, we let $X=X_\Gamma$ be a Mumford curve, uniformized 
by the $p$-adic Schottky group $\Gamma$. We consider the above applied to
the tree $\Delta_{\mathbb K}$ with the action of the Schottky group $\Gamma$ and 
the infinite, locally finite quotient graph $\Delta_{\mathbb K}/\Gamma$ with 
$\partial\Delta_{\mathbb K}/\Gamma =X_\Gamma ({\mathbb K})$. 

It is known (see \cite{vdP}, Lemma 6.3 and Theorem 6.4) 
that the Jacobian of a Mumford curve can 
be described, as an analytic variety, via the isomorphism
\begin{equation}\label{PicX}
{\rm Pic}^0(X) \cong \Hom(\Gamma,{\mathbb K}^*)/c(\Gamma_{ab}),
\end{equation}
where $\Gamma_{ab}=\Gamma/[\Gamma,\Gamma]$ denotes the abelianization, 
$\Gamma_{ab}\cong \Z^g$, with $g$ the genus, and the homomorphism
\begin{equation}\label{homc}
c: \Gamma_{ab} \to \Hom(\Gamma_{ab},{\mathbb K}^*)
\end{equation}
is defined by the first map in the homology exact sequence
\begin{equation}\label{Hseq}
0 \to \cC(\Delta_{\mathbb K})^\Gamma \stackrel{c}{\to} \Hom(\Gamma, {\mathbb K}^*)\to 
H^1(\Gamma, \cO(\Omega_\Gamma)^*) \to H^1(\Gamma, \cC(\Delta_{\mathbb K})) \to 0, 
\end{equation}
associated to the short exact sequence
\begin{equation}\label{Oseq}
0 \to {\mathbb K}^* \to \cO(\Omega_\Gamma)^* \to \cC(\Delta_{\mathbb K}) \to 0
\end{equation}
of Theorem 2.1 of \cite{vdP}, where $\cO(\Omega_\Gamma)^*$ is the 
group of invertible holomorphic functions on $\Omega_\Gamma\subset \P^1$. 

In the sequence \eqref{Hseq}, one uses the fact that 
$H^i(\Gamma)=0$ for $i\geq 2$ and the identification
\begin{equation}\label{currGammaAB}
 \cC(\Delta_{\mathbb K})^\Gamma =H^0(\Gamma,\cC(\Delta_{\mathbb K}))=\Gamma_{ab}
=\pi_1(\Delta_{\mathbb K}/\Gamma)_{ab}, 
\end{equation}
see \cite{vdP}, Lemma 6.1 and Lemma 6.3. One can then use the 
short exact sequence
\begin{equation}\label{shortexact}
0 \to \cC(\Delta_{\mathbb K})\to \cA(\Delta_{\mathbb K})\stackrel{d}{\to} \cH(\Delta_{\mathbb K})\to 0, 
\end{equation}
where $\cC(\Delta_{\mathbb K})$ is the group of currents on the 
Bruhat--Tits tree $\Delta_{\mathbb K}$, $\cA(\Delta_{\mathbb K})$ is the group 
of integer values functions on the set of edges of $\Delta_{\mathbb K}$ 
satisfying $h(\bar e)=-h(e)$ under orientation reversal 
$e\mapsto \bar e$ and $\cH(\Delta_{\mathbb K})$ is the group of integer 
valued functions on the set of vertices of $\Delta_{\mathbb K}$. The map 
$d$ in \eqref{shortexact} is given by 
\begin{equation}\label{dmap}
d:\cA(\Delta_{\mathbb K})\to \cH(\Delta_{\mathbb K}), \ \ \ \ d(h)(v)=\sum_{s(e)=v} h(e).
\end{equation}
The long exact homology sequence associated to \eqref{shortexact} is given by 
\begin{equation}\label{longexact}
0 \to \cC(\Delta_{\mathbb K}/\Gamma)\to \cA(\Delta_{\mathbb K}/\Gamma)
\stackrel{d}{\to} \cH(\Delta_{\mathbb K}/\Gamma)\stackrel{\Phi}{\to} 
H^1(\Gamma,\cC(\Delta_{\mathbb K}))\to 0, 
\end{equation}
where one has $H^1(\Gamma,\cC(\Delta_{\mathbb K}))\cong \Z$ and, under this 
identification, the last map in the exact sequence is given by
\begin{equation}\label{mapPhi}
 \Phi:\cH(\Delta_{\mathbb K}/\Gamma)\to 
H^1(\Gamma,\cC(\Delta_{\mathbb K}))=\Z, \ \ \  
\Phi(f)=\sum_{v\in (\Delta_{\mathbb K}/\Gamma)^0} f(v). 
\end{equation}
Moreover, one has an identification
$$ H^1(\Gamma, \cO(\Omega_\Gamma)^*)=H^1(X,\cO_X^*) ={\rm Pic}(X), $$
the group of equivalence classes of holomorphic (hence by GAGA algebraic) 
line bundles on the curve $X$, and the last map in the exact 
sequence \eqref{Hseq} is then given by the degree map 
$\deg: {\rm Pic}(X) \to \Z$, whose kernel is the Jacobian 
$J(X)={\rm Pic}^0(X)$, see \cite{vdP} Lemma 6.3. 

A theta function for the Mumford curve $X=X_\Gamma$ is an invertible 
holomorphic function $f\in \cO(\Omega_\Gamma)^*$ such that
$$ \gamma^* f = c(\gamma) f, \ \ \ \forall \gamma \in \Gamma, $$
with $c\in \Hom(\Gamma,{\mathbb K}^*)$ the automorphic factor.
The group $\Theta(\Gamma)$ of theta functions of the curve $X$ is then 
obtained from the exact sequences \eqref{Oseq} and \eqref{Hseq} as
(\cite{vdP}) 
\begin{equation}\label{ThetaX}
0 \to {\mathbb K}^* \to \Theta(\Gamma) \to \cC(\Delta_{\mathbb K})^\Gamma \to 0.
\end{equation}

More precisely, let $\H_{\mathbb K} =\P^1_{\mathbb K} \smallsetminus \P^1({\mathbb K})$ be
Drinfeld's p-adic upper half plane. It is well known (see for instance
the detailed discussion given in \cite{BouCar} \S I.1 and \S I.2)
that $\H_{\mathbb K}$ is a rigid analytic space endowed with a surjective map 
\begin{equation}\label{HandDelta}
\Lambda: \H_{\mathbb K} \to \Delta_{\mathbb K}
\end{equation}
to the Bruhat--Tits tree $\Delta_{\mathbb K}$ such that, for vertices $v,w\in
\Delta_{\mathbb K}^0$ with $v=s(e)$ and $w=r(e)$, for an edge $e\in \Delta_{\mathbb K}^1$,
the preimages $\Lambda^{-1}(v)$ and $\Lambda^{-1}(w)$ are open subsets of
$\Lambda^{-1}(e)$. The picture of
the relation between $\H_{\mathbb K}$ and $\Delta_{\mathbb K}$ through the map $\Lambda$
is given in Figure \ref{DriFig}.

Given a theta function $f\in \Theta(\Gamma)$, the associated current
$\mu_f\in \cC(\Delta_{\mathbb K})^\Gamma$ obtained as in \eqref{ThetaX} is given
explicitly by the growth of the spectral norm in the Drinfeld upper
half plane when moving along an edge in the Bruhat--Tits tree, that is, 
\begin{equation}\label{mufnorm}
\mu(e)=  \log_q \| f\|_{\Lambda^{-1}(r(e))} - \log_q \| f
\|_{\Lambda^{-1}(s(e))},
\end{equation}
with $q=\#\cO/\m$ and $\|f\|_{\Lambda^{-1}(v)}$ is the spectral norm
$$ \|f\|_{\Lambda^{-1}(v)}=\sup_{z\in \Lambda^{-1}(v)} |f(z)| $$
with $|\cdot|$ the absolute value with $|\pi|=q^{-1}$, with $\pi$ a uniformizer,
that is, $\m=(\pi)$.

The case of function fields over a finite field ${\mathbb F}_q$ of
characteristic $p$ is similar to the $p$-adic case, with 
$\H_{\mathbb K}$ and $\Delta_{\mathbb K}$ the Drinfeld upper half plane and the
Bruhat--Tits tree in characteristic $p$ (see for instance
\cite{GekW}). 

\begin{figure}
\begin{center}
\includegraphics[scale=0.7]{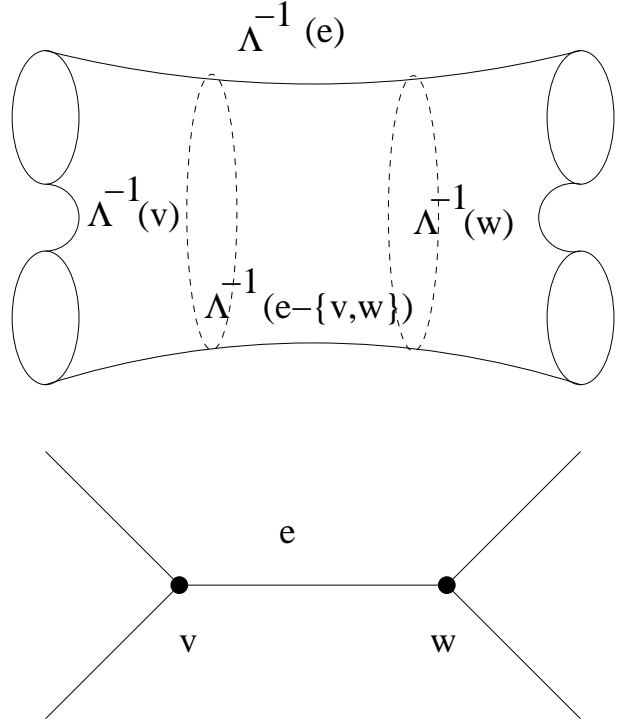}
\caption{The $p$-adic upper half-plane and
the Bruhat--Tits tree
\label{DriFig}}
\end{center}
\end{figure}

\smallskip

\subsection{Graph weights, currents, and theta functions}\label{weightthetasec}

We now show how to relate theta functions on the Mumford curve to
graph weights. The type of graph weights we consider here will in general not
be special graph weights as those we considered in the previous sections.
In fact, we will see that, when constructing currents from graph weights, we
need to work with functions $\lambda(e)$ of the special form $\lambda(e)=N_{\bar e}^{-1}$,
where $N_e$ is defined as in \eqref{Ne} below. Along the outer trees of the
graph $\Delta_{\mathbb K}/\Gamma$, the fact that these are homogeneous trees of
valence $q+1$ (or $q^f +1$ for field extensions) and the orientation of these trees
away from the zhyvot graph $\Delta_\Gamma '/\Gamma$ gives that $\lambda(e)=\lambda^{n_e}$
for $\lambda =q^{-1} \in (0,1)$ (or $\lambda = q^{-f}$ for field extensions) and $n_e=1$, the
expression for $N_e$ inside $\Delta_\Gamma '/\Gamma$ depends on the orientation on
$\Delta_\Gamma '$ described in Lemma \ref{defor}. Similarly, as we see below,
when we construct
(inhomogeneous or signed) graph weights from currents, we use the function
$\lambda(e) = N_e^{-1}$, which is again of the form $q^{-1}$ (or $q^{-f}$) on the 
outer trees of $\Delta_{\mathbb K}/\Gamma$, but which also depends on the given 
orientation inside $\Delta_\Gamma'/\Gamma$. It will be interesting to consider the
quasi-periodic actions associated to these types of graph weights. 

We first show that the same methods that produce graph weights
can be used to construct real valued 
currents on the graph $\Delta_{\mathbb K}/\Gamma$. 

\begin{lemma}\label{weightcurr}
Let $(g,\lambda)$ be a graph weight on the infinite, locally finite graph
$\Delta_{\mathbb K}/\Gamma$. For an oriented edge 
$e\in (\Delta_{\mathbb K}/\Gamma)^1$ let
\begin{equation}\label{Ne}
N_e:=\#\{ e'\in (\Delta_{\mathbb K}/\Gamma)^1\,|\, s(e')=s(e)\}.
\end{equation}
Then the function
\begin{equation}\label{momentumweight}
\mu(e):= \lambda(e) g(r(e))- \frac{1}{N_e} g(s(e))
\end{equation}
satisfies the momentum conservation equation \eqref{curr2}. Moreover, 
if $g:(\Delta_{\mathbb K}/\Gamma)^0 \to [0,\infty)$ is a function on
the vertices of the graph such that $(g,\lambda)$ is a graph weight
for $\lambda(e)=N_{\bar e}^{-1}$, then the function $\mu:
(\Delta_{\mathbb K}/\Gamma)^1\to \R$ given by \eqref{momentumweight}
is a  real valued current on $\Delta_{\mathbb K}/\Gamma$.
\end{lemma}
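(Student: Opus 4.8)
The plan is a direct verification of the two defining properties \eqref{curr1} and \eqref{curr2} of a current straight from the graph weight equation of Definition \ref{grapwdef}; no auxiliary machinery is needed, and one simply regards $\Delta_{\mathbb K}/\Gamma$ as carrying both orientations of each edge, with $\lambda$ and $\mu$ evaluated on all oriented edges.

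First I would establish momentum conservation \eqref{curr2}, which I claim holds for $\mu$ as in \eqref{momentumweight} for \emph{any} graph weight $(g,\lambda)$. Fix a vertex $v\in(\Delta_{\mathbb K}/\Gamma)^0$. Every oriented edge $e$ with $s(e)=v$ has $g(s(e))=g(v)$ and $N_e=N_v:=\#\{e':s(e')=v\}$, so
$$ \sum_{s(e)=v}\mu(e) \;=\; \sum_{s(e)=v}\lambda(e)\,g(r(e)) \;-\; \sum_{s(e)=v}\frac{1}{N_v}\,g(v). $$
The first sum equals $g(v)$ by the defining equation of a graph weight, while the second is a sum of $N_v$ equal terms $g(v)/N_v$ and hence also equals $g(v)$; the difference vanishes, which is \eqref{curr2}. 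This step uses nothing about $\Delta_{\mathbb K}/\Gamma$ beyond local finiteness, which guarantees $N_e<\infty$.

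Next, under the extra hypothesis that $(g,\lambda)$ is a graph weight for the particular choice $\lambda(e)=N_{\bar e}^{-1}$, I would verify orientation reversal \eqref{curr1}. Since $\bar{\bar e}=e$, the hypothesis also reads $\lambda(\bar e)=N_e^{-1}$; substituting this together with $s(\bar e)=r(e)$ and $r(\bar e)=s(e)$ into \eqref{momentumweight} gives
$$ \mu(\bar e) \;=\; \lambda(\bar e)\,g(r(\bar e)) - \frac{1}{N_{\bar e}}\,g(s(\bar e)) \;=\; \frac{1}{N_e}\,g(s(e)) - \frac{1}{N_{\bar e}}\,g(r(e)) \;=\; -\Big(\lambda(e)\,g(r(e)) - \frac{1}{N_e}\,g(s(e))\Big) \;=\; -\mu(e), $$
where the penultimate equality again uses $\lambda(e)=N_{\bar e}^{-1}$. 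Finally $\mu$ is real valued since $g$ takes values in $[0,\infty)\subset\R$ and $\lambda(e),1/N_e\in\R$. Combining the two displays shows $\mu$ is a real valued current on $\Delta_{\mathbb K}/\Gamma$.

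I do not expect a genuine obstacle: the whole argument is bookkeeping. The one place to be slightly careful is the orientation-reversal step, where one must keep straight which of $N_e$ and $N_{\bar e}$ multiplies which vertex value — and it is precisely the relation $\lambda(e)=N_{\bar e}^{-1}$ (equivalently $\lambda(\bar e)=N_e^{-1}$) that makes the two summands of $\mu(e)$ interchange, with a sign, under $e\mapsto\bar e$. It is also worth flagging in the write-up the asymmetry between the two halves: momentum conservation holds for arbitrary $\lambda$, whereas orientation reversal genuinely needs the special form $\lambda(e)=N_{\bar e}^{-1}$.
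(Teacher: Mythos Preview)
Your proof is correct and follows essentially the same route as the paper: both verify momentum conservation directly from the graph weight equation and the count $N_e$, and then check orientation reversal by substituting $\lambda(e)=N_{\bar e}^{-1}$ (equivalently $\lambda(\bar e)=N_e^{-1}$) into \eqref{momentumweight}. The paper additionally records the consequence $\sum_{r(e)=v}\mu(e)=0$, but your more explicit bookkeeping is otherwise identical in content.
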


\proof The first result is a direct consequence of the equation for graph
weights 
\begin{equation}\label{weighteq}
g(v)=\sum_{s(e)=v} \lambda(e) g(r(v)) ,
\end{equation}
which gives the equation \eqref{curr2} for \eqref{momentumweight}.
The second statement also follows from \eqref{weighteq}, where in this case
the resulting function $\mu:
(\Delta_{\mathbb K}/\Gamma)^1\to \R$ given by \eqref{momentumweight}
also satisfies the orientation-reversed equation
\begin{equation}\label{corr2t}
\sum_{r(e)=v} \mu(e) =0.
\end{equation}
In particular, it also satisfies 
$$ \mu(\bar e)=\frac{1}{N_e}  g(s(e))- \frac{1}{N_{\bar e}} g(r(e))
=-\mu(e), $$ hence it 
defines a real valued current on $\Delta_{\mathbb K}/\Gamma$, 
$$ \mu\in \cC(\Delta_{\mathbb K}/\Gamma)\otimes_\Z \R. $$
\endproof

Conversely, one can use theta functions on the Mumford curve to
construct graph weights on the tree, which however do not have
the positivity property. We introduce the following notions
generalizing that of positive graph weight given in Definition
\ref{grapwdef}. 

\begin{defn}\label{virtgrweight}
Given a graph $E$, we define an inhomogeneous graph weight to be a triple
$(g,\lambda,\chi)$ of non-negative functions $g:E^0 \to \R_+=[0,\infty)$,
$\lambda, \chi: E^1\to [0,\infty)$ satisfying
\begin{equation}\label{inhomweight}
g(v)+d\chi(v)=\sum_{s(e)=v} \lambda(e) g(r(e)),
\end{equation}
where, as above, $d\chi(v)=\sum_{s(e)=v} \chi(e)$.
A rational virtual graph weight is a pair $(g,\lambda)$ of functions 
$g:E^0 \to \Q$  and $\lambda:E^1\to \Q_+=\Q\cap [0,\infty)$ such that
there exist rational valued inhomogeneous graph weights
$(g^\pm,\lambda,\chi)$ with $g(v)=g^+(v)-g^-(v)$ for all $v\in E^1$.
\end{defn}

A virtual graph weight satisfies the equation \eqref{weighteq}.
The choice of the inhomogeneous weights $(g^\pm,\lambda,\chi)$
that give the decomposition $g(v)=g^+(v)-g^-(v)$ is non-unique.

\begin{lemma}\label{thetaweights}
Let $f\in \Theta(\Gamma)$ be a theta function for the Mumford curve 
$X=X_\Gamma$, with $\Theta(\Gamma)$ as in \eqref{ThetaX}. 
Then $f$ defines an associated pair of rational valued functions
$(g,\lambda)$ on the tree $\Delta_{\mathbb K}$, with $\lambda(e)=N_e^{-1}$ 
and with $g: \Delta_{\mathbb K}^0 \to \Q$ satisfying the equation 
\eqref{weighteq}.
\end{lemma}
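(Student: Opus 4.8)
The plan is to realize $g$ as a discrete primitive of the $\Gamma$-invariant current $\mu_f\in\cC(\Delta_{\mathbb K})^\Gamma$ attached to $f$ by the exact sequence \eqref{ThetaX}. Since $\Delta_{\mathbb K}$ is a tree, every current on it is a coboundary, so a choice of potential for $\mu_f$ is precisely a function $g$ of the kind we want; the explicit formula \eqref{mufnorm} even hands us one, namely $v\mapsto\log_q\|f\|_{\Lambda^{-1}(v)}$ up to an additive constant.

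First I would fix a base vertex $v_0\in\Delta_{\mathbb K}^0$ and set, for $v\in\Delta_{\mathbb K}^0$, $g(v):=\sum_{i=1}^{n}\mu_f(e_i)$, where $e_1,\dots,e_n$ is the unique geodesic oriented edge-path in $\Delta_{\mathbb K}$ from $v_0$ to $v$; by telescoping \eqref{mufnorm} this equals $g(v)=\log_q\|f\|_{\Lambda^{-1}(v)}-\log_q\|f\|_{\Lambda^{-1}(v_0)}$. This is well defined because $\Delta_{\mathbb K}$ is a tree, and it takes values in $\Z\subset\Q$ because a current is by definition integer valued. Using \eqref{curr1}, $\mu_f(\bar e)=-\mu_f(e)$, and uniqueness of paths in a tree, one checks the coboundary relation $g(r(e))-g(s(e))=\mu_f(e)$ for every oriented edge $e$. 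Put $\lambda(e):=N_e^{-1}$ with $N_e$ as in \eqref{Ne}; since $N_e$ depends only on $v=s(e)$ I will write $N_v$ for it (the out-valence of $v$, positive as our graphs have no sinks), so $\lambda(e)=N_v^{-1}\in\Q$.

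Next I would read off \eqref{weighteq} from momentum conservation. Summing the coboundary relation over the $N_v$ edges $e$ with $s(e)=v$ and invoking \eqref{curr2} for $\mu_f$ gives $0=\sum_{s(e)=v}\mu_f(e)=\bigl(\sum_{s(e)=v}g(r(e))\bigr)-N_v\,g(v)$, hence $g(v)=N_v^{-1}\sum_{s(e)=v}g(r(e))=\sum_{s(e)=v}\lambda(e)\,g(r(e))$, which is exactly \eqref{weighteq}. The same computation shows that replacing $g$ by $g+c$ for a constant $c$ still solves \eqref{weighteq}, because $\sum_{s(e)=v}\lambda(e)=N_v\cdot N_v^{-1}=1$. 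Thus $(g,\lambda)$ is attached to $f$ canonically up to an additive constant on $g$; and since rescaling $f$ by an element of ${\mathbb K}^*$ shifts $\log_q\|f\|$ by an integer constant and leaves $\mu_f$ unchanged, this residual ambiguity is precisely the ${\mathbb K}^*$ in \eqref{ThetaX}.

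The only substantive ingredient, which I would quote rather than reprove, is that the abstractly defined $\mu_f$ — the image of $f$ under the connecting maps of \eqref{Oseq}--\eqref{ThetaX} — coincides with the concrete integer-valued current \eqref{mufnorm} measuring the growth of the spectral norm of $f$ along $\Delta_{\mathbb K}$; this is the content of the results of \cite{vdP} recalled above, and it is what simultaneously yields the integrality of $g$ and its closed form. Beyond this citation I do not expect a real obstacle: the remaining content is just the elementary fact that a current on a tree is the coboundary of an integer-valued vertex function, whose harmonicity (\ie momentum conservation) is literally \eqref{weighteq} with $\lambda(e)=N_e^{-1}$. The point worth flagging in the write-up is that $g$ is in general not $\Gamma$-invariant — its failure to descend to $\Delta_{\mathbb K}/\Gamma$ is governed by the automorphic factor $c\in\Hom(\Gamma,{\mathbb K}^*)$ of $f$ — which is why the statement records $g$ only as a function on $\Delta_{\mathbb K}^0$.
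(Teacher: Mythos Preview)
Your proof is correct and follows essentially the same route as the paper: define $g(v)=\log_q\|f\|_{\Lambda^{-1}(v)}$ (the paper takes this directly, while you first build $g$ as a path-sum of $\mu_f$ and then identify it with this up to an additive constant), observe that $\mu_f(e)=g(r(e))-g(s(e))$, and read off \eqref{weighteq} with $\lambda(e)=N_e^{-1}$ from the momentum-conservation identity $\sum_{s(e)=v}g(r(e))=N_v\,g(v)$. Your remarks on the additive-constant ambiguity and the failure of $\Gamma$-invariance are also exactly what the paper records immediately after this lemma.
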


\proof By \eqref{ThetaX} we know that the theta function $f\in
\Theta(\Gamma)$ determines an associated integer valued current 
$\mu_f$ on the graph $\Delta_{\mathbb K}/\Gamma$, that is, an element in
$\cC(\Delta_{\mathbb K})^\Gamma=\cC(\Delta_{\mathbb K}/\Gamma)$. 
We view $\mu_f$ as a current on the tree $\Delta_{\mathbb K}$ that is
$\Gamma$-invariant. We also know that the current $\mu=\mu_f$ 
is given by the expression \eqref{mufnorm} in terms of the 
spectral norm on the Drinfeld $p$-adic upper half plane. 

If we set 
\begin{equation}\label{gspnormf}
g(v):= \log_q \| f \|_{\Lambda^{-1}(v)},
\end{equation}
We see easily that the condition \eqref{curr2} for the current
$\mu(e)=g(r(e))-g(s(e))$ implies that the function $g: \Delta_{\mathbb K}^0 \to
\Z$ satisfies the weight equation \eqref{weighteq} with
$\lambda(e)=N_e^{-1}$. In fact, we have
$$ \sum_{s(e)=v} g(r(e)) = N_v g(v), $$
with $N_v=\#\{ e': s(e')=v\}=N_e$, for all $e$ with $s(e)=v$.
\endproof

\begin{lemma}\label{lemvirtg}
The function $g: \Delta_{\mathbb K}^0\to \Z$ associated to a theta function in
$f\in \Theta(\Gamma)$ is an integer valued rational virtual graph weight.
\end{lemma}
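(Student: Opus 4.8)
The plan is to take $g^\pm$ to be the positive and negative parts of $g$ and to manufacture a single inhomogeneity $\chi$ that serves both pieces at once. First I would invoke Lemma \ref{thetaweights}: the theta function $f$ produces an integer valued $g:\Delta_{\mathbb K}^0\to\Z$ together with $\lambda(e)=N_e^{-1}$ satisfying the weight equation \eqref{weighteq}, which on the $(q+1)$-regular tree $\Delta_{\mathbb K}$ (so $N_e=q+1$ for every edge) reads $(q+1)\,g(v)=\sum_{s(e)=v}g(r(e))$; by linearity $-g$ satisfies the same identity. Set $g^+(v):=\max(g(v),0)$ and $g^-(v):=\max(-g(v),0)=(-g)^+(v)$. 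These are non-negative, integer valued, and $g=g^+-g^-$, so by Definition \ref{virtgrweight} it remains only to produce a rational $\chi:\Delta_{\mathbb K}^1\to[0,\infty)$ making $(g^\pm,\lambda,\chi)$ into inhomogeneous graph weights, i.e.\ with $g^\pm(v)+d\chi(v)=\sum_{s(e)=v}\lambda(e)g^\pm(r(e))$ for every vertex $v$.

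The heart of the argument is that the ``defect''
\[
C_v:=\sum_{s(e)=v}\lambda(e)\,g^+(r(e))-g^+(v)
\]
is non-negative and symmetric under swapping $g^+$ and $g^-$. For non-negativity I would argue by cases: if $g(v)\ge 0$ then $g^+(v)=g(v)$, and using $g^+(r(e))\ge g(r(e))$ together with \eqref{weighteq} for $g$ we get $C_v\ge\sum_{s(e)=v}\lambda(e)g(r(e))-g(v)=0$; if $g(v)<0$ then $g^+(v)=0$ and $C_v\ge 0$ is immediate. Running the same argument with $-g$ in place of $g$ (so $g^-=(-g)^+$) shows $\sum_{s(e)=v}\lambda(e)g^-(r(e))-g^-(v)\ge 0$ as well. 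Finally, the difference of the two defect expressions equals $\sum_{s(e)=v}\lambda(e)g(r(e))-g(v)$, which vanishes by \eqref{weighteq}; hence both defects equal the same non-negative number $C_v$.

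It then remains to spread $C_v$ over the edges leaving $v$: set $\chi(e):=C_{s(e)}/N_{s(e)}$ for every oriented edge $e$. Since $g$, and hence $g^\pm$, is integer valued and $N_v=q+1$, we have $C_v\in\frac{1}{q+1}\Z$, so $\chi$ is rational and non-negative, with $d\chi(v)=\sum_{s(e)=v}\chi(e)=C_v$. Consequently $g^\pm(v)+d\chi(v)=g^\pm(v)+C_v=\sum_{s(e)=v}\lambda(e)g^\pm(r(e))$, so $(g^+,\lambda,\chi)$ and $(g^-,\lambda,\chi)$ are rational valued inhomogeneous graph weights with $g=g^+-g^-$; together with $\lambda(e)=N_e^{-1}\in\Q_+$ this exhibits $(g,\lambda)$ as a rational virtual graph weight with $g$ integer valued, as claimed. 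The one genuinely non-formal step is the inequality $C_v\ge 0$: it is exactly here that both the order relations $g^+\ge g\ge -g^-$, $g^\pm\ge 0$ and the harmonic-type equation \eqref{weighteq} enter, and it is what singles out $g^\pm=\max(\pm g,0)$ as the correct decomposition; distributing the defect and checking rationality afterwards are routine.
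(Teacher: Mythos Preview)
Your proof is correct and takes a genuinely different route from the paper's. The paper works at the level of the current: it writes $\mu_f=\chi^+-\chi^-$ as a difference of non-negative edge functions with $\chi^\pm(\bar e)=\chi^\mp(e)$ and $d\chi^+=d\chi^-$, and then \emph{integrates} these along the unique path in the tree from a fixed basepoint to define $g^\pm(w)=\sum_j\chi^\pm(e_j)$; the tree structure is essential to make this path integral well defined, and the inhomogeneous equation for $g^\pm$ follows from the defining relation $g^\pm(r(e))-g^\pm(s(e))=\chi^\pm(e)$. You instead decompose $g$ directly into its positive and negative parts $g^\pm=\max(\pm g,0)$ and verify the inhomogeneous equation by a pointwise inequality argument, using only the harmonic identity \eqref{weighteq} and never invoking unique paths. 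Your approach is more elementary and would work on any graph (not just a tree) for any $g$ satisfying \eqref{weighteq}; the paper's construction, on the other hand, ties $g^\pm$ explicitly to a splitting of the current and thus keeps the connection to theta-function data more visible. Both land on a common $\chi$ (yours by construction, the paper's because $d\chi^+=d\chi^-$), which is what Definition~\ref{virtgrweight} requires.
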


\proof The measure $\mu=\mu_f$ can be written
(non-uniquely) as a difference 
\begin{equation}\label{mudiff}
\mu(e) = \chi^+(e) - \chi^-(e),
\end{equation}
with non-negative $\chi^\pm : \Delta_{\mathbb K}^1 \to \N \cup \{ 0 \}$ satisfying
\begin{equation}\label{mupmeqs}
\chi^\pm(\bar e)=\chi^\mp(e) \ \ \  \text{ and } \ \ \
\sum_{s(e)=v} \chi^+(e)=\sum_{s(e)=v} \chi^-(e),
\end{equation}
for all $e\in \Delta_{\mathbb K}^1$. One then considers the equations
\begin{equation}\label{mugpmtree}
g^\pm(r(e)) - g^\pm(s(e))=\chi^\pm(e).
\end{equation}
We first see that \eqref{mugpmtree} determines unique solutions
$g^\pm: \Delta_{\mathbb K}^0\to \Q_+$ with $g^\pm(v)=0$ at the basepoint.
In fact, suppose we are given a vertex $w\neq v$
in the tree. There is a unique path $P(v,w)$ in $\Delta_{\mathbb K}$ connecting
the base vertex $v$ to $w$. It is given by a sequence $P(v,w)= e_1,\ldots, e_n$ 
of oriented edges. Let $v=v_0,\ldots,v_n=w$ be the corresponding
sequence of vertices. The equation \eqref{mugpmtree} implies 
\begin{equation}\label{gpmind}
 g^\pm (w) = \sum_{j=1}^{n} \chi^\pm (e_j) .
\end{equation}
This determines uniquely the values of $g^\pm$ at each vertex in
$\Delta_{\mathbb K}$. The solutions obtained in this way satisfy
$g^+(v)-g^-(v)=g(v)$, where $g(v)=\log_q \| f 
\|_{\Lambda^{-1}(v)}$ as in Lemma
\ref{thetaweights}. 
The $g^\pm$ satisfy by construction the inhomogeneous weight equation
$$ g^\pm(w)+d\chi^\pm(v)=\sum_{s(e)=w} \lambda(e) g^\pm(r(e)), $$
for $\lambda(e)=N_e^{-1}$.
Thus, the pair $(g,\lambda)$ of Lemma
\ref{thetaweights} is a rational virtual weight.
\endproof

Notice that, even though the current $\mu_f$ is $\Gamma$-invariant 
by construction, and the function $\lambda(e)=N_{e}^{-1}$ is also
$\Gamma$-invariant, the function $g: \Delta_{\mathbb K}^0 \to \Q$ obtained
as above is not in general $\Gamma$-invariant, hence it need not
descend to a graph weight on $\Delta_{\mathbb K}/\Gamma$. 

In fact, for $g(v)=\log_q \| f \|_{\Lambda^{-1}(v)}$, one sees that 
$$ g(\gamma v)= \log_q \| f \|_{\Lambda^{-1}(\gamma v)} =
\log_q \| f\circ \gamma \|_{\Lambda^{-1}(v)} = \log_q |c(\gamma)| +
\log_q \| f \|_{\Lambda^{-1}(v)} = g(v) + \log_q |c(\gamma)| , $$
where $f(\gamma z)= c(\gamma) f(z)$.

More generally, one has the following result.

\begin{lemma}\label{noninvalpha}
Let $(g,\lambda)$ be a rational virtual weight on the tree $\Delta_{\mathbb K}$,
with $g: \Delta_{\mathbb K}^0 \to \Q$ and with $\lambda(e)=N_e^{-1}$. 
Then the function $g$ satisfies 
\begin{equation}\label{gbetagamma}
g(\gamma v)-g(v)= d\beta_\gamma (v),
\end{equation}
where $d\beta_\gamma (v)=\sum_{s(e)=v} \beta_\gamma(e)$ and
\begin{equation}\label{betagamma}
\beta_\gamma (e)= \lambda(e) (g(\gamma r(e)) - g(s(e))).
\end{equation}
This satisfies $\beta_\gamma(\bar e)=-\beta_{\gamma^{-1}}(\gamma e)$
and the 1-cocycle equation
\begin{equation}\label{1cocycle}
d\beta_{\gamma_1\gamma_2}(v)= d\beta_{\gamma_1}(\gamma_2 v)+
d\beta_{\gamma_2}(v). 
\end{equation}
\end{lemma}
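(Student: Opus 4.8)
The plan is to deduce the whole lemma from the single identity that $d\beta_\gamma$ is the ``coboundary'' of $g$, i.e. from \eqref{gbetagamma} itself; the remaining two assertions then follow, the antisymmetry of $\beta_\gamma$ by a one-line substitution and the cocycle relation \eqref{1cocycle} by telescoping. Two elementary observations drive the computation. First, since $\Gamma$ acts freely on $\Delta_{\mathbb K}$ by graph automorphisms, every vertex of $\Delta_{\mathbb K}$ has the same valence, so $N_e$ is in fact independent of $e$; write $N$ for this common value. Hence $\lambda(e)=N^{-1}$ for all $e$, and the virtual-weight equation \eqref{weighteq} for $(g,\lambda)$ reads $N\,g(v)=\sum_{s(e)=v}g(r(e))$ at every vertex $v$. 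Second, for $\gamma\in\Gamma$ the map $e\mapsto\gamma e$ is a bijection of $\{e:s(e)=v\}$ onto $\{e':s(e')=\gamma v\}$ satisfying $s(\gamma e)=\gamma s(e)$ and $r(\gamma e)=\gamma r(e)$.

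Granting these, I would compute $d\beta_\gamma(v)$ straight from \eqref{betagamma}: each edge $e$ in the sum has $s(e)=v$, so $g(s(e))=g(v)$ and $\lambda(e)=N^{-1}$, giving
\[
d\beta_\gamma(v)=\frac1N\sum_{s(e)=v}\bigl(g(\gamma r(e))-g(v)\bigr)=\frac1N\sum_{s(e)=v}g(\gamma r(e))-g(v).
\]
Re-indexing the last sum by $e\mapsto\gamma e$ turns it into $\sum_{s(e')=\gamma v}g(r(e'))$, and applying the weight equation \emph{at the vertex $\gamma v$} collapses this to $N\,g(\gamma v)$; hence $d\beta_\gamma(v)=g(\gamma v)-g(v)$, which is \eqref{gbetagamma}. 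The antisymmetry $\beta_\gamma(\bar e)=-\beta_{\gamma^{-1}}(\gamma e)$ is then immediate: expanding the left side via $r(\bar e)=s(e)$, $s(\bar e)=r(e)$ gives $N^{-1}\bigl(g(\gamma s(e))-g(r(e))\bigr)$, while expanding the right side via $r(\gamma e)=\gamma r(e)$, $s(\gamma e)=\gamma s(e)$ gives $N^{-1}\bigl(g(r(e))-g(\gamma s(e))\bigr)$, the negative of the former.

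For the cocycle identity \eqref{1cocycle} I would simply use \eqref{gbetagamma} three times:
\[
d\beta_{\gamma_1\gamma_2}(v)=g(\gamma_1\gamma_2 v)-g(v)=\bigl(g(\gamma_1\gamma_2 v)-g(\gamma_2 v)\bigr)+\bigl(g(\gamma_2 v)-g(v)\bigr)=d\beta_{\gamma_1}(\gamma_2 v)+d\beta_{\gamma_2}(v),
\]
the first bracket being $d\beta_{\gamma_1}$ evaluated at $\gamma_2 v$. The only step needing genuine attention — the ``hard part'', modest as it is — is the re-indexing in the display for $d\beta_\gamma(v)$: one must remember that translating the edge sum by $\gamma$ moves the base point from $v$ to $\gamma v$, so that it is the weight equation at $\gamma v$, not at $v$, that is invoked to recover $g(\gamma v)$. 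No property of theta functions is used; all that is needed is that $g$ solves \eqref{weighteq} with the constant, hence $\Gamma$-invariant, weight $\lambda(e)=N_e^{-1}$, together with $\Gamma$-equivariance of the source and range maps on $\Delta_{\mathbb K}$.
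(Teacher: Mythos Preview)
Your proof is correct and relies on the same two ingredients as the paper's: the weight equation \eqref{weighteq} applied at the translated vertex $\gamma v$, and the $\Gamma$-equivariance of the source and range maps that lets you re-index the edge sum. The paper organizes the computation a little differently: it first introduces an auxiliary function $\alpha_\gamma(e)=\lambda(e)\bigl(g(\gamma r(e))-g(r(e))\bigr)$, uses the weight equation at both $v$ and $\gamma v$ to show $g(\gamma v)-g(v)=d\alpha_\gamma(v)$, and then observes that $\alpha_\gamma=\beta_\gamma-\mu$ for the current $\mu$ of Lemma~\ref{weightcurr}, whence $d\alpha_\gamma=d\beta_\gamma$ since $d\mu\equiv 0$. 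Your direct computation of $d\beta_\gamma$ is slightly cleaner---it uses the weight equation only once (at $\gamma v$), the other term $\tfrac1N\sum_{s(e)=v}g(s(e))=g(v)$ being trivial---while the paper's detour has the merit of tying the identity back to the current $\mu_f$ of the theta function. The verifications of the antisymmetry $\beta_\gamma(\bar e)=-\beta_{\gamma^{-1}}(\gamma e)$ and of the cocycle relation \eqref{1cocycle} are identical in both approaches.
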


\proof First notice that, by the weight equation \eqref{weighteq}, 
the function $g$ satisfies
\begin{equation}\label{galphagamma}
g(\gamma v)-g(v)= d\alpha_\gamma (v),
\end{equation}
where $d\alpha_\gamma (v)=\sum_{s(e)=v} \alpha_\gamma(e)$ with
\begin{equation}\label{alphagamma}
\alpha_\gamma (e)= \lambda(e) \left(\,\,g(\gamma r(e))-g(r(e))\,\,\right).
\end{equation}
Notice moreover that we have
$$ \alpha_\gamma(e)=\beta_\gamma(e) - \mu(e), $$
for $\beta_\gamma$ as in \eqref{betagamma} and $\mu=\mu_f$ the
$\Gamma$-invariant current \eqref{mufnorm}. Since $d\mu(v)\equiv 0$ we
have $d\alpha_\gamma (v)=d\beta_\gamma(v)$, which gives
\eqref{gbetagamma}. One checks the expression for $\beta_\gamma(\bar
e)$ directly from \eqref{betagamma}, using the $\Gamma$-invariance of
$N_e$ and $\lambda(e)$. The 1-cocycle equation is also easily verified
by $$ g(\gamma_1\gamma_2 v) - g(v) - g(\gamma_1\gamma_2 v)+ g(\gamma_2 v) -
g(\gamma_2 v)+ g(v) = 0. $$
\endproof

In general, the condition for a (virtual) graph weight on the tree
$\Delta_{\mathbb K}$ to descend to a (virtual) graph weight on the quotient
$ \Delta_{\mathbb K}/\Gamma$ is that the functions $(g,\lambda)$  satisfy 
\begin{equation}\label{weighteqgamma}
g(v)= \sum_{s(e)=\gamma v} \lambda(e) g(r(e)),
\end{equation}
for all $\gamma \in \Gamma$. This is clearly equivalent to the
vanishing of $d\beta_\gamma(v)$ and to the invariance $g(\gamma
v)=g(v)$. 

\medskip

Another possible way of describing (rational) virtual graph weights,
instead of using the inhomogeneous
equations, is by allowing the function $\lambda$ to have positive or
negative sign, namely we consider $\lambda: E^1 \to \Q$ and look
for non-negative solutions $g: E^0 \to \Q_+$ of the original graph weight 
equation \eqref{weighteq}.

A rational virtual weight $(g,\lambda)$ defines a solution
$(\tilde g, \tilde \lambda)$ as above, with $\tilde \lambda: E^1 \to
\Q$ and $\tilde g: E^0 \to \Q_+$, by setting 
$\lambda$ to be $\tilde\lambda(e) = \lambda(e)
\sign(g(s(e))) \sign(g(r(e)))$ and $\tilde g(v) = \sign(g(v))
g(v)=|g(v)|$. This definition has an ambiguity when $g(v)=0$, in which
case we can take either $\sign(g(v))=\pm 1$.

\subsection{Theta functions and $K$-theory classes}

Another useful observation regarding the relation of theta functions
of the Mumford curve $X_\Gamma$ to properties of the graph algebra of
the infinite graph $\Delta_{\mathbb K}/\Gamma$, is the fact that one
can associate to the theta functions elements in the $K$-theory of the
boundary $C^*$-algebra $C(\partial \Delta_{\mathbb K})\rtimes \Gamma$. 

This is not a new observation: it was described explicitly in
\cite{Rob} and also used in \cite{CLM}, though only in the case of
finite graphs. The finite graph hypothesis is used in \cite{Rob}
to obtain the further identification of the $\Gamma$-invariant
$\Z$-valued currents on the covering tree with the first homology
group of the graph. 

In our setting, the graph $\Delta_{\mathbb K}/\Gamma$ consists of a
finite graph $\Delta'_\Gamma/\Gamma$ together with infinite
trees stemming from its vertices. We still have the same result on the
identification with the $K$-theory group $K_0(C(\partial
\Delta_{\mathbb K})\rtimes \Gamma)$ of the boundary algebra, as well as with the
first homology of the graph $\Delta_{\mathbb K}/\Gamma$, which is the
same as the first homology of the finite graph $\Delta'_\Gamma/\Gamma$. 

\begin{prop}\label{thetaKth}
There are isomorphisms
\begin{equation}\label{currKthH1}
\cC(\Delta_{\mathbb K},\Z)^\Gamma \cong H_1(\Delta_{\mathbb K}/\Gamma,\Z) 
\cong \Hom(K_0(C(\partial \Delta_{\mathbb K})\rtimes \Gamma),\Z).
\end{equation}
\end{prop}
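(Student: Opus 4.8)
The plan is to prove the two isomorphisms in turn, reducing the second to the main result of \cite{Rob}, which treats the cocompact case.

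\emph{The first isomorphism.} Since $\Delta_{\mathbb K}$ is a tree, its geometric realization is contractible, and as $\Gamma$ acts freely on it the quotient map $\Delta_{\mathbb K}\to\Delta_{\mathbb K}/\Gamma$ is a universal cover, so $\pi_1(\Delta_{\mathbb K}/\Gamma)\cong\Gamma$. The Hurewicz theorem then gives $H_1(\Delta_{\mathbb K}/\Gamma,\Z)\cong\pi_1(\Delta_{\mathbb K}/\Gamma)_{ab}=\Gamma_{ab}$. On the other hand \eqref{currGammaAB} (from \cite{vdP}) identifies $\cC(\Delta_{\mathbb K},\Z)^\Gamma$ with the same group $\Gamma_{ab}$, so the first isomorphism follows; both groups are free abelian of rank $g$. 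It is convenient to record here that the inclusion of the finite core $\Delta'_\Gamma/\Gamma\hookrightarrow\Delta_{\mathbb K}/\Gamma$ is a deformation retract (each attached tree contracts onto its root vertex), whence $H_1(\Delta_{\mathbb K}/\Gamma,\Z)\cong H_1(\Delta'_\Gamma/\Gamma,\Z)$, and correspondingly $\cC(\Delta_{\mathbb K},\Z)^\Gamma\cong\cC(\Delta'_\Gamma,\Z)^\Gamma$ (both being $\Gamma_{ab}$).

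\emph{Reduction of the $K$-theory statement.} The group $\Gamma$ acts freely and cocompactly on the core tree $\Delta'_\Gamma$, whose boundary is the limit set $\Lambda_\Gamma=\partial\Delta'_\Gamma$. This is precisely the situation of \cite{Rob}, whose main theorem supplies a natural pairing between $\Gamma$-invariant integral currents and $K_0$ of the boundary crossed product, inducing an isomorphism
\[
\cC(\Delta'_\Gamma,\Z)^\Gamma\ \cong\ \Hom\!\bigl(K_0(C(\Lambda_\Gamma)\rtimes\Gamma),\Z\bigr).
\]
Combined with the first paragraph this already yields the full chain of isomorphisms, once one reads the boundary algebra of the Mumford curve as $C(\Lambda_\Gamma)\rtimes\Gamma$. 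If one instead insists on working with the full boundary $\partial\Delta_{\mathbb K}=\P^1({\mathbb K})$, one must still compare $C(\Lambda_\Gamma)\rtimes\Gamma$ with $C(\P^1({\mathbb K}))\rtimes\Gamma$.

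\emph{Passage to the full boundary and the main obstacle.} The $\Gamma$-equivariant decomposition $\P^1({\mathbb K})=\Omega_\Gamma\sqcup\Lambda_\Gamma$, with $\Omega_\Gamma$ open, yields a short exact sequence
\[
0\to C_0(\Omega_\Gamma)\rtimes\Gamma\to C(\P^1({\mathbb K}))\rtimes\Gamma\to C(\Lambda_\Gamma)\rtimes\Gamma\to 0,
\]
in which the ideal is Morita equivalent to $C(X_\Gamma({\mathbb K}))$, since $\Gamma$ acts freely and properly on the domain of discontinuity $\Omega_\Gamma$ with quotient the space of ${\mathbb K}$-points $X_\Gamma({\mathbb K})$ of the Mumford curve. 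Feeding the associated six-term $K$-theory sequence (note $K_1(C_0(\Omega_\Gamma)\rtimes\Gamma)=0$) into $\Hom(-,\Z)$, and using the explicit form of the pairing of \cite{Rob}, one wants to conclude that the map on $\Hom(K_0(-),\Z)$ induced by the quotient map identifies the two groups of current-functionals. Establishing this — i.e. showing that the $K$-theory arising from the outer trees, equivalently from the domain of discontinuity, is not detected by the pairing with $\Gamma$-invariant currents — is the only genuinely delicate step; here the structure of $X_\Gamma({\mathbb K})$ as a non-archimedean analytic curve and the compatibility of the \cite{Rob} pairing with the excision sequence must be used with care. The homotopy-theoretic identifications and the invocation of \eqref{currGammaAB} in the first two paragraphs, by contrast, are routine.
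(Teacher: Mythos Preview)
Your treatment of the first isomorphism is fine and matches the paper's, which simply cites \eqref{currGammaAB}.

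For the second isomorphism your strategy diverges from the paper's and leaves a genuine gap. You reduce to Robertson's cocompact result for $\Delta'_\Gamma$ and $\Lambda_\Gamma$, and then attempt to pass from $C(\Lambda_\Gamma)\rtimes\Gamma$ to $C(\P^1({\mathbb K}))\rtimes\Gamma$ via the six-term sequence associated with $\P^1({\mathbb K})=\Omega_\Gamma\sqcup\Lambda_\Gamma$. You yourself flag this last step as ``the only genuinely delicate step'' and do not actually carry it out. Concretely, the six-term sequence gives a surjection $K_0(C(\P^1({\mathbb K}))\rtimes\Gamma)\twoheadrightarrow K_0(C(\Lambda_\Gamma)\rtimes\Gamma)$, so applying $\Hom(-,\Z)$ only yields an \emph{injection} in the direction you need; to get an isomorphism you must show that every $\Z$-valued functional on $K_0(C(\P^1({\mathbb K}))\rtimes\Gamma)$ kills the image of $K_0(C(X_\Gamma({\mathbb K})))$. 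Since $K_0(C(X_\Gamma({\mathbb K})))\cong C(X_\Gamma({\mathbb K}),\Z)$ is large, this is not automatic, and you give no argument for it.

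The paper sidesteps this entirely. It works directly with the full boundary $\partial\Delta_{\mathbb K}=\P^1({\mathbb K})$: using that $\partial\Delta_{\mathbb K}$ is totally disconnected (so $K_1(C(\partial\Delta_{\mathbb K}))=0$) and the Pimsner--Voiculescu sequence for crossed products by the free group $\Gamma$, it identifies $K_0(C(\partial\Delta_{\mathbb K})\rtimes\Gamma)$ with the coinvariants $C(\partial\Delta_{\mathbb K},\Z)_\Gamma$. It then constructs an explicit bijection between $\Gamma$-invariant currents $\mu$ and homomorphisms $\phi:C(\partial\Delta_{\mathbb K},\Z)_\Gamma\to\Z$: given $\mu$, set $\phi(f)=\int f\,dm$ where $m(V(e))=\mu(e)$; given $\phi$, set $\mu(e)=\phi(\chi_{V(e)})$ and use the exact sequence \eqref{longexact} and the map $\Phi$ of \eqref{mapPhi} to verify the current axioms. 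No comparison with the limit-set algebra is needed. Your detour through \cite{Rob} and excision is thus both unnecessary and, as written, incomplete.
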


\proof The first isomorphism follows directly from \eqref{currGammaAB}.

To prove the second identification $$\cC(\Delta_{\mathbb K},\Z)^\Gamma \cong \Hom(K_0(C(\partial
\Delta_{\mathbb K})\rtimes \Gamma),\Z),$$ first notice that $\partial
\Delta_{\mathbb K}$ is a totally disconnected compact Hausdorff space, hence $K_1(C(\partial
\Delta_{\mathbb K}))=0$ and in the exact sequence of \cite{PV} for the $K$-theory
of the crossed product by the free group $\Gamma$ one obtains an
isomorphism of $K_0(C(\partial\Delta_{\mathbb K})\rtimes \Gamma)$ with the
coinvariants 
$$ C(\partial\Delta_{\mathbb K},\Z)_\Gamma = C(\partial\Delta_{\mathbb K},\Z)/\{ f\circ\gamma -
f \,|\, f \in C(\partial\Delta_{\mathbb K},\Z)\}, $$
where $C(\partial\Delta_{\mathbb K},\Z)$ is the abelian group of locally
constant $\Z$-valued functions on $\partial\Delta_{\mathbb K}$, \ie finite
linear combinations with integer coefficients of characteristic
functions of clopen subsets. 
We then show that the abelian group $\cC(\Delta_{\mathbb K},\Z)^\Gamma$ of
$\Gamma$-invariant currents on the tree $\Delta_{\mathbb K}$ can be identified
with 
\begin{equation}\label{currHomH1}
\cC(\Delta_{\mathbb K},\Z)^\Gamma \cong \Hom(K_0(C(\partial\Delta_{\mathbb K})\rtimes
\Gamma),\Z) = H_1(\Delta_{\mathbb K}/\Gamma,\Z).
\end{equation}
To see that a current $\mu \in \cC(\Delta_{\mathbb K},\Z)^\Gamma$ defines a
homomorphism $\phi: C(\partial\Delta_{\mathbb K},\Z)_\Gamma \to \Z$, we use the
fact that we can view the current $\mu$ on the tree as a measure $m$ of 
total mass zero on the boudary $\partial\Delta_{\mathbb K}$ by setting
$m(V(e))=\mu(e)$, where $V(e)$ is the subset of the boundary
determines by all infinite paths starting with the oriented edge $e$. 
We then define the functional 
\begin{equation}\label{muphi}
 \phi(f)=\int_{\partial\Delta_{\mathbb K}} f \, dm, 
\end{equation}
where the integration is defined by $\phi(\sum_i a_i \chi_{V(e_i)}) =
\sum_i a_i \mu(e_i)$ on characteristic functions. To see that $\phi$
is defined on the coinvariants it suffices to check that it vanishes
on elements of the form $f\circ \gamma - f$, for some $\gamma\in
\Gamma$. This follows by change of variables and the invariance of the
current $\mu$,
$$ \int f\circ \gamma \, dm = \int f \, dm\circ\gamma^{-1} 
= \int f \, dm. $$
Conversely, suppose we are given a homomorphism $\phi:
C(\partial\Delta_{\mathbb K},\Z)_\Gamma \to \Z$. We define a map $\mu :
\Delta_{\mathbb K}^1 \to \Z$ by setting $\mu(e)=\phi(\chi_{V(e)})$, where
$\chi_{V(e)}$ is the characteristic function of the set $V(e)\subset
\partial\Delta_{\mathbb K}$. We need to show that this defines a
$\Gamma$-invariant current on the tree. We need to check that the equation
$$ \sum_{s(e)=v} \mu(e) =0 $$
and the orientation reversal condition $\mu(\bar e)=-\mu(e)$ is
satisfied. 

Notice that we have, for any given vertex $v\in \Delta_{\mathbb K}^0$, 
$\cup_{s(e)=v} V(e) =\partial \Delta_{\mathbb K}$. If we set
$$ h(v):= \sum_{s(e)=v} \phi(\chi_{V(e)}), $$
we obtain a $\Gamma$-invariant $\Z$-valued function on the set of vertices
$\Delta_{\mathbb K}^0$, \ie a $\Z$-valued function on the vertices
$(\Delta_{\mathbb K}/\Gamma)^0$. In fact, we have 
$$ \phi(f\circ\gamma)=\phi(f) $$
by the assumption that $\phi$ is defined on the coinvariants
$C(\partial\Delta_{\mathbb K},\Z)_\Gamma$, hence
$$ h(\gamma v)= \sum_{s(e)=\gamma v} \phi(\chi_{V(e)})= \sum_{s(e)=
v}\phi(\chi_{V(e)}\circ \gamma) = h(v). $$
Since by construction $h = d\mu$, with $\mu(e)=\phi(\chi_{V(e)})$ and
$d: \cA(\Delta_{\mathbb K}^1/\Gamma)\to
\cH(\Delta_{\mathbb K}^0/\Gamma)$ as in \eqref{longexact}, it is in the
kernel of the map $\Phi$ of \eqref{mapPhi}. This means that
$$ \Phi(h)=\sum_{v\in \Delta_{\mathbb K}^0/\Gamma} h(v) =0, $$
but we know that
$$ h(v)=\sum_{s(e)=v} \phi(\chi_{V(e)})=\phi(\sum_{s(e)=v}
\chi_{V(e)})= \phi(\chi_{\partial \Delta_{\mathbb K}}) $$
so that the condition $\Phi(h)=0$ implies $h(v)=0$ for all $v$, \ie
$\phi(\chi_{\partial \Delta_{\mathbb K}})=0$. This gives
$$ \sum_{s(e)=v} \phi(\chi_{V(e)}) =0 $$
which is the momentum conservation condition for the measure
$\mu$. Moreover, the fact that the measure on
$\partial\Delta_{\mathbb K}$ defined by $\mu(e)=\phi(\chi_{V(e)})$ has total
mass zero also implies that
$$ 0=\phi(\chi_{\partial \Delta_{\mathbb K}}) = \phi(\chi_{V(e)})+
\phi(\chi_{V(\bar e)}), $$
hence $\mu(\bar e)=-\mu(e)$, so that $\mu$ is a current.
The condition $\phi(f\circ\gamma)=\phi(f)$ shows that it is a
$\Gamma$-invariant current.
\endproof

The results of this section relate the theta functions of Mumford curves to the $K$-theory
of a $C^*$-algebra which is not directly the graph algebra $C^*(\Delta_{\mathbb K}/\Gamma)$
we worked with so far, but the ``boundary algebra'' $C(\partial
\Delta_{\mathbb K})\rtimes \Gamma$.  However, it is known by the result of Theorem~1.2 of \cite{KuPa} that
the crossed product algebra $C(\partial \Delta_{\mathbb K})\rtimes \Gamma$ is strongly Morita 
equivalent to the algebra $C^*(\Delta_{\mathbb K}/\Gamma)$.  In fact, we
use the fact that $\Delta_{\mathbb K}$ is a tree and that the $p$-adic Schottky group 
$\Gamma$ acts freely on $\Delta_{\mathbb K}$, so that $C^*(\Delta_{\mathbb K})\rtimes \Gamma\simeq C^*(\Delta_{\mathbb K}/\Gamma) \otimes \cK(\ell^2(\Gamma))$. Thus $C^*(\Delta_{\mathbb K}/\Gamma)$ is strongly Morita
equivalent to $C^*(\Delta_{\mathbb K})\rtimes \Gamma$. Moreover, $\Delta_{\mathbb K}$ is the universal covering tree
of $\Delta_{\mathbb K}/\Gamma$ and $\Gamma$ can be identified with the fundamental group so that
the argument of Theorem 4.13 of \cite{KuPa} holds in this case 
and the result of Theorem~1.2 of  \cite{KuPa} applies.
Thus the $K$-theory considered here can be also thought of as the $K$-theory of the latter algebra
and we obtain the following result.

\begin{cor}\label{ThetaK}
A theta function $f\in \Theta(\Gamma)$ defines a functional 
$\phi_f\in \Hom(K_0(C^*(\Delta_{\mathbb K}/\Gamma)),\Z)$. Two theta functions $f,f'\in \Theta(\Gamma)$ define the same $\phi_f=\phi_{f'}$ if and only if they differ by the action of ${\mathbb K}^*$.
\end{cor}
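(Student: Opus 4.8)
The plan is to assemble the corollary directly from the isomorphisms of Proposition~\ref{thetaKth}, the Morita equivalence recalled immediately before it, and the exact sequence \eqref{ThetaX}. First I would fix the composite isomorphism
$$ \kappa\colon \cC(\Delta_{\mathbb K},\Z)^\Gamma \xrightarrow{\ \sim\ } H_1(\Delta_{\mathbb K}/\Gamma,\Z) \xrightarrow{\ \sim\ } \Hom\big(K_0(C(\partial\Delta_{\mathbb K})\rtimes\Gamma),\Z\big) $$
of Proposition~\ref{thetaKth}, and then precompose the last term with the isomorphism $K_0(C^*(\Delta_{\mathbb K}/\Gamma))\cong K_0(C(\partial\Delta_{\mathbb K})\rtimes\Gamma)$ induced by the strong Morita equivalence $C^*(\Delta_{\mathbb K})\rtimes\Gamma\simeq C^*(\Delta_{\mathbb K}/\Gamma)\otimes\cK(\ell^2(\Gamma))$ together with Theorem~1.2 of \cite{KuPa}. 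This produces a canonical identification $\Hom(K_0(C^*(\Delta_{\mathbb K}/\Gamma)),\Z)\cong \cC(\Delta_{\mathbb K},\Z)^\Gamma$. Now, given $f\in\Theta(\Gamma)$, I would let $\mu_f\in\cC(\Delta_{\mathbb K})^\Gamma$ be the $\Gamma$-invariant integer current attached to $f$ by \eqref{ThetaX}, described concretely in \eqref{mufnorm}; since $\mu_f$ is $\Z$-valued it lies in $\cC(\Delta_{\mathbb K},\Z)^\Gamma$, and I define $\phi_f$ to be its image under the above identification. This is a well-defined element of $\Hom(K_0(C^*(\Delta_{\mathbb K}/\Gamma)),\Z)$, which is the first assertion.

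For the second assertion, the key point is that every arrow in the chain defining $\kappa$ and its Morita extension is an \emph{isomorphism}, hence injective. Therefore $\phi_f=\phi_{f'}$ holds if and only if $\mu_f=\mu_{f'}$, i.e.\ if and only if $f$ and $f'$ have the same image under the surjection $\Theta(\Gamma)\to\cC(\Delta_{\mathbb K})^\Gamma$ of \eqref{ThetaX}. Equivalently, $f\cdot(f')^{-1}$ lies in the kernel of that surjection, which by exactness of \eqref{ThetaX} is exactly ${\mathbb K}^*\subset\Theta(\Gamma)$; that is, $f'=c\,f$ for some $c\in{\mathbb K}^*$. One can also verify the ``only if'' direction by hand from \eqref{mufnorm}: scaling $f$ by $c\in{\mathbb K}^*$ adds the constant $\log_q|c|$ to $\log_q\|f\|_{\Lambda^{-1}(v)}$ at every vertex $v$, and this constant cancels in the difference defining $\mu_f(e)$, so $\mu_{cf}=\mu_f$.

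The substantive inputs — the identification of $\Gamma$-invariant currents with $\Hom(K_0(-),\Z)$ of the boundary crossed product, and the transport of this statement across the Morita equivalence to $C^*(\Delta_{\mathbb K}/\Gamma)$ — have already been established in Proposition~\ref{thetaKth} and in the discussion preceding the corollary, so nothing new of that nature needs to be proved here. The only point requiring genuine care is bookkeeping: one must check that the isomorphism $K_0(C^*(\Delta_{\mathbb K}/\Gamma))\cong K_0(C(\partial\Delta_{\mathbb K})\rtimes\Gamma)$ used above is precisely the one furnished by the imprimitivity bimodule of \cite{KuPa}, so that the resulting functional $\phi_f$ is canonical and independent of auxiliary choices. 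I expect this compatibility verification, rather than any estimate or construction, to be the main (and only mild) obstacle.
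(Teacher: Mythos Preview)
Your proposal is correct and follows essentially the same route as the paper: invoke Proposition~\ref{thetaKth} for the identification $\cC(\Delta_{\mathbb K},\Z)^\Gamma \cong \Hom(K_0(C(\partial\Delta_{\mathbb K})\rtimes\Gamma),\Z)$, transport across the Morita equivalence from \cite{KuPa} to replace the boundary algebra by $C^*(\Delta_{\mathbb K}/\Gamma)$, and then read off the second assertion from exactness of \eqref{ThetaX}. One small slip: the hand computation you append at the end (that $\mu_{cf}=\mu_f$ for $c\in{\mathbb K}^*$) is the ``if'' direction, not the ``only if'' direction as you label it; both directions are already covered by your exactness argument, so this is harmless.
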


\proof The first statement follows from Proposition \ref{thetaKth} and the 
identification
$K_0(C^*(\Delta_{\mathbb K}/\Gamma))=K_0(C(\partial\Delta_{\mathbb K})\rtimes \Gamma)$ 
which follows from the strong Morita equivalence discussed above. The 
second statement is then a direct consequence of
Proposition \ref{thetaKth} and \eqref{ThetaX}.
\endproof

Corollary \ref{ThetaK} shows that there is a close relationship between the
$K$-homology of $C^*(\Delta_{\mathbb K}/\Gamma)$ and theta functions. In the
next section we make a first step towards constructing theta functions from
graphical data and spectral flows.

\section{Inhomogenous graph weights equation and the spectral flow}

Let $E$ be a graph with zhyvot $M$ with a 
choice of (not necessarily special) graph weight $(g,\lambda)$ adapted to the zhyvot 
action. We would like to construct an inhomogenous graph weight $(G,\lambda,\chi)$ 
from these data.

The motivation for the construction is as follows. In the case of a special graph weight, 
the spectral flow only sees edges 
and paths in the zhyvot $M$ of $E$. Consequently
$$
-\sum_{s(e)=v}sf_{\phi_\D}(S_eS_e^*\D,S_e\D S_e^*)=
\sum_{\substack{s(e)=v\\ e\in M}}
\lambda(e) g(r(e))\leq g(v).
$$
Thus in some sense the spectral flow is trying to reproduce the graph weight, but
misses information from edges not in $M$. Alternatively, one may think of 
restricting $(g,\lambda)$ to the zhyvot and asking whether it is still a (special)
graph weight. This is usually not the case, for exactly the same reason.  

So to obtain $(G,\lambda,\chi)$ we begin with the ansatz that $\lambda$ is the 
function given to us with our graph weight and that
$$
G(v)=g(v)-\alpha(v),
$$
so that we require $\alpha(v)\leq g(v)$ for all $v\in E$. We now compute
\begin{align*} 
\sum_{s(e)=v}\lambda(e)G(r(e)) &= g(v)-\sum_{s(e)=v} \lambda(e)\alpha(r(e))\\
&=g(v)-\alpha(v)+\alpha(v)-\sum_{s(e)=v} \lambda(e)\alpha(r(e))\\
&=G(v)+\sum_{s(e)=v}\left(\frac{1}{N_e}\alpha(v)-\lambda(e)\alpha(r(e))\right).
\end{align*}
Hence to obtain an inhomogenous graph weight, we must have
$$
\chi(e)=\frac{1}{N_e}\alpha(s(e))-\lambda(e)\alpha(r(e)).
$$
Here are some possible choices of $\alpha$.

1) $\alpha=g$. This forces $G=0$ and so we obtain an inhomogenous graph
weight only when $\frac{1}{N_e}=\lambda(e)$.

2) $\alpha=c\,g$, $0<c<1$. Now $G\neq 0$ but we still need 
$\frac{1}{N_e}=\lambda(e)$ in order to obtain an inhomogenous graph weight.

3) $\alpha(v)=\sum_{\substack{s(e)=v\\ e\in M}}\lambda(e)g(r(e))
=-\sum_{s(e)=v}sf_{\phi_\D}(S_eS_e^*\D,S_e\D S_e^*)$. In this case we obtain 
$$
\chi(e)=\frac{1}{N_e}\sum_{\substack{s(f)=s(e)\\ f\in M}} \lambda(f)g(r(f))-\lambda(e)
\sum_{\substack{s(h)=r(e)\\ h\in M}} \lambda(h)g(r(h)).
$$
This {\bf may} be non-negative for certain values of $\lambda$.

4) $\alpha(v)=\left\{\begin{array}{ll} g(v) & v\in M\\ 0 & v\not\in M\end{array}\right.$
This gives
$$
\chi(e)=\left\{\begin{array}{ll} \frac{1}{N_e}g(s(e))-\lambda(e)g(r(e)) & s(e),\,r(e)\in M\\
\frac{1}{N_e}g(s(e)) & s(e)\in M,\, r((e)\not\in M\\
0 & s(e),\,r(e)\not\in M \end{array}\right.
$$
Thus $\chi$ is non-negative provided
$$
\frac{1}{N_e}g(s(e))\geq \lambda(e)g(r(e)).
$$

The choices 3) and 4) both yield triples $(G,\lambda,\chi)$ satisfying Equation
\eqref{inhomweight}, and all that remains to understand is the positivity of the 
function $\chi$.

In fact it is easy to construct examples where 4) fails to give a 
non-negative function
$\chi$. However, 3) is more subtle. At present we have no way of 
deciding whether we can 
always find a $g$ so that the function $\alpha$ in 3) is non-negative for $\lambda$ 
associated to the zhyvot action. It would seem that passing to field extensions allows us
to construct graph weights adapted to the (new) zhyvot so that both 3) and 4) fail. The 
reason is we may increase $N_e$ while keeping $\lambda(e)$ constant. 

We describe here another construction of inhomogenous graph weights adapted 
to the zhyvot action. This uses special graph weights, with $\lambda(e)\neq 1$ 
only on the edges inside the zhyvot, so it does not apply to the construction of
theta functions, where one needs $\lambda(e)=N_e^{-1}$ (which is $q^{-1}\neq 1$
outside of the zhyvot), but we include it here for its independent interest.

\begin{lemma}\label{lem:bigger-alpha} 
Let $(g,\lambda)$ be a graph weight on the graph $E$ with zhyvot $M$
such that $\lambda(e)\neq 1$ iff $e\in M^1$. 
With the notation of Section \ref{sec:mod-ind}, define $\alpha_k:E^0\to [0,\infty)$ by
$$
\alpha_k(v)=\phi_\D(p_v\Phi_k),\ \ \ k=0,1,2,\dots.
$$
Then $\alpha_{k-1}(v)\geq \alpha_k(v)$ for all $v\in E^0$.
\end{lemma}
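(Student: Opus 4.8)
The plan is to compute $\alpha_k(v) = \phi_\D(p_v\Phi_k)$ explicitly using the spectral projection formulas from Proposition \ref{pr:spec-projs} and the trace formula $\mbox{Tr}_{\phi_{g,\lambda}}(\Theta_{x,y}) = \phi_{g,\lambda}((y|x))$ together with the defining relation $\phi_\D(T) = \mbox{Tr}_{\phi_{g,\lambda}}(\lambda^\D T)$. For $k \geq 1$ we have $\Phi_k = \sum_{|\mu|_\s = k,\ s(\mu),r(\mu)\in M} \Theta_{S_\mu,S_\mu}$, so $p_v\Phi_k = \sum_{|\mu|_\s=k,\ s(\mu)=v,\ r(\mu)\in M}\Theta_{S_\mu,S_\mu}$ (using $p_v S_\mu = S_\mu$ iff $s(\mu)=v$). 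Since $\D S_\mu = k S_\mu$ on this range, $\lambda^\D$ contributes a factor $\lambda^k$, giving
$$ \alpha_k(v) = \lambda^k \sum_{\substack{|\mu|_\s=k,\ s(\mu)=v\\ r(\mu)\in M}} \phi_{g,\lambda}((S_\mu|S_\mu)) = \lambda^k \sum_{\substack{|\mu|_\s=k,\ s(\mu)=v\\ r(\mu)\in M}} \phi_{g,\lambda}(S_\mu^* S_\mu) = \sum_{\substack{|\mu|_\s=k,\ s(\mu)=v\\ r(\mu)\in M}} \lambda(\mu) g(r(\mu)), $$
where I have used $\lambda^k = \lambda(\mu)$ when $|\mu|_\s = k$ and $n_e = 1$ exactly on $M^1$, and $\phi_{g,\lambda}(S_\mu^* S_\mu) = g(r(\mu))$.

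Next I would relate $\alpha_k$ to $\alpha_{k-1}$ by extending each path $\mu$ of $\s$-length $k-1$ by one more edge. The subtlety, already flagged in the text after the definition of $G_k$, is that a path $\mu$ with $|\mu|_\s = k-1$ and $r(\mu) \in M$ might reach a sink of $M$, in which case it cannot be extended within $M$ but still contributes to $\alpha_{k-1}$. So I would split the sum defining $\alpha_{k-1}(v)$ according to whether $r(\mu)$ is a sink of $M$. For the non-sink terms, the Cuntz--Krieger relation on $M$ gives $p_{r(\mu)} = \sum_{s(e)=r(\mu),\ e\in M^1} S_e S_e^*$, hence $g(r(\mu)) = \sum_{s(e)=r(\mu),\ e\in M^1}\lambda(e) g(r(e))$ by the graph weight equation restricted to edges in $M$ (note $\lambda(e)=\lambda$ for $e\in M^1$), so that $\lambda(\mu) g(r(\mu)) = \sum_{s(e)=r(\mu),\ e\in M^1}\lambda(\mu e) g(r(\mu e))$ and $\mu e$ has $\s$-length $k$. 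This shows the non-sink part of $\alpha_{k-1}(v)$ equals $\alpha_k(v)$, while the sink part is a sum of non-negative terms $\lambda(\mu)g(r(\mu))$ (using $g \geq 0$ and $\lambda(\mu) \geq 0$). Therefore $\alpha_{k-1}(v) = \alpha_k(v) + (\text{non-negative sink contribution}) \geq \alpha_k(v)$.

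The main obstacle I anticipate is bookkeeping around the definition of $\phi_\D$ on these finite-rank endomorphisms and making sure the $\lambda^\D$ factor is handled correctly — in particular confirming that $p_v\Phi_k$ is in the domain of $\phi_\D$ (which is exactly the content of the last sentence of Proposition \ref{pr:spec-projs}) and that the trace of $\lambda^\D \Theta_{S_\mu,S_\mu}$ is genuinely $\lambda^{|\mu|_\s}\phi_{g,\lambda}(S_\mu^*S_\mu)$ rather than something involving $\lambda$ evaluated along $\mu$ in a way that disagrees with $\lambda(\mu)$ on the part of $\mu$ lying outside $M$. Since for the graphs at hand $|\mu|_\s$ counts precisely the $M$-edges and $\lambda(e)=1$ off $M$, these coincide, but I would state this carefully. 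Everything else is the straightforward Cuntz--Krieger bookkeeping above, essentially a special case of the computation already carried out in Lemma \ref{lem:trace-comp} for a single path, now summed over extensions; one could alternatively phrase the whole argument as: $\alpha_{k-1}(v) - \alpha_k(v) = \sum_{\mu} \lambda(\mu) g(r(\mu))$ with the sum over $|\mu|_\s = k-1$, $s(\mu)=v$, and $r(\mu)$ a sink of $M$, which is manifestly $\geq 0$.
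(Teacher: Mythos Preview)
Your overall strategy---computing $\alpha_k(v)$ explicitly as $\sum_{|\mu|_\s=k,\,s(\mu)=v,\,r(\mu)\in M}\lambda(\mu)g(r(\mu))$ and then comparing via a one-edge extension---is the same as the paper's, though you work combinatorially with $g$ and $\lambda$ while the paper stays on the operator-algebraic side. However, there is a genuine error in your key step.

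You claim that for $r(\mu)\in M$ not a sink of $M$, ``the Cuntz--Krieger relation on $M$ gives $p_{r(\mu)}=\sum_{s(e)=r(\mu),\,e\in M^1}S_eS_e^*$, hence $g(r(\mu))=\sum_{s(e)=r(\mu),\,e\in M^1}\lambda(e)g(r(e))$ by the graph weight equation restricted to edges in $M$.'' Both statements are false in the setting at hand: you are working in $C^*(E)$, not $C^*(M)$, and the graph weight $(g,\lambda)$ is a weight on $E$, not on $M$. The Cuntz--Krieger relation in $C^*(E)$ is $p_{r(\mu)}=\sum_{s(e)=r(\mu),\,e\in E^1}S_eS_e^*$, and the graph weight equation reads $g(r(\mu))=\sum_{s(e)=r(\mu),\,e\in E^1}\lambda(e)g(r(e))$, summing over \emph{all} outgoing edges. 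In the Mumford curve examples every vertex of $M$ emits edges outside $M$ (into the attached trees), so the discrepancy is never vacuous. Consequently your assertion that the non-sink part of $\alpha_{k-1}(v)$ \emph{equals} $\alpha_k(v)$, and your alternative identity $\alpha_{k-1}(v)-\alpha_k(v)=\sum_{r(\mu)\text{ sink of }M}\lambda(\mu)g(r(\mu))$, are both incorrect.

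The fix is immediate and is precisely what the paper does: since the omitted terms $\sum_{s(e)=r(\mu),\,e\notin M^1}\lambda(e)g(r(e))$ are non-negative, the full graph weight equation gives the \emph{inequality}
\[
g(r(\mu))\ \geq\ \sum_{\substack{s(e)=r(\mu)\\ e\in M^1}}\lambda(e)\,g(r(e)),
\]
and that is all you need. The paper organizes the same inequality in reverse: it starts from $\alpha_k(v)$, writes each contributing path as $\mu e$ with $|\mu|_\s=k-1$ and $e\in M^1$, then enlarges the sum to $e\in E^1$ (adding non-negative terms), and collapses back to $\alpha_{k-1}(v)$ via the KMS relation and the Cuntz--Krieger identity on $E$. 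Your sink/non-sink split becomes unnecessary once you use the inequality rather than a false equality.
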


\begin{proof}
We first observe that for $k\geq 1$, 
$$
\Phi_k=\sum_{|\mu|_\sigma=k}
\Theta_{S_\mu,S_\mu}.
$$ 
This 
follows easily by induction from $\Phi_0=\sum_{v\in E^0}\Theta_{p_v,p_v}$ and the definition 
of the zhyvot action. Then
\begin{align*}
\phi_\D(p_v\Phi_k)&=\sum_{e\in M^1,|\mu|_\sigma=k-1,s(\mu)=v}\lambda(\mu)\lambda(e)
Trace_\phi(\Theta_{S_\mu S_e, S_\mu S_e})\\
&=\sum_{e\in M^1,|\mu|_\sigma=k-1,s(\mu)=v}\lambda(\mu)\lambda(e)
\phi(S_e^*S_\mu^*S_\mu S_e)\\
&\leq \sum_{e\in E^1,|\mu|_\sigma=k-1,s(\mu)=v}\lambda(\mu)\lambda(e)
\phi(S_e^*S_\mu^*S_\mu S_e)\\
&= \sum_{e\in E^1,|\mu|_\sigma=k-1,s(\mu)=v}\lambda(\mu)
\phi(S_eS_e^*S_\mu^*S_\mu )\\
&= \sum_{|\mu|_\sigma=k-1,s(\mu)=v}\lambda(\mu)
\phi(p_{r(\mu)}S_\mu^*S_\mu)\\
&=\phi_\D(p_v\Phi_{k-1}).
\end{align*}
\end{proof}

\begin{thm} Let $(g,\lambda)$ be a graph weight on the graph $E$ with zhyvot $M$
such that $\lambda(e)\neq 1$ iff $e\in M^1$. Then for all $k\geq 1$ the triple
$(g-\alpha_k,\lambda,\chi_k)$ is an inhomogenous graph trace, where 
$\chi_k(e)=\lambda(e)(\alpha_{k-1}(r(e))-\alpha_k(r(e)))$.
\end{thm}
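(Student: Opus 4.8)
The plan is to verify directly that the triple $(g-\alpha_k,\lambda,\chi_k)$ satisfies the inhomogeneous weight equation \eqref{inhomweight}, and then to check non-negativity of all three functions. First I would recall the general computation carried out in the motivation paragraph preceding Lemma \ref{lem:bigger-alpha}: for any function $\alpha:E^0\to[0,\infty)$ with $\alpha(v)\leq g(v)$, setting $G=g-\alpha$ one has
$$
\sum_{s(e)=v}\lambda(e)G(r(e))=G(v)+\sum_{s(e)=v}\left(\tfrac{1}{N_e}\alpha(v)-\lambda(e)\alpha(r(e))\right),
$$
so that $(G,\lambda,\chi)$ is an inhomogeneous graph weight exactly when $\chi(e)=\tfrac{1}{N_e}\alpha(s(e))-\lambda(e)\alpha(r(e))$. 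Specializing to $\alpha=\alpha_k$, the claim that $\chi_k(e)=\lambda(e)(\alpha_{k-1}(r(e))-\alpha_k(r(e)))$ works is then equivalent to the identity
$$
\tfrac{1}{N_e}\alpha_k(s(e))=\lambda(e)\,\alpha_{k-1}(r(e))
$$
for every edge $e$. This is where the hypothesis $\lambda(e)\neq 1\iff e\in M^1$ is essential.

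The second step is to prove that identity. I would use the formula $\Phi_k=\sum_{|\mu|_\s=k}\Theta_{S_\mu,S_\mu}$ established at the start of the proof of Lemma \ref{lem:bigger-alpha}, and the trace property $\mathrm{Tr}_{\phi_{g,\lambda}}(\Theta_{x,y})=\phi_{g,\lambda}((y|x))$ together with $\phi_\D(T)=\mathrm{Tr}_{\phi_{g,\lambda}}(\lambda^\D T)$. Expanding a length-$k$ path $\mu$ emitted from $s(e)$ as $\mu=e\mu'$ where $e\in E^1$ with $s(e)=v$ and $\mu'$ has length $k-1$ — but using that $\s$ only sees $M$-edges, so the relevant weighted sum is really over the $M^1$-edges — one gets, just as in the displayed chain of equalities in the proof of Lemma \ref{lem:bigger-alpha},
$$
\alpha_k(v)=\phi_\D(p_v\Phi_k)=\sum_{e\in M^1,\ s(e)=v}\lambda(e)\,\phi_\D(p_{r(e)}\Phi_{k-1})=\sum_{e\in M^1,\ s(e)=v}\lambda(e)\,\alpha_{k-1}(r(e)).
$$
I then need the finer, edge-by-edge version of this: since for $e\notin M^1$ we have $\lambda(e)=1$ and such edges do not contribute to $\s$-length, the contribution of a single $M^1$-edge $e$ with $s(e)=v$ to $\alpha_k(v)$ is $\lambda(e)\alpha_{k-1}(r(e))$, and by the symmetry among the $N_e$ edges leaving $v$ inside the relevant homogeneous tree/vertex structure one extracts $\tfrac{1}{N_e}\alpha_k(v)=\lambda(e)\alpha_{k-1}(r(e))$. (Care is needed here: the clean $1/N_e$ factor uses the convention adopted in Section \ref{sec:mod-ind} that outside $M$ the graph weight restricts to a graph trace with all edges leaving a vertex carrying equal $\lambda$, and one must track exactly which edges leaving $v$ are in $M^1$.) This identity plus the Step-1 reformulation gives $\chi_k(e)=\tfrac{1}{N_e}\alpha_k(s(e))-\lambda(e)\alpha_k(r(e))=\lambda(e)(\alpha_{k-1}(r(e))-\alpha_k(r(e)))$, so \eqref{inhomweight} holds.

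For the final step, non-negativity: $\lambda(e)\geq 0$ always, $\alpha_k(v)=\phi_\D(p_v\Phi_k)\geq 0$ since $\phi_\D$ is a weight and $p_v\Phi_k$ a positive finite-rank operator (by Proposition \ref{pr:spec-projs} and the discussion after), and $\alpha_{k-1}(v)\geq\alpha_k(v)$ is precisely Lemma \ref{lem:bigger-alpha}; hence $\chi_k(e)\geq 0$. It remains to see $g-\alpha_k\geq 0$: this follows because $\alpha_k(v)=\phi_\D(p_v\Phi_k)\leq\sum_{j\geq 0}\phi_\D(p_v\Phi_j)$ in the appropriate sense, or more directly from the monotonicity $\alpha_0\geq\alpha_1\geq\cdots$ together with $\alpha_0(v)=\phi_\D(p_v\Phi_0)=\phi_{g,\lambda}(p_v)=g(v)$ (here $\Phi_0=\sum_w\Theta_{p_w,p_w}$, so $p_v\Phi_0=\Theta_{p_v,p_v}$ and $\phi_\D(\Theta_{p_v,p_v})=\phi_{g,\lambda}((p_v|p_v))=\phi_{g,\lambda}(\Phi(p_v))=g(v)$ since $\lambda^\D$ acts as $1$ in degree $0$). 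Thus $g(v)-\alpha_k(v)=\alpha_0(v)-\alpha_k(v)\geq 0$. Finally, $n\equiv 0$ is not being assumed, so strictly this is an inhomogeneous graph \emph{trace} in the sense that $\lambda$ plays no role in degree — I would double-check the terminology against Definition \ref{virtgrweight} and state the conclusion as: $(g-\alpha_k,\lambda,\chi_k)$ is an inhomogeneous graph weight, which is what Equation \eqref{inhomweight} asks for. The main obstacle I anticipate is the passage in Step 2 from the summed identity $\alpha_k(v)=\sum_{e\in M^1,s(e)=v}\lambda(e)\alpha_{k-1}(r(e))$ to the per-edge identity $\tfrac1{N_e}\alpha_k(v)=\lambda(e)\alpha_{k-1}(r(e))$, which genuinely requires the structural hypothesis on $\lambda$ and the graph-trace normalization outside $M$, and is the one place where a sloppy argument would break.
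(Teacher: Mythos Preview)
Your reduction in Step 1 contains a genuine error that propagates through the whole argument. You write that ``$(G,\lambda,\chi)$ is an inhomogeneous graph weight \emph{exactly when} $\chi(e)=\tfrac{1}{N_e}\alpha(s(e))-\lambda(e)\alpha(r(e))$.'' This is false: Definition \ref{virtgrweight} constrains only $d\chi(v)=\sum_{s(e)=v}\chi(e)$, not $\chi$ edge-by-edge. The displayed formula is one choice of $\chi$ achieving the correct $d\chi$, but any other function with the same $d\chi$ (and $\chi\geq 0$) works equally well. Because of this misreading, you then reduce the theorem to the per-edge identity $\tfrac{1}{N_e}\alpha_k(s(e))=\lambda(e)\alpha_{k-1}(r(e))$, which you correctly flag as the main obstacle --- and for good reason: it is false in general. (Take a vertex $v$ with two $M^1$-edges $e_1,e_2$ to distinct vertices $w_1,w_2$; the identity would force $\lambda(e_1)\alpha_{k-1}(w_1)=\lambda(e_2)\alpha_{k-1}(w_2)$, which has no reason to hold.)

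The paper avoids this trap entirely by working at the level of $d\chi_k$. The key identity is the \emph{summed} relation
\[
\sum_{s(e)=v}\lambda(e)\,\alpha_k(r(e))=\alpha_{k+1}(v),\qquad k\geq 1,
\]
proved by the operator computation $\sum_{s(e)=v}\lambda(e)\phi_\D(S_e^*S_e\Phi_k)=\sum_{s(e)=v}\phi_\D(S_eS_e^*\Phi_{k+1})=\phi_\D(p_v\Phi_{k+1})$, together with the observation that $\alpha_k(r(e))=0$ for $r(e)\notin M^0$ and $k\geq 1$. From this one verifies the inhomogeneous equation directly:
\[
\sum_{s(e)=v}\lambda(e)\bigl(g(r(e))-\alpha_k(r(e))\bigr)
=\bigl(g(v)-\alpha_k(v)\bigr)+\sum_{s(e)=v}\lambda(e)\bigl(\alpha_{k-1}(r(e))-\alpha_k(r(e))\bigr),
\]
so $d\chi_k(v)$ comes out right without ever matching $\chi_k$ to your $\tfrac{1}{N_e}$-formula edgewise. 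You already derived the summed identity in your Step 2, so you were one observation away from the correct proof --- just drop the per-edge claim and use the summed version directly. Your non-negativity arguments (via Lemma \ref{lem:bigger-alpha} and $\alpha_0=g$) are fine as written.
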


\begin{proof}
There are a couple of simple observations here. If $v\in E^0\setminus M^0$,
then $\alpha_k(v)=0$ when $k>0$. This follows from Lemma \ref{lem:bigger-alpha}.
This means that for $k\geq 1$
\begin{align*}
\sum_{s(e)=v}\lambda(e)\alpha_k(r(e))=\sum_{s(e)=v}\lambda(e)\phi_\D(p_{r(e)}\Phi_k)
&=\sum_{s(e)=v}\lambda(e)\phi_\D(S_e^*S_e\Phi_k)\\
&=\sum_{s(e)=v}\phi_\D(S_e\Phi_kS_e^*)\\
&=\sum_{s(e)=v}\phi_\D(S_eS_e^*\Phi_{k+1})\\
&=\phi_\D(p_v\Phi_{k+1})=\alpha_{k+1}(v),
\end{align*}
the last line following from the Cuntz-Krieger relations. Then the inhomogenous graph weight
equation is simple.
\begin{align*}
\sum_{s(e)=v}\lambda(e)(g(r(e))-\alpha_k(r(e)))
&=\sum_{s(e)=v}\lambda(e)(g(r(e))-\alpha_{k-1}(r(e)))+\sum_{s(e)=v}\lambda(e)(\alpha_{k-1}(r(e))
-\alpha_{k}(r(e)))\\
&=(g(v)-\alpha_k(v))+\sum_{s(e)=v}\lambda(e)(\alpha_{k-1}(r(e))-\alpha_{k}(r(e))).
\end{align*}
So the inhomogenous graph weight equation is satisfied if we set 
$\chi_k(e)=\lambda(e)(\alpha_{k-1}(r(e))-\alpha_k(r(e)))$, and both 
$g-\alpha_k,\,\chi_k\geq 0$ by Lemma
\ref{lem:bigger-alpha}.
\end{proof}

These inhomogenous weights are canonically associated to the decompositions 
$F=F_k\oplus G_k$ of the fixed point algebra arising from the zhyvot action.

\subsection{Constructing theta functions from spectral flows}

We show how some of the methods described above that produce inhomogeneous
graph weights with $\lambda(e) =N_e^{-1}$ can be adapted to construct theta functions 
on the Mumford curve.

First notice that, given such a construction of a solutions of the inhomogeneous
graph weights as above, we can produce a rational virtual graph weight
in the following way.

\begin{lemma}\label{2weights}
Suppose we are given two graph weights $g_1$, $g_2$ on the infinite graph 
$\Delta_{\mathbb K}/\Gamma$, both with the same $\lambda(e)=N_e^{-1}$. Suppose we are also given
inhomogeneous graph weights of the form $G_i(v)=g_i(v)-\alpha_i(v)$ as above,
with $0\leq \alpha_i(v)\leq g_i(v)$ at all vertices, and with
$\chi_i(e)= \frac{1}{N_e} (\alpha_i(s(e))-\alpha_i(r(e)))$. Then setting 
$\hat G_i(v)=g_i(v)-\alpha(v)$ with $\alpha(v)=\min\{\alpha_1(v),\alpha_2(v)\}$
gives two solutions of the inhomogeneous graph weight equation with the same
$\chi(e)= \frac{1}{N_e} (\alpha(s(e))-\alpha(r(e)))$ and $\lambda(e)=N_e^{-1}$.
\end{lemma}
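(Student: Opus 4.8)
The plan is to reduce the statement to the elementary identity, already displayed in the discussion of the choices 1)--4) above, relating a graph weight, an arbitrary function on vertices, and the induced inhomogeneous term; everything else is bookkeeping plus one small monotonicity observation.

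First I would record the following general fact. Let $(g,\lambda)$ be any graph weight on $\Delta_{\mathbb K}/\Gamma$ with $\lambda(e)=N_e^{-1}$, and let $\alpha:(\Delta_{\mathbb K}/\Gamma)^0\to\R$ be any function. Put $G(v)=g(v)-\alpha(v)$ and $\chi(e)=\frac{1}{N_e}\big(\alpha(s(e))-\alpha(r(e))\big)$. Then $(G,\lambda,\chi)$ satisfies \eqref{inhomweight}, by a direct computation from the weight equation \eqref{weighteq}:
\begin{align*}
\sum_{s(e)=v}\lambda(e)G(r(e))
&= g(v)-\sum_{s(e)=v}\lambda(e)\alpha(r(e))\\
&= G(v)+\alpha(v)-\sum_{s(e)=v}\lambda(e)\alpha(r(e))\\
&= G(v)+\sum_{s(e)=v}\Big(\tfrac{1}{N_e}\alpha(s(e))-\lambda(e)\alpha(r(e))\Big)\\
&= G(v)+d\chi(v),
\end{align*}
where the third line uses that there are exactly $N_v=N_e$ edges $e$ with $s(e)=v$, so that $\sum_{s(e)=v}\frac{1}{N_e}\alpha(s(e))=\alpha(v)$, together with $\lambda(e)=N_e^{-1}$.

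Next I would apply this fact twice, with $g=g_1$ and with $g=g_2$, in both cases taking the single function $\alpha(v)=\min\{\alpha_1(v),\alpha_2(v)\}$. Since the resulting $\chi$ depends only on $\alpha$ and on the integers $N_e$, and not on which $g_i$ is used, the two triples $(\hat G_1,\lambda,\chi)$ and $(\hat G_2,\lambda,\chi)$ carry the same $\chi(e)=\frac{1}{N_e}(\alpha(s(e))-\alpha(r(e)))$ and the same $\lambda(e)=N_e^{-1}$, and by the display above both solve \eqref{inhomweight}.

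Finally I would verify non-negativity, so that these are genuine inhomogeneous graph weights in the sense of Definition~\ref{virtgrweight}. For the vertex functions this is immediate: $\hat G_i(v)=g_i(v)-\alpha(v)\geq g_i(v)-\alpha_i(v)=G_i(v)\geq 0$, using $\alpha(v)\leq\alpha_i(v)\leq g_i(v)$. The only step that is not purely formal is the positivity of $\chi$, and it is here that the hypothesis that each $(G_i,\lambda,\chi_i)$ is itself an inhomogeneous graph weight is used: $\chi_i\geq 0$ together with $N_e>0$ means precisely that each $\alpha_i$ is non-increasing along every oriented edge, and this property passes to the pointwise minimum. Indeed, fix an edge $e$ with $v=s(e)$, $w=r(e)$; if say $\alpha(v)=\alpha_1(v)$, then $\alpha(v)=\alpha_1(v)\geq\alpha_1(w)\geq\min\{\alpha_1(w),\alpha_2(w)\}=\alpha(w)$, and the symmetric case is identical, so $\chi(e)=\frac{1}{N_e}(\alpha(v)-\alpha(w))\geq 0$. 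I expect this last observation — that the minimum of two edge-monotone functions is edge-monotone — to be the only point requiring any thought; the rest is routine.
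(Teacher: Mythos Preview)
Your argument is correct and follows essentially the same route as the paper: both reduce the claim to the elementary identity that subtracting any function $\alpha$ from a graph weight with $\lambda(e)=N_e^{-1}$ produces a solution of the inhomogeneous equation with $\chi(e)=\frac{1}{N_e}(\alpha(s(e))-\alpha(r(e)))$. Your version is in fact more complete than the paper's, since you also verify the non-negativity of $\hat G_i$ and of $\chi$ (via the edge-monotonicity of the $\alpha_i$ passing to their minimum), whereas the paper's proof records only the equational part and leaves these checks implicit.
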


\proof We have
$$ \hat G_i(v) = g_i(v)-\alpha(v) = \sum_{s(e)=v} \frac{1}{N_e} g_i(r(e)) - \alpha(v) =
\sum_{s(e)=v} \frac{1}{N_e} \hat G_i(r(e)) +
\sum_{s(e)=v} \frac{1}{N_e}(\alpha(r(e)) -\alpha(s(e)) $$
which shows that both $\hat G_i(v)$ are solutions of the inhomogeneous weight
equation 
$$ \hat G_i(v) + d\chi(v) = \sum_{s(e)=v} \frac{1}{N_e} \hat G_i(r(e)) , $$
where $\chi(e)= N_e^{-1} (\alpha(s(e)) -\alpha(r(e)))$. 
\endproof

Thus, whenever we have multiple solutions for the graph weights on $\Delta_{\mathbb K}/\Gamma$
we can construct associated rational virtual graph weights by setting
\begin{equation}\label{ratvirGhat}
\hat G(v) = \hat G_1(v) - \hat G_2(v).
\end{equation}
If the graph weights $g_i$ are rational valued, $g_i: E^0 \to \Q_+$, then the virtual
graph weight $\hat G$ is also rational valued, $\hat G: E^0 \to \Q$.

Moreover, since the graph $\Delta_{\mathbb K}/\Gamma$ has a finite zhyvot with infinite
trees coming out of its vertices, one can obtain a rational virtual graph weight 
that is integer valued. In fact, along the
trees outside the zhyvot $\Delta_\Gamma^\prime/\Gamma$ of $\Delta_{\mathbb K}/\Gamma$,
the condition
$$ \hat G(v) = \sum_{s(e)=v} \frac{1}{N_e} \hat G(r(e)) $$
is satisfied by extending $\hat G(v)$ from the zhyvot by 
$\hat G(r(e))=\hat G(s(e))$ along the trees.  In this case, what remains is 
the finite graph, the zhyvot, which only involves finitely many denominators
for a rational valued $\hat G(v)$, which means that one can obtain an integer
valued solution. We will therefore assume that the rational virtual graph
weights constructed as in \eqref{ratvirGhat} and Lemma \ref{2weights} are
integer valued, $\hat G: E^0 \to \Z$. This in particular includes the cases constructed 
using the spectral flow.

According to the results of \S \ref{weightthetasec} above, we then have the
following result.

\begin{prop}\label{thetafromflow}
Suppose we are given a virtual graph weight $\hat G: E^0(\Delta_{\mathbb K}/\Gamma)\to \Z$ as above and a
homomorphism $c: \Gamma \to {\mathbb K}^*$. Then there is a theta function $f$ on
the Mumford curve $X_\Gamma$ satisfying $\log_q \| f \|_{\Lambda^{-1}(v)} =
\tilde G(v)$ and $f(\gamma z)=c(\gamma)f(z)$, where $\tilde G: E^1(\Delta_{\mathbb K})\to \Z$
is defined as $\tilde G(v)=\hat G(v)$ on a fundamental domain of the action of $\Gamma$
on $\Delta_{\mathbb K}$ and extended to $\Delta_{\mathbb K}$ by $\tilde G(\gamma v) = \log_q |c(\gamma)| 
+ \hat G(v)$.
\end{prop}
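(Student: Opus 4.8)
The plan is to reverse the constructions of \S\ref{weightthetasec}, reading the exact sequence \eqref{ThetaX} from right to left. Given the integer-valued virtual graph weight $\hat G: (\Delta_{\mathbb K}/\Gamma)^0\to\Z$ with $\lambda(e)=N_e^{-1}$, first form the function on edges $\mu(e):=\hat G(r(e))-\hat G(s(e))$. One checks, exactly as in Lemma \ref{thetaweights} and Lemma \ref{lemvirtg} but run backwards, that $\mu(\bar e)=-\mu(e)$ automatically, and that the weight equation \eqref{weighteq} with $\lambda(e)=N_e^{-1}$, namely $\sum_{s(e)=v}\hat G(r(e))=N_v \hat G(v)$, is precisely the momentum conservation equation \eqref{curr2}: $\sum_{s(e)=v}\mu(e)=\sum_{s(e)=v}\hat G(r(e))-N_v\hat G(v)=0$. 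Hence $\mu$ is an integer-valued current, $\mu\in\cC(\Delta_{\mathbb K}/\Gamma,\Z)=\cC(\Delta_{\mathbb K},\Z)^\Gamma$.

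Next, pull $\hat G$ back to the tree. Fix a fundamental domain $D$ for the $\Gamma$-action on $\Delta_{\mathbb K}^0$ and set $\tilde G(v)=\hat G([v])$ for $v\in D$. The homomorphism $c:\Gamma\to{\mathbb K}^*$ provides the twisting: define $\tilde G(\gamma v)=\log_q|c(\gamma)|+\hat G([v])$ for $v\in D$. Because $\mu$ is $\Gamma$-invariant on the tree, the edge-difference function $\tilde\mu(\tilde e):=\tilde G(r(\tilde e))-\tilde G(s(\tilde e))$ descends to $\mu$ on the quotient and is well defined independently of the coset representatives (the $\log_q|c(\gamma)|$ terms cancel across any edge joining two translates of $D$, using additivity of $|c(\cdot)|$); one verifies the cocycle bookkeeping with Lemma \ref{noninvalpha}, which already records that $g(\gamma v)-g(v)=d\beta_\gamma(v)$ with $\beta_\gamma$ built from such a twisting. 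Thus $\tilde\mu$ is a $\Gamma$-invariant current on $\Delta_{\mathbb K}$ representing the same class in $\cC(\Delta_{\mathbb K})^\Gamma$, and $\tilde G$ solves \eqref{weighteq} on the tree.

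Now invoke van der Put's theory, i.e.\ the exact sequence \eqref{ThetaX}
$$ 0 \to {\mathbb K}^* \to \Theta(\Gamma) \to \cC(\Delta_{\mathbb K})^\Gamma \to 0. $$
Surjectivity of $\Theta(\Gamma)\to\cC(\Delta_{\mathbb K})^\Gamma$ gives a theta function $f\in\Theta(\Gamma)$ whose associated current, via \eqref{mufnorm}, is exactly $\mu$. By \eqref{mufnorm} the function $v\mapsto\log_q\|f\|_{\Lambda^{-1}(v)}$ then satisfies $\log_q\|f\|_{\Lambda^{-1}(r(e))}-\log_q\|f\|_{\Lambda^{-1}(s(e))}=\mu(e)=\tilde G(r(e))-\tilde G(s(e))$ for every edge, so $\log_q\|f\|_{\Lambda^{-1}(v)}$ and $\tilde G(v)$ differ by a global constant; since the fiber in \eqref{ThetaX} is exactly the ${\mathbb K}^*$-scaling ambiguity of $f$, we may rescale $f$ by an element of ${\mathbb K}^*$ to arrange $\log_q\|f\|_{\Lambda^{-1}(v)}=\tilde G(v)$ on the nose. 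Finally, the automorphic factor: by construction $f(\gamma z)=c'(\gamma)f(z)$ for some $c'\in\Hom(\Gamma,{\mathbb K}^*)$, and taking $\log_q|\cdot|$ and using $\tilde G(\gamma v)-\tilde G(v)=\log_q|c(\gamma)|$ forces $|c'(\gamma)|=|c(\gamma)|$; since the class in \eqref{PicX} only sees $c$ modulo $c(\Gamma_{ab})$ and rescaling $f$ by ${\mathbb K}^*$ does not change $c'$, one identifies $c'$ with the prescribed $c$ (adjusting $f$ within its ${\mathbb K}^*$-orbit if necessary, which is the remaining freedom).

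The main obstacle is the middle paragraph: checking that the twisted lift $\tilde G$ to the tree genuinely produces a \emph{$\Gamma$-invariant} current whose quotient is $\mu$, and that the bookkeeping of the $\log_q|c(\gamma)|$ shifts is consistent — i.e.\ that the $1$-cocycle $\beta_\gamma$ of Lemma \ref{noninvalpha} is exactly the coboundary forced by $c$. This is where the hypotheses $\lambda(e)=N_e^{-1}$ and the integrality of $\hat G$ (so that no denominators obstruct lifting, and $\mu$ lands in $\cC(\Delta_{\mathbb K},\Z)^\Gamma$ rather than its tensor with $\R$) are essential, and it is the only place where real care beyond routine verification is needed; everything else is an application of \eqref{ThetaX}, \eqref{mufnorm}, and the lemmas of \S\ref{weightthetasec}.
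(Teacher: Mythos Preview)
Your approach is essentially the same as the paper's: form the current $\mu(e)=\hat G(r(e))-\hat G(s(e))$ from the virtual graph weight, observe that the weight equation with $\lambda(e)=N_e^{-1}$ is exactly momentum conservation so $\mu\in\cC(\Delta_{\mathbb K})^\Gamma$, and then invoke the exact sequence \eqref{ThetaX} together with the identification \eqref{mufnorm} to produce $f$. The paper's proof is a two-line sketch of precisely this, so your expanded version (verifying the current axioms, lifting to the tree, and matching the spectral norm up to the ${\mathbb K}^*$-ambiguity) is correct and fills in the details the paper leaves implicit; your final paragraph on pinning down the automorphic factor $c$ exactly is more than the paper attempts, and indeed that identification is somewhat loose in the statement itself.
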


\proof This follows from the identification of the group of theta functions $\Theta(\Gamma)$
with the extension \eqref{ThetaX} of the group $\cC(\Delta_{\mathbb K})^\Gamma$ of currents on
$\Delta_{\mathbb K}/\Gamma$ by ${\mathbb K}^*$, and identifying the current
$\mu(e)= \hat G(r(e)) - \hat G(s(e))$ with $\mu_f(e)=\log_q \| f\|_{\Lambda^{-1}(r(e))}-
\log_q \| f\|_{\Lambda^{-1}(s(e))}$. 
\endproof

The last two results highlight the interest in determining whether non-negative $\alpha$ can be 
found for the function $\lambda(e)=N_e^{-1}$.

\end{document}